\documentclass[11pt]{article}
\usepackage{amsmath}
\usepackage{amsfonts}
\usepackage{amssymb}
\usepackage[english]{babel}
\usepackage{graphicx}
\usepackage{amsthm}
\usepackage{hyperref}
\usepackage{accents}
\usepackage [autostyle, english = american]{csquotes}
\MakeOuterQuote{"}

\numberwithin{equation}{section}
% Theorem Styles
\theoremstyle{plain}
\newtheorem*{theorem*}{Theorem}
\newtheorem*{lemma*}{Lemma}
\newtheorem{theorem}{Theorem}
\newtheorem{lemma}{Lemma}[section]
\newtheorem{corollary}[lemma]{Corollary}

\newtheorem{proposition}[lemma]{Proposition}

% Definition Styles
\theoremstyle{definition}

\newtheorem{remark}[lemma]{Remark}

% Shortcuts

\def\pr{\partial}
  
\def\V{\Vert}

\usepackage{geometry}
\geometry{
	total={170mm,257mm},left=28mm,right=28mm, top=30mm, bottom = 35mm
}
%opening
\begin{document}

\title{On the Effects of Advection and Vortex Stretching}
\author{Tarek Mohamed Elgindi and In-Jee Jeong}
\date{\today}

\maketitle

\begin{abstract}
We prove finite-time singularity formation for De Gregorio's model of the three-dimensional vorticity equation in the class of $L^p\cap C^\alpha(\mathbb{R})$ vorticities for some $\alpha>0$ and $p<\infty$. We also prove finite-time singularity formation from smooth initial data for the Okamoto-Sakajo-Wunsch models in a new range of parameter values. As a consequence, we have finite-time singularity for certain infinite-energy solutions of the surface quasi-geostrophic equation which are $C^\alpha$-regular. One of the difficulties in the models we consider is that there are competing \emph{nonlocal} stabilizing effects (advection) and destabilizing effects (vortex stretching) which are of the same size in terms of scaling. Hence, it is difficult to establish the domination of one effect over the other without having strong control of the solution.  

We conjecture that strong solutions to the De Gregorio model exhibit the following behavior: for each $0<\alpha<1$ there exists an initial $\omega_0\in C^\alpha(\mathbb{R})$ which is compactly supported for which the solution becomes singular in finite-time; on the other hand, solutions to De Gregorio's equation are global whenever $\omega_0\in L^p\cap C^{1}(\mathbb{R})$ for some $p<\infty$. Such a dichotomy seems to be a genuinely non-linear effect which cannot be explained merely by scaling considerations since $C^\alpha$ spaces are scaling subcritical for each $\alpha>0$. 
\end{abstract}

\tableofcontents

\section{Introduction}

\subsection{Synopsis}
In 1985 Constantin, Lax, and Majda introduced a one-dimensional equation which models the phenomenon of vortex-stretching in a three-dimensional incompressible fluid. They established finite-time singularity formation in their model. Following in their footsteps, De Gregorio introduced a one-dimensional equation which models the effect of \emph{both} transport and vortex stretching. De Gregorio, aided by some numerical simulations, then conjectured that his introduction of velocity transport leads to global regularity. Thereafter, Okamoto, Sakajo, and Wunsch introduced a continuum of models which give different "weights" to the vortex stretching and transport terms by introducing a real parameter $a$ in front of the transport term. The full model then became: $$\partial_t\omega+ a \, u\partial_x\omega=\omega\partial_x u$$
$$u=(-\Delta)^{-1/2}\omega,$$ with the Constantin-Lax-Majda and De Gregorio models being the cases $a=0$ and $a=1$ respectively. They conjectured that when the transport term is strictly weaker than the vortex stretching term $(a<1)$, finite time singularities would occur and that there would be global regularity when the transport term is as strong or stronger than the vortex stretching term $(a\geq 1)$. This was previously unconfirmed except in the case $a\leq 0$ where the transport term and vortex stretching term are actually working together toward singularity formation. All of this revolves around the notion of the stabilizing effect of transport and the destabilizing effect of vortex stretching.   In this work, we will prove singularity formation in the range $|a|<a_0$ for some $a_0>0$ from initially smooth profiles. To this end, we first prove the existence of self-similar finite-time singularity formation from smooth data for the Constantin-Lax-Majda equation. Then we use a "soft" continuation argument to prove singularity formation when $|a|$ is small. One of the interesting features of this construction is that the solutions, while becoming singular in finite time, remain uniformly bounded in a norm which is strictly stronger than those predicted by the scaling of the equation and available conservation laws -- a phenomenon which is conjectured to happen when $a=-1$ (\cite{CCF}, \cite{SV}, \cite{Do}). Finally, using merely H\"older continuous self-similar solutions for the Constantin-Lax-Majda equation, we establish singularity formation for De-Gregorio's model (and for all $a\in\mathbb{R}$) in classes of H\"older continuous solutions. Furthermore, for each $\epsilon>0,$ we establish that such self-similar solutions lead to singularities in finite time which keep the velocity field bounded in the $C^{1-\epsilon}$ norm near the singular point though the velocity's $C^1$ norm is becoming infinite. This relates to the $C^\frac{1}{2}$ conjecture for the Cordoba-Cordoba-Fontelos model. Hence, it seems that the De Gregorio model exhibits two types of behavior for strong solutions: first there are "somewhat smooth" solutions for which there is singularity formation in finite time, then there are "very smooth" solutions and it is conjectured that these remain smooth for all time. Such a phenomenon is also neither captured by the scaling of the equation nor conservation of any critical quantity. 

Our interest in these models is two-fold: first, they give a simplified setting to understand the complex interaction between transport and vortex stretching. Second, as the authors have shown in \cite{EJ1}, these models actually serve as the equations for the evolution of scale-invariant solutions to the SQG equation and similar active scalar systems, which are locally well-posed in function spaces containing scale-invariant solutions (with some symmetry assumption on the initial data). 

\subsection{The incompressible Euler equation}

The main difference between the two-dimensional incompressible Euler equation and its three-dimensional counterpart is the presence of the so-called vortex-stretching term in three dimensions. The two-dimensional incompressible Euler equation can be written as: $$\partial_t\omega+u\cdot\nabla\omega=0,$$ $$u=\nabla^\perp (-\Delta)^{-1}\omega,$$ while the three-dimensional Euler equation can be written as: $$\partial_t\omega+u\cdot\nabla\omega=\omega\cdot\nabla u,$$ $$u=\nabla\times(-\Delta)^{-1}\omega.$$ In two dimensions, $\omega$ is a scalar quantity and is advected by a velocity vector field which it induces. In three dimensions, $\omega$ is a vector quantity. The term $u\cdot\nabla\omega$ is a transport term and, since $u$ is divergence free, it cannot cause any growth of the vorticity. The term $\omega\cdot\nabla u$ is called the vortex stretching term and can lead to amplification of the vorticity. As is shown in the classical work of Beale, Kato, and Majda \cite{BKM}, finite-time singularity formation for strong solutions of the incompressible Euler equation happens if and only if the sup-norm of the vorticity becomes infinite.  

The absence of vortex stretching in two dimensions allows the two dimensional Euler equation to enjoy infinitely many coercive conserved quantities, notably all $L^p$ norms of the vorticity for $1\leq p\leq \infty$. The presence of vortex stretching makes the three-dimensional Euler equation unstable with respect to all $L^p$ norms which were conserved in the two dimensional case and leads to point-wise exponential or double-exponential growth of the vorticity in time (see, for example, \cite{KS},\cite{EM1}). Despite the clear destabilizing properties of the vortex stretching term, the global regularity question of the three-dimensional Euler equation is wide open. In our view, there are three principle reasons why the regularity question is difficult:
\begin{enumerate}
\item The quadratic and non-local nature of the vortex stretching term.
\item The presence of \emph{both} the vortex stretching and the transport terms.
\item The 3D Euler equation is a \emph{system} rather than a scalar equation. 
\end{enumerate}
The importance of the second difficulty is discussed in great detail in \cite{HL}. In view of these many difficulties, experts in the field have devised simpler model equations that share some of the above properties for which more can be proven. 

\subsection{A short history of 1D Models}
\subsubsection{The Starting Point: Constantin, Lax, and Majda}
The first attempt to derive a simplified 1D equation which models the effects of vortex stretching seems to be that of Constantin, Lax, and Majda \cite{CLM} where they wished to consider a non-linear evolution equation whose non-linearity is quadratic and non-local just like the vortex stretching term $\omega\cdot\nabla u$. Considering the exact relation between $u$ and $\omega$ given by $u=\nabla\times(-\Delta)^{-1}\omega$, one sees that $\nabla u$ is just a matrix of singular integral operators applied to $\omega$. Hence, one has the relation:
$$\omega\cdot\nabla u=R(\omega)\omega$$ where $R$ is some matrix of singular integrals. To extract a simple model from this, Constantin, Lax, and Majda dropped the transport term, made the 3D Euler system into a one-dimensional scalar equation, and replaced $R$ by the Hilbert transform. This led to:
$$\partial_t\omega=\omega H(\omega)$$ where $H$ is the Hilbert transform. This equation can then be solved \emph{exactly} using some special properties of the Hilbert transform -- namely, the Tricomi identity (see Appendix A.1).  The reader should notice that in our above breakdown of the difficulties associated with the 3D Euler equation, the Constantin-Lax-Majda equation only takes into account the first of the difficulties: the vortex stretching term is quadratic and non-local.  
\subsubsection{The De Gregorio Model: The Regularity Conjecture}

One of the unfavorable properties of the reduction of Constantin, Lax, and Majda is that it does not behave well with the presence of diffusion. In fact, as Schochet has shown \cite{Sch}, the viscous CLM equation can experience finite time singularity formation \emph{before} its inviscid counterpart even for small viscosity. More than this, the model does not take into account the effects of the transport term. To fix this problem, De Gregorio (\cite{DG1},\cite{DG2}) introduced a variant of the CLM equation that is much closer to the 3D Euler equation, which even has a large set of spatially periodic steady states as the 3D Euler equation does. De Gregorio's model is:
$$\partial_t\omega+u\partial_x\omega=\omega\partial_x u$$
$$\partial_x u =H(\omega).$$
Notice that De Gregorio kept the same vortex stretching term but added a transport term as well. It is important to note that any function of the form $a_n \sin(nx)+b_n\cos(nx)$ is a stationary solution to De-Gregorio's equation while there are no non-trivial stationary states to the CLM equation\footnote{This is because any mean-zero periodic steady state $\omega$ would have to satisfy: $\omega H(\omega)=0$ and $\omega^2-H(\omega)^2=0$ which would imply that $\omega\equiv 0$. When $\omega$ has non-zero mean, consider $\omega=\tilde\omega+c$ with $\tilde\omega$ mean-zero and so $\tilde\omega H(\tilde\omega)+cH(\tilde\omega)=0$ and $H(\tilde\omega)^2-\tilde\omega^2=-2c\tilde\omega.$ Notice that if $A$ is $0$ somewhere, $B$ is also $0$ there. In other regions, we could have $B=-c$ but since $B$ must be mean-zero and $B$ only takes on the values $-c$ and $0$, $B\equiv 0.$ This rules out the existence of any non-constant periodic $L^2$ stationary solution of the CLM equation.}. Moreover, if one considers the CLM equation with data $\omega_0=\sin(x)$, the solution becomes singular in finite time -- developing a singularity like $\frac{1}{x}$ near $x=0$ as $t\rightarrow 2^-$ (see next section), but if one considers the De Gregorio equation with the same data, there is no singularity formation ($\sin(x)$ is stationary!). This already indicates that the transport term can have a stabilizing effect. Numerical simulations done by De Gregorio \cite{DG2}, Okamoto, Sakajo, and Wunsch \cite{OSW}, and others indicate, in fact, that singularity formation formation for De Gregorio's model is impossible from smooth data. In this work, we prove that singularity formation for De Gregorio's model is possible in spaces of H\"older-continuous functions. This seems to be the first rigorous large-data\footnote{To our knowledge, the only existing results were local well-posedness results (see \cite{OSW}).} result for De Gregorio's model. 

\subsubsection{The Cordoba-Cordoba-Fontelos Model: The $C^{\frac{1}{2}}$-conjecture.}

In the spirit of Constantin, Lax, and Majda's idea of getting a simple 1D model of the 3D Euler equation, Cordoba, Cordoba, and Fontelos found a 1D model for the so-called surface quasi-geostrophic (SQG) equation.\footnote{The SQG equation was introduced in the works of Constantin, Majda, and Tabak \cite{CMT1,CMT2} as a mathematical model for geophysical atmospheric flows, and also as a model for the vorticity dynamics in 3D Euler equations -- see for instance the book of Majda and Bertozzi \cite{MB}.} Recall the surface quasi-geostrophic equation:
$$\partial_t\theta+u\cdot\nabla\theta=0,$$
$$u=R^\perp \theta,$$ with $R^\perp=(R_2,-R_1)$ where $R_i$ are the Riesz transforms on $\mathbb{R}^2$. Cordoba, Cordoba, and Fontelos modeled this by the following 1D equation:
$$\partial_t \theta+u\partial_x\theta=0,$$
$$u=H(\theta).$$
Upon differentiating this equation and setting $\partial_x\theta=\omega$ we get:
$$\partial_t\omega+u\partial_x\omega+\omega\partial_x u=0$$
$$\partial_x u=H(\omega)$$
which is the same as De Gregorio's model except that the vortex stretching term has the opposite sign. 

Cordoba, Cordoba, and Fontelos  proved in \cite{CCF} finite time singularity formation for this model, again, using some special properties of the Hilbert transform. Unlike the case with the Constantin-Lax-Majda model, they were not able to solve the equation exactly but were able to prove that certain kinds of smooth initial data cannot have global-in-time solutions by proving that $$\partial_t X\geq cX^2$$ for some positive quantity $X$ which satisfies $X\leq \V\partial_x\theta\V_{L^\infty}$ and some constant $c>0$.  Thereafter, authors tried to push this blow-up result to prove that $\theta$ actually develops a shock at the time of blow-up. However, numerics indicate that a different phenomenon than shock formation is taking place. In fact, it seems that $\theta$ remains uniformly bounded in $C^\frac{1}{2}$ for some unapparent reason. Proving that this is actually the case would be very interesting since it indicates that simply by choosing the velocity field $u$ to be $H(\theta)$ rather than $\theta$, a phenomenon which "breaks the scaling" of the equation takes place. In other words, the dynamics of the equation would then not be dictated by clearly conserved quantities or scaling, since the strongest known conserved quantity is the $W^{1,1}$ norm of $\theta$ and the scaling critical quantity is the $W^{1,\infty}$ norm. The $C^\frac{1}{2}$ conjecture is then simply to prove that solutions to the Cordoba-Cordoba-Fontelos model remain uniformly bounded in $C^\frac{1}{2}$ up to the time of blow-up (see \cite{Do},\cite{LR},\cite{SV},\cite{Kis},\cite{CCF}). We close by mentioning a recent work by Hoang and Radosz \cite{HR} which rigorously established cusp formation in a non-local model, which is a slight variant of the CCF model. 

\subsubsection{The models of Okamoto, Sakajo, and Wunsch}

The culmination of all of these developments is the introduction of one continuum of models under which one can place all of those previously stated. This was done by Okamoto, Sakajo, and Wunsch \cite{OSW}. They introduced the following continuum of models:

$$\partial_t \omega + a u\partial_x\omega=\omega\partial_x u,$$
$$\partial_x u=H(\omega),$$ with $a\in\mathbb{R}$ a parameter. The case $a=-1$ corresponds to the Cordoba-Cordoba-Fontelos model, the case $a=0$ corresponds to the Constantin-Lax-Majda model, and the case $a=1$ corresponds to De Gregorio's model. While it may seem that different values of $a$ can lead to totally different phenomena which would bring into question the efficacy of such a continuum, we will argue in this work that there seems to be a deeper connection between all of these models as well as the questions mentioned above: particularly the global regularity question for De Gregorio's model and the $C^\frac{1}{2}$ conjecture for the Cordoba-Cordoba-Fontelos model.  This connection is most visible when one considers self-similar solutions. 

Returning to the issue of finite time singularities, it is intuitively clear that when $a < 0$, the transport effect would work together with the stretching effect to develop a singularity. Indeed, Castro and C{\'o}rdoba \cite{CaCo} managed to prove that there is finite time blow up of smooth solutions in the case of negative $a$. The question of finite time singularity/global regularity was left open for $a > 0$, and one of our main results shows that when $a$ is small and $C^\infty$ data is considered, the blow up indeed occurs, and it may occur even in a self-similar way. Moreover, for each $a>0$ there exists initial data in $C^{\alpha(a)}$ for some $\alpha(a) > 0$ sufficiently small for $a$ large, which leads to a finite-time singularity. 

\subsubsection{Connections with the SQG Equation}

It is interesting to note that both the De Gregorio and Okamoto-Sakajo-Wunsch systems have direct connections with the SQG equation discussed in the above. Indeed, in the work of Castro and C{\'o}rdoba \cite{CaCo}, it was observed that with the so-called \textit{stagnation point similitude} ansatz \begin{equation*}
\begin{split}
\theta(t,x_1,x_2) = x_2 f(t,x_1), 
\end{split}
\end{equation*} if $f$ solves the De Gregorio system, then one obtains a solution to the SQG equation. Moreover, in our recent work \cite{EJ1}, we have established that one can uniquely solve the SQG equation (locally in time) with the \textit{radial homogeneity} ansatz, which is in polar coordinates \begin{equation*}
\begin{split}
\theta(t,r,s) = r \cdot h(t,s) ,\qquad  s \in [-\pi,\pi)
\end{split}
\end{equation*} and then $h(t,\cdot)$ on the circle solves a system which is essentially the Okamoto-Sakajo-Wunsch system with $a = 2$, up to a very smooth term in the relation connecting the scalar $h$ with the velocity $u$. The importance of this ansatz (as opposed to the stagnation point ansatz) is that singularity formation from smooth data for radially homogeneous solutions (which have infinite energy) implies finite-time singularity formation from compactly supported (finite energy) data in a suitable local well-posedness class, as shown in \cite{EJ1}.

%Closing this section, let us note that the case $a = -2 $ have appeared in the work of C{\'o}rdoba, C{\'o}rdoba, Fontelos \cite{CCF} as a model for the SQG equation, and also as a model of the vortex sheet problem in a paper of Baker, Li, and Morlet \cite{BLM}. Finite time blow up was established in \cite{CCF}.
%Based on this observation, one may speculate that once one decreases the coefficient of the transport term, there is a finite time singularity, and global regularity upon increasing the coefficient of the transport term. Along these lines, Okamoto-Sakajo-Wunsch \cite{OSW} introduced the generalized system \eqref{eq:DeG}. 

\subsection{The Setting of the Present Paper and the Main Results}

In this paper, we consider the Okamoto-Sakajo-Wunsch systems in the following form:\footnote{Note that the form of the equation is slightly different from that in the introduction and in the literature \cite{CaCo,CLM,DG1,OSW}, but it is equivalent up to a rescaling of time. Our convention makes some of the formulas a bit simpler.} 
\begin{equation}\label{eq:DeG}
\begin{split}
\partial_t \omega+ a\,u\partial_x \omega = 2 \omega\partial_x u, \qquad u = -\Lambda^{-1}\omega
\end{split}
\end{equation} where $a\in\mathbb{R}$ is a parameter and $u(t,\cdot)$ and $\omega(t,\cdot)$ are functions of one spatial variable $x \in \mathbb{R}$. Our first main result shows that when $|a|$ is small, there exists a smooth initial data which blows up in finite time in a self-similar manner. This is to say that 

\begin{theorem}\label{thm:main1}
	There exists some absolute constant $a_0 > 0$ such that for $|a| < a_0$, there is an odd initial data $\omega_0 = \omega_0^a \in H^3(\mathbb{R}) $, depending analytically on $a$, which blows up in finite time in a self-similar manner.
	
	More precisely, for $|a| < a_0$, there exists some $\lambda = \lambda(a)$ depending analytically on $a$ and with $\lambda(0) = 0$, such that the unique local solution of \eqref{eq:DeG} with initial data $\omega_0:=\omega_0^a$ in $C([0,1);H^3(\mathbb{R}))$ is given by \begin{equation*}
	\begin{split}
	\omega(t,x) = \frac{1}{1-t} \omega_0\left( \frac{x}{(1-t)^{1+\lambda}} \right),
	\end{split}
	\end{equation*} which blows up precisely at $t = 1$. 
\end{theorem}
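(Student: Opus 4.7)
I substitute the self-similar ansatz $\omega(t,x) = (1-t)^{-1}\Omega\bigl(x(1-t)^{-(1+\lambda)}\bigr)$ into \eqref{eq:DeG} and obtain the profile equation
\begin{equation*}
F(\Omega,\lambda,a) := \Omega + (1+\lambda)\,y\Omega' + a U \Omega' - 2\Omega U' = 0, \qquad U = -\Lambda^{-1}\Omega.
\end{equation*}
I will construct $(\omega_0^a,\lambda(a))$ by an analytic implicit function argument, perturbing in $a$ off an explicit profile for the CLM case $a=0$. At $a=\lambda=0$ the equation reads $(y\Omega)' = 2\Omega H\Omega$; using the Tricomi identity $H(\Omega H\Omega) = \frac{1}{2}((H\Omega)^2-\Omega^2)$, the complex variable $Z := \Omega + iH\Omega$ satisfies the Riccati ODE $Z+yZ' = -iZ^2$, whose solutions that are holomorphic in the upper half plane and decay at infinity form the one-parameter family $Z_C(y) = -(y+iC)^{-1}$. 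Choosing $C=1$ yields the odd, real-analytic profile $\Omega_0(y) = -y/(1+y^2)$, with $H\Omega_0(y) = 1/(1+y^2)$, which decays like $1/|y|$ at infinity and hence lies in $H^k(\mathbb{R})$ for every $k$, in particular in $H^3$.

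\textbf{Linearization and the analytic implicit function theorem.} Writing $L_0 := D_\Omega F|_{(\Omega_0,0,0)}$ one finds
\begin{equation*}
L_0\phi = \phi + y\phi' - 2(H\Omega_0)\phi - 2\Omega_0 H\phi, \qquad D_\lambda F|_{(\Omega_0,0,0)} = y\Omega_0'.
\end{equation*}
The dilation symmetry $\Omega(y)\mapsto\Omega(Cy)$ of the profile equation (valid at every $(a,\lambda)$) forces $L_0(y\Omega_0')=0$. The central analytical step is to establish, on the Banach space $X$ of odd $H^3(\mathbb{R})$ functions, that (i) $\ker L_0 = \mathbb{R}\cdot y\Omega_0'$, and (ii) $y\Omega_0' \notin \operatorname{range}(L_0)$. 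Granted these, the augmented operator $(\phi,\mu)\mapsto L_0\phi + \mu\,y\Omega_0'$ is Fredholm of index $0$ with one-dimensional kernel spanned by the scaling mode, and becomes an isomorphism once one imposes a transverse normalization such as $\langle\phi,y\Omega_0'\rangle_{L^2}=0$. Since $F$ is quadratic in $\Omega$ (via the bilinear product $\Omega U'$, with $U$ linear in $\Omega$) and affine in $(\lambda,a)$, it is real-analytic on $X\times\mathbb{R}\times\mathbb{R}$, and the analytic implicit function theorem then produces an $a_0>0$ and real-analytic curves $a\mapsto(\Omega(a),\lambda(a))$ on $(-a_0,a_0)$ with $(\Omega(0),\lambda(0))=(\Omega_0,0)$ and $F(\Omega(a),\lambda(a),a)\equiv 0$. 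Setting $\omega_0^a := \Omega(a)$, the self-similar ansatz yields an odd $H^3$ classical solution of \eqref{eq:DeG} on $[0,1)$ of exactly the stated form; local well-posedness of \eqref{eq:DeG} in $H^3$ (standard energy estimates) identifies it with the unique local solution emanating from $\omega_0^a$, and the $(1-t)^{-1}$ prefactor forces blow-up of the $H^3$ norm precisely at $t=1$.

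\textbf{Main obstacle.} The hard part is verifying (i) and (ii). The cleanest route is complexification: setting $\zeta := \phi + iH\phi$, the linearized equation $L_0\phi = g$ becomes the first-order complex linear ODE $\zeta + y\zeta' + 2iZ_0\zeta = g+iHg$ on the real line, which with $Z_0 = -(y+i)^{-1}$ reads $y\zeta' + \frac{y-i}{y+i}\zeta = G$ and admits the integrating factor $(y+i)^2/y$. The homogeneous solution $\zeta_h(y) = y/(y+i)^2$ decomposes on the real axis as $y\Omega_0' + iH(y\Omega_0')$, recovering the scaling kernel direction; since the ODE is first-order, no second admissible kernel direction can exist, proving (i). For the forcing $g = y\Omega_0'$, explicit quadrature gives $\zeta(y) = \frac{y}{(y+i)^2}(\log y + c)$, whose unavoidable logarithmic singularity at the origin is incompatible with the $H^3$ regularity demanded of $\phi = \operatorname{Re}\zeta$, proving (ii). The delicate point is to carry out this quadrature and the accompanying symmetry/decay analysis rigorously in the odd $H^3$ class, tracking both the behavior at $y=0$ and at infinity together with upper-half-plane holomorphicity --- which is precisely what makes the closed-form structure afforded by the Tricomi identity indispensable for the construction.
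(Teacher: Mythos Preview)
Your overall strategy---perturbing off the explicit CLM profile, complexifying the linearized operator, and introducing the extra scalar $\lambda$ to compensate for a one-dimensional cokernel---is the same as the paper's. The gap is in what you claim follows from (i) and (ii).

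The implication ``(i) and (ii) $\Rightarrow$ the augmented operator is Fredholm of index $0$'' is not justified. Knowing $\dim\ker L_0=1$ and that one particular vector lies outside the range says nothing about whether the range is closed or of finite codimension. The operator $L_0$ contains the term $y\phi'$ and is certainly not identity-plus-compact on $H^3$, so there is no soft route to the Fredholm property. What the implicit function theorem actually needs is that $L_0$, restricted to a codimension-one subspace (the paper uses $\{\phi:\phi'(0)=0\}$), is an isomorphism onto the hyperplane $\{g\in X: g'(0)+2Hg(0)=0\}$ with an $H^3$-bounded inverse. Establishing this is the entire analytical content of the paper's proof: an explicit integral representation for $L_0^{-1}$ is extracted from the complex ODE (your integrating factor $(y+i)^2/y$ is exactly what appears), and then two lemmas of Hardy-type estimates are needed to show this inverse is bounded $H^3\to H^3$, with additional care because the right-hand sides $G_n$ involve derivatives $F_j'$ and sit only in $H^2$. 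Your outline sets up the ODE correctly but stops just before the step that carries all the weight; the logarithmic-singularity computation for (ii) is correct in spirit but is really just the observation that $zF_0'$ fails the pointwise solvability condition $g'(0)+2Hg(0)=0$, which is both easier to check and more informative.

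Two smaller points. Your argument for (i) via ``the ODE is first order'' is too quick: the complex ODE has a one-\emph{complex}-dimensional kernel, and you must invoke the odd/even parity of $(\phi,H\phi)$ to cut this down to one real dimension (equivalently, $d=H\phi$ even forces $d'(0)=0$, leaving only $\phi'(0)$ as a free real constant). And there is a sign slip: with your profile equation one has $\Omega+y\Omega'+2\Omega H\Omega=0$ at $a=\lambda=0$, solved by $\Omega_0(y)=y/(1+y^2)$, not $-y/(1+y^2)$.
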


\begin{remark}
	Even though we have chosen to work with a finite regularity space $H^3(\mathbb{R})$, a straightforward modification of our arguments actually gives that the profile $\omega_0$ belongs to any $H^s(\mathbb{R})$ for $s \ge 0$ and in particular to the space of analytic functions. 
\end{remark}

\begin{theorem}\label{thm:main1prime}
	Let $\omega^{a}(x,t)$ be the self-similar solutions constructed in Theorem \ref{thm:main1}. Then, $\omega^a$ remains uniformly smooth up to the blow-up time away from $x=0$. That is, given $b>0$, $$\sup_{(x,t)\in (\mathbb{R}\backslash (-b,b))\times [0,1]}|\omega^a(x,t)|\leq C(b).$$  
	Moreover,  $\omega^a$ remains in the space of weak $L^{1+\lambda(a)}$ functions uniformly up to the blow-up time. That is, $\omega^a\in L^\infty([0,1]; L^{1+\lambda(a),\infty}).$
	Finally, $$\sup_{t\in [0,1]}\int_{-1}^{1} |\omega^a(x,t)|^pdx<\infty$$ for all $p<1+\lambda(a)$. 
\end{theorem}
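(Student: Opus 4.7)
The plan is to reduce every claim to the decay rate of the self-similar profile $\omega_0$ at infinity, namely
\[
|\omega_0(y)| \lesssim (1+|y|)^{-1/(1+\lambda(a))}. \qquad (\star)
\]
Once $(\star)$ is established, each of the three assertions follows from direct scaling computations applied to the explicit formula $\omega^a(t,x) = (1-t)^{-1} \omega_0\bigl(x/(1-t)^{1+\lambda}\bigr)$ provided by Theorem \ref{thm:main1}.

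To obtain $(\star)$, I would substitute the self-similar ansatz into \eqref{eq:DeG}. Writing $u(t,x) = (1-t)^{\lambda} U(y)$ with $y = x/(1-t)^{1+\lambda}$ and $U' = H(\omega_0)$, the profile equation becomes
\[
\omega_0 + \bigl[(1+\lambda)y + a\, U\bigr]\omega_0' = 2\,\omega_0 \, H(\omega_0).
\]
Since $\omega_0$ is odd and in $H^3$, the Hilbert transform satisfies $|H(\omega_0)(y)| = O(|y|^{-2})$ as $|y|\to\infty$ (via the standard far-field expansion, using $\int \omega_0 = 0$), and $U$ is bounded on $\mathbb{R}$. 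Viewing the profile equation as a first-order linear ODE in $\omega_0$,
\[
\frac{\omega_0'(y)}{\omega_0(y)} \;=\; \frac{2H(\omega_0)(y) - 1}{(1+\lambda)y + a\, U(y)} \;=\; -\frac{1}{(1+\lambda)\,y} + O(|y|^{-2}),
\]
and integrating from some large $y_0$ to $y$, the $O(|y|^{-2})$ error is integrable at infinity and one recovers $(\star)$.

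Granting $(\star)$, for $|x|\geq b$ we estimate
\[
|\omega^a(t,x)| \;=\; \frac{|\omega_0(x/(1-t)^{1+\lambda})|}{1-t} \;\lesssim\; \frac{1}{1-t}\cdot\frac{1-t}{|x|^{1/(1+\lambda)}} \;\leq\; C\, b^{-1/(1+\lambda)},
\]
which is the first assertion. The anisotropic scaling $f \mapsto (1-t)^{-1}f(\cdot/(1-t)^{1+\lambda})$ preserves the $L^{p,\infty}$ quasi-norm precisely when $p = 1+\lambda$, so $\|\omega^a(t,\cdot)\|_{L^{1+\lambda,\infty}} = \|\omega_0\|_{L^{1+\lambda,\infty}} < \infty$ by $(\star)$. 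Finally, for $p<1+\lambda$, changing variables $y = x/(1-t)^{1+\lambda}$ yields
\[
\int_{-1}^1 |\omega^a(t,x)|^p \, dx \;=\; (1-t)^{(1+\lambda)-p}\int_{-R}^R |\omega_0(y)|^p \, dy, \qquad R := (1-t)^{-(1+\lambda)};
\]
by $(\star)$ the inner integral grows like $R^{1-p/(1+\lambda)}$, and the two $(1-t)$-exponents cancel exactly to give a uniform $O(1)$ bound. The principal obstacle is $(\star)$ itself: one must control both the $aU$ correction in the coefficient of $\omega_0'$ and the nonlinear term $H(\omega_0)$, and track the implicit constant as $a\to 0$, where $\lambda(a)\to 0$ and the decay degenerates to the borderline rate $|y|^{-1}$, putting the scale-invariant space and the truncated $L^p$ integral at the edge of convergence.
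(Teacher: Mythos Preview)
Your overall strategy---reduce everything to the decay estimate $(\star)$ for the profile, then read off the three conclusions by scaling---is exactly what the paper does, and your scaling computations are correct. The gap lies in your derivation of $(\star)$.

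Both intermediate claims you invoke are unjustified and in fact false. From $\omega_0\in H^3(\mathbb{R})$ alone one has no $L^1$ or weighted-$L^1$ control, so the ``far-field expansion'' giving $H\omega_0(y)=O(|y|^{-2})$ is unavailable; indeed, once $(\star)$ is known, $\omega_0$ decays only like $|y|^{-1/(1+\lambda)}$, and for $\lambda>0$ this is slower than $|y|^{-1}$, so $H\omega_0$ has the same slow decay, not $O(|y|^{-2})$. Similarly, $U(y)=\int_0^y H\omega_0$ need not be bounded: from $H\omega_0\in L^2$ one only gets $|U(y)|\lesssim |y|^{1/2}$ (and when $\lambda>0$, $U$ genuinely grows). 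Your argument is therefore circular as written.

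The structure is salvageable: the weaker bounds $H\omega_0\in L^2$ and $|U(y)|\lesssim |y|^{1/2}$ already suffice, since $aU/((1+\lambda)y)=O(|y|^{-1/2})$ is negligible and $\int_{y_0}^\infty |H\omega_0(s)|\,s^{-1}\,ds<\infty$ by Cauchy--Schwarz. You would still need to argue that $\omega_0$ has no zeros for large $|y|$ to make sense of $\omega_0'/\omega_0$. The paper sidesteps all of this by multiplying the profile equation by the integrating factor $z^{1/(1+\lambda)-1}$ and integrating; the right-hand side is then controlled directly using only $\|F\|_{L^2}$, $\|HF\|_{L^2}$, and an integration by parts on the transport term (plus a short bootstrap when $\lambda\le 0$), which yields $(\star)$ without any pointwise assumptions on $H\omega_0$ or $U$ and without needing non-vanishing of the profile.
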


\begin{remark}
As a part of our construction, we shall see that $\lambda'(0)=-2+\ln 4<0$. Hence, we deduce that for small $a<0,$ $\omega$ belongs to $L^p([-1,1])$ for all $1\leq p<1+\lambda(a).$
\end{remark}

\begin{corollary}
Let $\omega^a$ be the self-similar solutions constructed in Theorem \ref{thm:main1} and suppose that $a<0$ is sufficiently small. Then, since $u^a=-\Lambda^{-1}\omega^a$, $$\sup_{t\in [0,1]}\V u^a\V_{C^{\alpha(a)}}<\infty$$ with $\alpha(a)=1-\frac{1}{1+\lambda(a)}.$
\end{corollary}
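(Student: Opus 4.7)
The plan is to integrate the identity $\partial_x u^a = H(\omega^a)$ (which follows from $u^a = -\Lambda^{-1}\omega^a$) and extract the H\"older estimate on $u^a$ directly from the uniform weak-$L^{1+\lambda}$ bound on $\omega^a$ furnished by Theorem \ref{thm:main1prime}. To see why $\alpha(a) = 1 - 1/(1+\lambda(a))$ is the natural exponent, note that the self-similar form of $\omega^a$ combined with the $0$-homogeneity of the symbol of $\Lambda^{-1}\partial_x$ yields
\[
u^a(t,x) = (1-t)^{\lambda(a)}\, U_0\!\left(\frac{x}{(1-t)^{1+\lambda(a)}}\right), \qquad U_0 := -\Lambda^{-1}\omega_0,
\]
so that $[u^a(t,\cdot)]_{C^\alpha}$ scales as $(1-t)^{\lambda - (1+\lambda)\alpha}$, which is time-independent precisely when $\alpha = \lambda/(1+\lambda) = 1 - 1/(1+\lambda)$.

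The estimate itself relies on two classical ingredients. First, the Hilbert transform is bounded on the Lorentz space $L^{p,\infty}(\mathbb{R})$ for every $p \in (1,\infty)$, which follows by real interpolation from its $L^p$-boundedness; hence Theorem \ref{thm:main1prime} gives
\[
\sup_{t\in[0,1]} \V H(\omega^a(t,\cdot))\V_{L^{1+\lambda,\infty}} \le C \sup_{t\in[0,1]} \V \omega^a(t,\cdot)\V_{L^{1+\lambda,\infty}} < \infty.
\]
Second, for any $g \in L^{p,\infty}(\mathbb{R})$ with $p > 1$ and any bounded interval $I \subset \mathbb{R}$, the layer-cake formula yields the standard weak-$L^p$ integration inequality
\[
\int_I |g(y)|\, dy \le C_p \V g \V_{L^{p,\infty}} |I|^{1 - 1/p}.
\]

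Combining these two facts via the fundamental theorem of calculus gives, for all $t \in [0,1]$ and $x, x' \in \mathbb{R}$,
\[
|u^a(t,x) - u^a(t,x')| = \left|\int_{x'}^x H(\omega^a)(t,y)\, dy\right| \le C \V \omega^a(t,\cdot) \V_{L^{1+\lambda,\infty}} |x - x'|^{\alpha(a)},
\]
which is the desired uniform bound on the $C^{\alpha(a)}$ seminorm. Since $u^a$ inherits the odd symmetry of $\omega^a$, we have $u^a(t,0) = 0$, and this upgrades the seminorm bound to a full $C^{\alpha(a)}$-bound on any bounded set (and globally provided $U_0 \in L^\infty$, which holds whenever $\omega_0 \in L^1$, as is typical for the profile produced by the construction). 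There is no essential obstacle: the corollary is a short unwinding of Theorem \ref{thm:main1prime} via boundedness of $H$ on Lorentz spaces and the weak-$L^p$ integration inequality.
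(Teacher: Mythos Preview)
Your argument is correct and is precisely the intended derivation: the paper states this corollary without proof, leaving it as a direct consequence of Theorem~\ref{thm:main1prime}, and your route via boundedness of $H$ on $L^{1+\lambda,\infty}$ together with the weak-$L^p$ integration inequality $\int_I |g| \le C_p\|g\|_{L^{p,\infty}}|I|^{1-1/p}$ is the natural way to unpack it. The scaling computation identifying $\alpha(a)=\lambda/(1+\lambda)$ is also exactly right.

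One small correction to your parenthetical about the global $L^\infty$ bound: in the regime of the corollary ($a<0$ small, hence $\lambda(a)>0$ by the remark $\lambda'(0)=-2+\ln 4<0$), the profile decays like $|z|^{-1/(1+\lambda)}$ with exponent strictly less than $1$, so $\omega_0\notin L^1(\mathbb{R})$; moreover $\omega_0\in L^1$ would not by itself give $U_0\in L^\infty$ since $H$ is not bounded on $L^1$. In fact $U_0(x)=\int_0^x H\omega_0$ generically grows like $|x|^{\alpha(a)}$ at infinity, so the global $L^\infty$ bound on $u^a$ is \emph{not} available. This is not a defect of your proof but of the statement as written: compare the companion corollary after Theorem~\ref{thm:main2}, which explicitly restricts to $C^{\alpha}([-1,1])$. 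Your argument already delivers the uniform seminorm bound on $\mathbb{R}$ and, via $u^a(t,0)=0$, the full $C^{\alpha(a)}$ norm on any bounded interval, which is what is meant. Simply drop the parenthetical and state the conclusion on bounded sets.
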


\begin{remark}
Notice that with our convention the De Gregorio model is the case $a=2$ and the Cordoba-Cordoba-Fontelos model is the case $a=-2$. Hence, to confirm the regularity conjecture for the De Gregorio model, one would need to show that $\lambda(2)=-2$ and to confirm the $C^\frac{1}{2}$ conjecture for the CCF model, one would need to show that $\lambda(-2)=1$ -- assuming that $\lambda$ can be continued to $|a|\leq 2$. 
\end{remark}
The second main theorem is one on finite-time singularity for \emph{all} the Okamoto-Sakajo-Wunsch models -- including De Gregorio's model -- by taking solutions with sufficiently low H\"{o}lder continuity index. 

\begin{theorem}\label{thm:main2}
	There exists some absolute constant $c_0 > 0$ such that for $\alpha \in  \{1/n, n \in \mathbb{N} \}$ and $|a| < c_0/\alpha$, there  exists some value $\lambda^{(\alpha)}(a) $ satisfying $\lambda^{(\alpha)}(0) = 0$, $\lambda^{(\alpha)}(a) > -2$, and %\begin{equation*}
	%\begin{split}
	%1 < p := \frac{1+\lambda^{(\alpha)}(a)}{\alpha}  < \infty,
	%\end{split}
	%\end{equation*} 
	an odd initial data $\omega_0^{(\alpha)}(a) \in L^{p,\infty}\cap C^\alpha(\mathbb{R})$ for $p=\frac{1+\lambda^{\alpha}(a)}{\alpha}$ which blows up in a self-similar way.
	
	More precisely, the unique local in time solution of \eqref{eq:DeG} with initial data $\omega_0$ in $C([0,T);C^\alpha(\mathbb{R}))$ is given by \begin{equation*}
	\begin{split}
	\omega(t,x) = \frac{1}{1-t} \omega^{(\alpha)}_0\left( \frac{x}{(1-t)^{\frac{1+\lambda^{(\alpha)}(a)}{\alpha}}    }\right),
	\end{split}
	\end{equation*} which blows up at $t = 1$.
\end{theorem}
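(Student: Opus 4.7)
The plan is to parallel the proof of Theorem~\ref{thm:main1}, but starting from a one-parameter family of explicit self-similar solutions of the Constantin-Lax-Majda equation (the $a=0$ case) indexed by the H\"older exponent $\alpha$, rather than from the single analytic profile used there. Inserting the self-similar ansatz $\omega(t,x) = (1-t)^{-1}\, W\!\bigl(x/(1-t)^{(1+\lambda)/\alpha}\bigr)$ into \eqref{eq:DeG}, writing $\mu := (1+\lambda)/\alpha$, and using that $\partial_x\Lambda^{-1}$ is (up to sign) the Hilbert transform, reduces the problem to a nonlinear eigenvalue problem for the pair $(W,\lambda)$ of the schematic form
\[
W + (\mu\, y - a\,U)\, W' \;=\; 2\, W\, H(W), \qquad U' = H(W),
\]
with $W$ odd and lying in $C^\alpha(\mathbb{R}) \cap L^{p,\infty}(\mathbb{R})$ for $p = (1+\lambda)/\alpha$.

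For the base case $a=0$ I would produce the profile $W_0^{(\alpha)}$ explicitly. Introducing $F(y) = W(y) + iH(W)(y)$, which extends holomorphically to a half-plane, the Tricomi identity turns the $a=0$ profile equation into the holomorphic ODE $F + \mu y F' + F^2 = 0$. This Bernoulli equation integrates to $F(y) = c/(y^\alpha - c)$ for a complex constant $c$; choosing $c$ so that the boundary values on the real line give a real, odd profile with decay $|W_0^{(\alpha)}(y)| \lesssim |y|^{-\alpha}$ at infinity places $W_0^{(\alpha)}$ in $C^\alpha(\mathbb{R})\cap L^{1/\alpha,\infty}(\mathbb{R})$. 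The restriction $\alpha = 1/n$ is precisely what allows this profile to be extended cleanly across the origin as an odd function.

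Next, I would linearize the profile equation jointly in $(W,\lambda)$ about $(W_0^{(\alpha)}, 0)$ and apply an analytic implicit function theorem in $a$ on a suitable weighted H\"older space. The scalar $\lambda$ absorbs the one-dimensional kernel coming from the scaling invariance of the ansatz. Passing to the logarithmic coordinate $s = \log|y|$ converts the transport piece $\mu y \partial_y$ into $\mu \partial_s$, and the resulting constant-coefficient linear operator should be near-diagonalized by a Mellin/Fourier transform whose symbol can be computed in closed form from $W_0^{(\alpha)}$. Once invertibility modulo the scaling direction is established, the implicit function theorem yields an analytic curve $\bigl(W^{(\alpha)}(a), \lambda^{(\alpha)}(a)\bigr)$ on a neighbourhood of $a = 0$; combined with the local well-posedness theory in $C^\alpha$, this produces the blow-up statement of the theorem.

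The main obstacle is quantifying the invertibility of the linearization $L_\alpha$ with the correct dependence on $\alpha$. Its symbol degenerates as $\alpha \to 0$, and I would need to show that its coercivity constant on the complement of the scaling direction behaves like $1/\alpha$. Since the perturbative term $a\, U\, W'$ has operator norm comparable to $|a|$ on the relevant space, the implicit function theorem runs as long as $|a|$ is dominated by this coercivity, yielding precisely the threshold $|a| < c_0/\alpha$. Extracting this sharp $1/\alpha$ behaviour, rather than a weaker estimate uniform in $\alpha$, is the delicate step and is what distinguishes Theorem~\ref{thm:main2} from the smooth-data result of Theorem~\ref{thm:main1}.
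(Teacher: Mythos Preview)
Your high-level strategy---start from explicit $C^\alpha$ self-similar profiles for the $a=0$ equation, then continue in $a$ with $\lambda$ absorbing the one-dimensional obstruction---matches the paper exactly, and your derivation of the base profile via the holomorphic Bernoulli ODE is essentially the paper's Section~4.1. But two points are off.

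First, your explanation of the restriction $\alpha = 1/n$ is incorrect. The profile $F_0^{(\alpha)}(z) = \sin(\alpha\pi/2)\,\mathrm{sgn}(z)|z|^\alpha / (1 + 2\cos(\alpha\pi/2)|z|^\alpha + |z|^{2\alpha})$ is odd and well-defined for every $0<\alpha<1$; there is no issue extending across the origin. The reason the paper restricts to $\alpha=1/n$ is technical and lives inside the linear estimates: instead of your proposed logarithmic/Mellin route, the paper substitutes $w=z^\alpha$ and works in $H^3(\mathbb{R}^+_w)$. This forces one to understand how the Hilbert transform acts on functions of $|z|^{1/n}$, which leads to the operators $\tilde H^{(n)}$ whose kernels are rational in $(w,s)$ precisely when $\alpha=1/n$, allowing the explicit decomposition and the $L^2$ bound $\|\tilde H^{(n)}\|\le Cn$ of Lemma~\ref{lem:L2_hilbert_type}. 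Your Mellin approach would have to confront the same issue in disguise (the Hilbert transform is not a Fourier multiplier in $\log|y|$), and it is not clear it would be any cleaner.

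Second, your accounting for the threshold $|a|<c_0/\alpha$ is not the paper's mechanism. The inverse $\tilde L^{-1}$ (in the $w$-variable) is \emph{not} small of order $\alpha$; it is $O(1)$. What happens instead is that (i) the base profile satisfies $\|\tilde F_0^{(\alpha)}\|_{H^3}\sim\alpha$ because of the $\sin(\alpha\pi/2)$ prefactor, and (ii) in the explicit formula for $\tilde L^{-1}$ the pieces involving $\tilde H$ (which lose $1/\alpha$) always appear multiplied by $\sin(\alpha\pi/2)\sim\alpha$, so the combination stays bounded. Tracking this through the iteration gives $\|\tilde F_n\|_{H^3}\lesssim \alpha(c_0\alpha)^n$ and $|\lambda_n|\lesssim (c_0\alpha)^n$, hence convergence for $|a|<c_0/\alpha$. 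So the gain is in the \emph{source terms} at each order (smallness of $\tilde F_0$ and the factor $\alpha$ coming from $zF'=\alpha w\tilde F'$), not in a coercivity estimate for the linearization itself.
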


\begin{corollary}
For $a=-2,$ the Cordoba-Cordoba-Fontelos model, there exists a family of solutions $\omega^\alpha(x,t)$ initially belonging to the class $L^{\frac{1+\lambda(\alpha)}{\alpha},\infty}\cap C^{\alpha}(\mathbb{R})$ for all $|\alpha|<\alpha_0$ a small constant and such that $\lambda(\alpha)\leq C_0 \alpha$ as $\alpha\rightarrow 0$ which blows up at $t=1$ in a neighborhood of $x=0$. Moreover, the associated velocity field $u^\alpha$ remains uniformly bounded in the space $C^{1-\frac{1}{1+\lambda(\alpha)}}([-1,1])$ up to the time of blow-up.  Notice that $\frac{1}{2}<1-\frac{1}{1+\lambda(\alpha)}$ for $\alpha$ sufficiently small.  
\end{corollary}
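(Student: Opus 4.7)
The plan is to deduce the corollary from Theorem~\ref{thm:main2} applied at $a=-2$ --- which in the paper's sign convention is exactly the Cordoba-Cordoba-Fontelos model --- combined with a scaling analysis of the velocity. The admissibility hypothesis $|a|<c_0/\alpha$ becomes $\alpha<c_0/2$; setting $\alpha_0:=c_0/2$ and applying Theorem~\ref{thm:main2} at each $\alpha\in(0,\alpha_0)$ immediately supplies odd initial data $\omega_0^\alpha\in L^{(1+\lambda(\alpha))/\alpha,\infty}\cap C^\alpha(\mathbb{R})$ producing a self-similar solution that blows up at $t=1$ in a neighborhood of $x=0$. The quantitative bound $\lambda(\alpha)\le C_0\alpha$ as $\alpha\to 0$ should be read off the construction: since $\lambda^{(\alpha)}(0)=0$ and $a\mapsto\lambda^{(\alpha)}(a)$ is produced by an analytic continuation --- parallel to the $H^3$ analysis of Theorem~\ref{thm:main1} in which the slope $\lambda'(0)=-2+\ln 4$ is computed explicitly --- the first-order expansion $\lambda^{(\alpha)}(a)=a\,\partial_a\lambda^{(\alpha)}(0)+O(a^2)$ evaluated at $a=-2$ yields a linear-in-$\alpha$ bound, provided one checks that the implicit-function constant in the continuation is uniform in $\alpha$.

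For the velocity regularity, the scale-invariance of $\Lambda^{-1}$ gives the self-similar representation
\[
u^\alpha(t,x)=(1-t)^{\beta-1}\,U^\alpha\!\left(\frac{x}{(1-t)^\beta}\right),\qquad \beta=\frac{1+\lambda(\alpha)}{\alpha},
\]
with $(U^\alpha)'=H\omega_0^\alpha$. Near $y=0$, the $C^\alpha$-boundedness of the Hilbert transform on $C^\alpha$ gives $U^\alpha\in C^{1,\alpha}$, so $|U^\alpha(y)|\lesssim|y|^{1+\alpha}$. At infinity, the critical saturation of the weak-$L^{(1+\lambda(\alpha))/\alpha}$ scaling forces the leading tail $\omega_0^\alpha(y)\sim c\,|y|^{-\alpha/(1+\lambda(\alpha))}=c\,|y|^{-1/\beta}$, and integration gives $U^\alpha(y)\sim c'\,|y|^{1-1/\beta}$. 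A case analysis on $|y-y'|$ versus $\max(|y|,|y'|)$ then promotes these pointwise bounds to the sharp global estimate $U^\alpha\in C^{1-1/\beta}(\mathbb{R})$. Substituting into the display above and rescaling difference quotients in $x\in[-1,1]$, the powers of $(1-t)$ cancel and yield $\sup_{t\in[0,1]}\|u^\alpha(t,\cdot)\|_{C^{1-1/\beta}([-1,1])}<\infty$, which is the regularity claimed by the corollary. Finally, $1/2<1-1/\beta$ reduces to $\beta>2$, and this holds for every sufficiently small $\alpha$ since $\beta=(1+\lambda(\alpha))/\alpha\to\infty$ as $\alpha\to 0$.

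The main obstacle is the rigorous justification of the leading asymptotic $U^\alpha(y)\sim c'|y|^{1-1/\beta}$ at infinity: a weak-$L^p$ bound on $\omega_0^\alpha$ does not by itself pin down a leading-order coefficient, and one must rule out cancellations introduced by the non-local antidifferentiation of $H\omega_0^\alpha$. The natural route is to extract the exact leading power directly from the self-similar profile equation $\Omega+\beta y\,\Omega'+aU\Omega'=2\Omega U'$ produced in the construction, and then bootstrap via a near-field/far-field decomposition of $\Lambda^{-1}$. A secondary obstacle is producing the quantitative slope estimate $\lambda(\alpha)\le C_0\alpha$, which requires reopening the implicit-function step in Theorem~\ref{thm:main2} to verify that the derivative in $a$ is uniformly bounded as $\alpha\to 0$, rather than merely using the pointwise value $\lambda^{(\alpha)}(0)=0$.
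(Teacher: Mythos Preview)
The paper does not give a separate proof of this corollary; it is presented as an immediate consequence of Theorem~\ref{thm:main2} together with the decay estimates established in its proof. Your approach is essentially the intended one, and your scaling computation for the velocity --- showing that the powers of $(1-t)$ cancel in $[u(t,\cdot)]_{C^{1-1/\beta}}$ once $U^\alpha\in C^{1-1/\beta}$ --- is exactly the right mechanism.

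Two clarifications dispose of your stated obstacles. For $\lambda(\alpha)\le C_0\alpha$, there is no need to reopen an implicit-function argument or compute a derivative in $a$: the proof of Theorem~\ref{thm:main2} already establishes the explicit coefficient bounds $|\lambda_n|\le c(c_0\alpha)^n$ for all $n\ge 1$, so summing the series at $a=-2$ gives $|\lambda^{(\alpha)}(-2)|\le\sum_{n\ge1}2^n c(c_0\alpha)^n\le C_0\alpha$ for $\alpha$ small. Your ``first-order expansion'' reasoning is slightly off as written --- mere uniform boundedness of $\partial_a\lambda^{(\alpha)}(0)$ in $\alpha$ would only yield $|\lambda|\le C$, not $|\lambda|\le C\alpha$; what is actually proved is $\lambda_n=O(\alpha^n)$. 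For the tail of $U^\alpha$, recall that $(U^\alpha)'=H\omega_0^\alpha$, not $\omega_0^\alpha$ itself, so the ``integration'' step must be applied to $H\omega_0^\alpha$. The decay $F^{(\alpha)}(z)\sim|z|^{-\alpha/(1+\lambda)}=|z|^{-1/\beta}$ is obtained in the final paragraph of the proof of Theorem~\ref{thm:main2} by the same argument as in Theorem~\ref{thm:main1prime}, and since the construction produces the profile as the real part of a function holomorphic on the upper half-plane with $HF^{(\alpha)}$ the imaginary part, $HF^{(\alpha)}$ inherits the same decay; integrating then gives $U^\alpha(y)\sim|y|^{1-1/\beta}$ at infinity without any cancellation issue.
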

This is to say that for each $\epsilon>0$ there are solutions to the CCF model which become singular in a way that keeps the $C^{1-\epsilon}$ norm of the velocity under control up to the time of singularity where the $C^1$ norm of the velocity becomes infinite.

\subsection{A toy model}
The main idea of our work is that if the transport term is "weaker" than the vortex stretching term, then singularities will form in finite time. The weakening of the transport term, in our setting, can happen in two ways:

\begin{enumerate}
\item By putting a small parameter in front of the transport term. 
\item By taking merely $C^\alpha$ vorticity with $\alpha > 0$ small.\footnote{We would like to mention a relevant work of Zlato\v{s} \cite{Z} who obtained exponential in time growth of the vorticity gradient in $L^\infty$ for the 2D Euler equation on $\mathbb{T}^2$. This was done by using initial vorticity whose gradient at the origin is only $C^\alpha$-regular.}
\end{enumerate}

We illustrate these ideas using an explicitly solvable toy model. Consider the following model where one simply replaces $\partial_x u$ in \eqref{eq:DeG} by its value at zero $\partial_x u(0) = -H\omega(0)$: \begin{equation}\label{eq:toy}
\begin{split}
\partial_t \omega - a H\omega(0) x \cdot \partial_x \omega + 2H\omega(0) \cdot \omega = 0,
\end{split}
\end{equation} where we assume the initial data $\omega_0$ to be odd, nonnegative on $\mathbb{R}^+$, and smooth away from the origin. Then, solving along the characteristics, \begin{equation*}
\begin{split}
\omega(t,x) = e^{- 2\int_0^t H\omega(s,0) ds } \cdot \omega_0( x e^{a\int_0^t H\omega(s,0) ds }),
\end{split}
\end{equation*} and using this representation for $\omega(t,\cdot)$ we compute \begin{equation*}
\begin{split}
H\omega(t,0) = - \frac{2}{\pi} \int_0^\infty  e^{- 2\int_0^t H\omega(s,0) ds } \omega_0( x e^{a\int_0^t H\omega(s,0)}) \frac{dx}{x} = - e^{- 2\int_0^t H\omega(s,0) ds } c_0,
\end{split}
\end{equation*} where $c_0 = - H\omega_0(0) > 0$, unless $\omega_0$ is identically zero. This in turn determines \begin{equation*}
\begin{split}
H\omega(t,0) = -\frac{1}{c_0^{-1} - 2t} ,
\end{split}
\end{equation*} and inserting this back into the formula for $\omega(t,\cdot)$, we conclude \begin{equation*}
\begin{split}
\omega(t,x) = \frac{1}{1-2c_0 t} \omega_0\left(  \frac{x}{(1 - 2c_0 t)^{-a/2}} \right).
\end{split}
\end{equation*}

We observe that: \begin{itemize}
	\item For $a < 2$, the solution develops a singularity at time $t^* = (2c_0)^{-1}$, for any smooth profile $\omega_0$ with $\omega_0'(0)\not=0$. 
	\item In the case $a = 2$, if the initial data is $C^1$ or better, then the solution stays smooth up to the time moment $(2c_0)^{-1}$ and can be continued as a smooth solution for all time; indeed, using the mean value theorem with $\omega_0(0) = 0$, \begin{equation*}
	\begin{split}
	\left| \frac{\omega_0(x(1-2c_0 t))}{1-2c_0 t} \right| \le x \V \omega_0'\V_{L^\infty}.
	\end{split}
	\end{equation*} On the other hand, if initially \begin{equation*}
	\begin{split}
	\omega_0(x) \approx C\,\mathrm{sgn}(x) |x|^\alpha
	\end{split}
	\end{equation*} near the origin for some $0 < \alpha < 1$ and for some constant $C > 0$, then the solution blows up in $L^\infty$ at time $(2c_0)^{-1}$. 
\end{itemize}

\subsection{The Self-Similar Ansatz}

In the case of $a = 0$, the system \eqref{eq:DeG} is just the Constantin-Lax-Majda equation, which has a simple and explicit self-similar solution \begin{equation*}
\begin{split}
\omega(t,x) = \frac{1}{1-t} F_0\left( \frac{x}{1-t} \right), \qquad F_0(z) = \frac{z}{1+z^2},
\end{split}
\end{equation*} see Figure \ref{fig:CLM_evolution}. 
%In fact, we note that blow-up in the CLM equation is \emph{generically} asymptotically equal to this self-similar solution at the time of blow-up.
%\begin{proposition}
%Let $\omega_0\in C^\infty(\mathbb{R})\cap L^2(\mathbb{R})$ be odd and positive on $(0,\infty)$ and such that $\omega_0'(0)\not=0$ and $H(\omega_0)(0)\not=0$. 
%\end{proposition}

The scaling symmetry of the model permits one to work in the self-similar variable $z = x/(1-t)^{1+\lambda}$ for any $\lambda \in \mathbb{R}$, and it is easy to see that a profile $F$ gives a self-similar solution to \eqref{eq:DeG} if and only if $F$ solves the differential equation  \begin{equation*}
\begin{split}
F(z) + \left( (1+\lambda)z - a \Lambda^{-1}F(z) \right) F'(z) + 2F(z) HF(z) = 0. 
\end{split}
\end{equation*} Then we proceed to show existence of a solution by essentially  ``linearizing'' around the special solution which corresponds to $a = 0, \lambda = 0$, and $F = F_0$. Proving the main theorem boils down to establishing that the inverse of the associated linear operator is bounded. 

\subsubsection{The $a=0$ case.}

In the special case of $a = 0$, the system \eqref{eq:DeG} reduces to the Constantin-Lax-Majda equation \cite{CLM} on $\mathbb{R}$: \begin{equation}\label{eq:CLM}
\begin{split}
\pr_t \omega + 2H(\omega)  \omega  = 0. 
\end{split}
\end{equation} 
%In the original work of Constantin, Lax, and Majda \cite{CLM}, the equation \eqref{eq:CLM} was suggested as a model for the vortex dynamics in the 3D Euler equations.

A particularly nice feature of this model is that it can be integrated: one may check that the pair \begin{equation}\label{eq:CLM_formula}
\begin{split}
\omega(t,x) = \frac{\omega_0(x)}{(1+t \cdot H\omega_0(x))^2 + (t\cdot \omega_0(x) )^2}, \quad H\omega(t,x) = \frac{(H\omega_0)(x)(1 + t\cdot H\omega_0(x)) + t\cdot \omega_0^2(x)}{(1+t \cdot H\omega_0(x))^2 + (t \cdot \omega_0(x) )^2}
\end{split}
\end{equation} provides a solution to \eqref{eq:CLM}. Assuming that the set \begin{equation*}
\begin{split}
Z = \{ x : \omega_0(x)  = 0 , \, H\omega_0(x) < 0 \}
\end{split}
\end{equation*} is nonempty, the formula \eqref{eq:CLM_formula} explicitly shows that $\omega(t,\cdot)$ blows up in $L^\infty(\mathbb{R})$ at $$t^* = ( \sup_{x \in Z} -H\omega_0(x)    )^{-1} > 0, $$ at the point of $Z$ which attains the supremum. In particular, when the initial data $\omega_0(x)$ is an odd function of $x$ and strictly positive on $(0,\infty)$, the solution becomes singular at the origin exactly at $t^* = -H\omega_0(0)^{-1} > 0$.

\begin{figure}
	\includegraphics[scale=0.4]{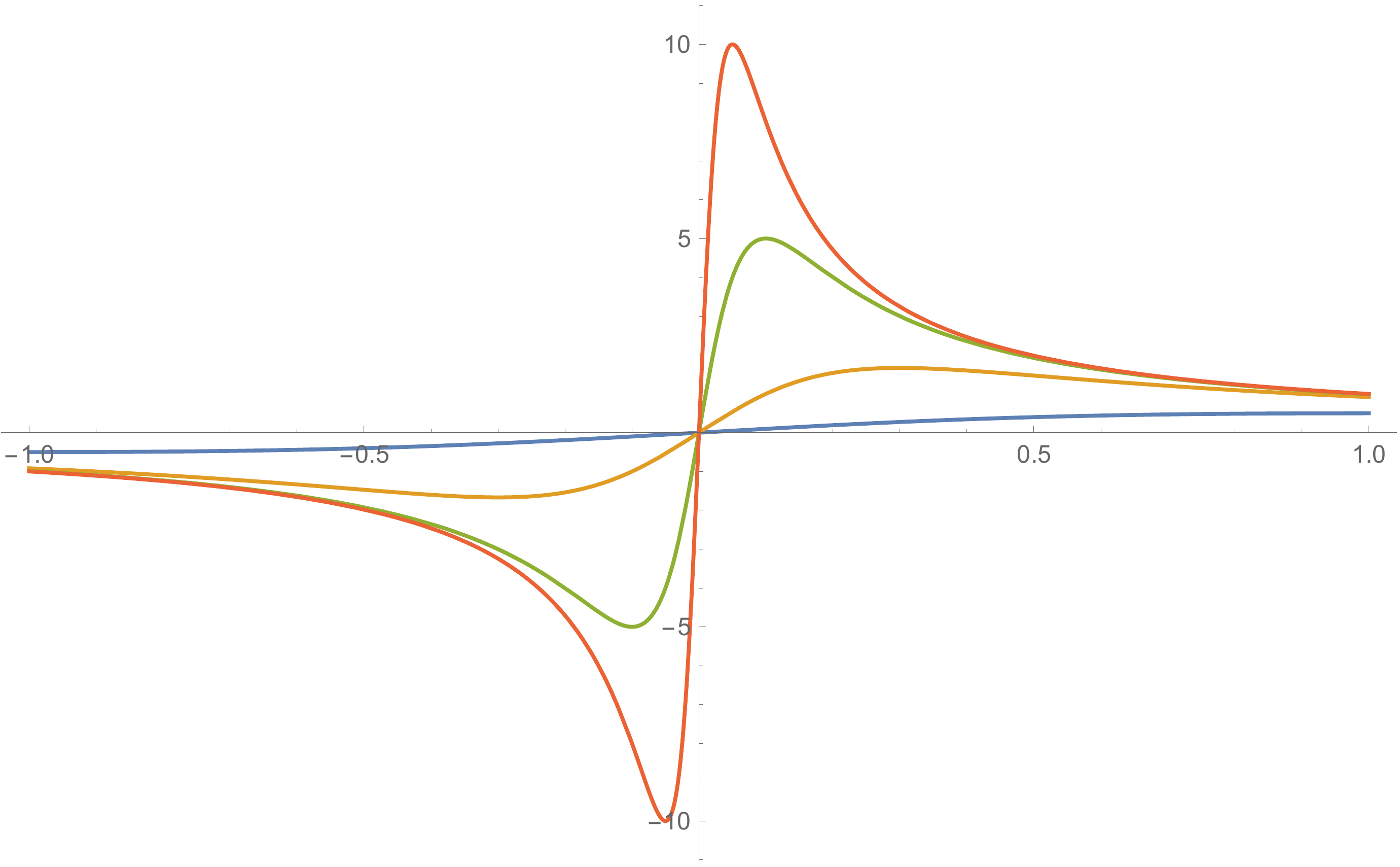} 
	\centering
	\caption{Evolution of the data $\omega_0(x) = x/(1+x^2)$ at time moments $t = 0.95, 0.9, 0.7, 0$ (from top to bottom).}
	\label{fig:CLM_evolution}
\end{figure}

As a special case, consider the initial data \begin{equation*}
\begin{split}
\omega_0(x) = \frac{x}{1+x^2}, 
\end{split}
\end{equation*} whose Hilbert transform equals \begin{equation*}
\begin{split}
H\omega_0(x) = -\frac{1}{1+x^2}.
\end{split}
\end{equation*} Plugging these identities into \eqref{eq:CLM_formula}, one can see that \begin{equation}\label{eq:CLM_special}
\begin{split}
\omega(t,x) = \frac{\frac{x}{x^2+1}}{\left( 1 - \frac{t}{x^2 + 1} \right)^2 + t^2 \cdot  \left(\frac{x}{x^2 + 1}\right)^2} = \frac{1}{1-t} \cdot \frac{\frac{x}{1-t}}{1+ \left(\frac{x}{1-t}\right)^2} = \frac{1}{1-t} \cdot \omega_0\left(\frac{x}{1-t}\right),
\end{split}
\end{equation} that is, the solution is self-similar and blows up at time $t = 1$; see Figure \ref{fig:CLM_evolution}. Note that the ``vorticity stretching'' factor $H\omega(t,\cdot)$ is always maximal at the origin, and therefore it drags the maximum point of $\omega(t,\cdot)$ towards the origin, and at the same time gets intensified.

\subsubsection{The case $a>0$.}

Moving on to the case of $a > 0$, let us stick to the situation where $\omega(t,\cdot)$ is odd and positive on $(0,\infty)$. Then, the velocity $u(t,\cdot)$ is again odd and always directed away from the origin, and therefore it counteracts the stretching term which tries to form a sharp gradient by transporting the maximum point towards the origin.
It is far from clear whether the stretching term can overcome the transport effect and still result in a finite-time blow up; what seems to happen is that for some critical value of $a$ (likely $a=2$), the transport term takes over and prevents the singularity formation. We conjecture that for smooth data there will be global regularity when $a\geq 2$ and singularity formation when $a<2$. Theorem \ref{thm:main1} confirms this for all $a<a_0$ for some constant $a_0>0$. For merely $C^\alpha$ data we will show that there can be singularity formation for any value of $a$ so long as $\alpha$ is taken small enough and this is the content of Theorem \ref{thm:main2}.

\subsection{The main steps in the proofs of Theorems \ref{thm:main1} and \ref{thm:main2}}

We close the introduction by giving a succinct explanation of how we go about proving Theorems \ref{thm:main1} and \ref{thm:main2}. 
\begin{enumerate}
\item \emph{Construction of self-similar solutions when $a=0$ which become singular in finite time.} To do this, one studies carefully the solution formula for the solutions to the $a=0$ equation. In the smooth case, one uses Taylor's theorem to extract a universal self-similar profile. In the $C^\alpha$ case, it turns out that we can do a similar procedure using an interesting fact about the Hilbert transform (Lemma \ref{lem:consistency_alpha}). This lemma says that if $g(z)=\mathrm{sgn}(z)k(|z|^\alpha)$ for some $0<\alpha<1$ with $k\in C^\infty$ and $k(0)=0$ then $\lim_{z\rightarrow 0}\frac{H(g)(z)-H(g)(0)}{|z|^\alpha}$ is well defined and is determined \emph{only} by $k'(0)$. One can then check that the asymptotic profiles derived this way are actually self-similar solutions. This leaves us with one smooth self similar solution when $a=0$ as well as a family of merely $C^\alpha$ self-similar solutions. In the smooth case, this solution is just of the form $\omega(x,t)=\frac{1}{1-t}F(\frac{x}{1-t})$ with $F(z)=\frac{z}{1+z^2}.$

\item \emph{Analysis of the linearization of the equation satisfied by self-similar solutions around the $a=0$ solution.} Normally, when one considers a functional equation $N(f,a)=0$ along with a known solution $(f^0,0)$, proving the existence of a family of solutions $(f^a,a)$ for $a$ small can be done using the Implicit Function Theorem applied in the right functional setting. Applying the Implicit Function Theorem, in turn, requires invertibility of the linear operator $\frac{\partial N}{\partial f}(f_0,0).$ Unfortunately, this linear operator is neither onto nor one-to-one in our case. In fact, for an $H^3(\mathbb{R})$ function to be in the range of this linear operator, it must satisfy a consistency condition relating its Hilbert transform at $0$ and its derivative at 0. However, since this consistency condition is one-dimensional, this problem can be remedied by introducing one other parameter which we call $\lambda(a)$. In particular, instead of looking merely for a solution of the form $\frac{1}{1-t}F^a(\frac{x}{1-t}),$ we now look for a solution of the form $\frac{1}{1-t}F^a(\frac{x}{(1-t)^{\lambda(a)}}).$ A judicious choice of $\lambda(a)$ allows us to carry out an iteration procedure to formally continue the $a=0$ solution and the iteration procedure converges so long as certain operators can be bounded on the spaces we are working in. These bounds become very delicate in the $C^\alpha$ case, especially since we need to be careful to get bounds with explicit dependence on $\alpha$. 

\item \emph{Technical lemmas related to the Hilbert transform and related operators.} To prove boundedness of the inverse of the modified linearized operator discussed above, we must use several important technical lemmas, some of which are classical and some of which seem to be new. We often make use of the Tricomi identity \eqref{eq:tricomi} and the classical Hardy inequality \eqref{eq:Hardy}. In fact, these are all we need in the smooth case. In the $C^\alpha$ case we need to prove (sharp) bounds for Hilbert transform-like operators which describe how the Hilbert transform acts on functions of $|x|^\frac{1}{n}$ for $n\in\mathbb{N}$. If for some $n\in\mathbb{N},$ $f(x)=\tilde f(|x|^\frac{1}{n})$ with $\tilde f$ an even $L^2$ function then we define the operators\footnote{For clarity of exposition in the proof, we will often write these operators simply as $\tilde H$ since $n$ is always kept fixed.} $\tilde H^{(n)}$ by $\tilde H^{(n)}(\tilde f)(w):= \mathrm{sgn}(w)H(f)(|w|^n).$ In particular, it can be shown that the operators $\tilde H^{(n)}$ are bounded on the space of $L^2$ even functions with operator norms $C n$ for some universal constant $C>0$ and all $n\in\mathbb{N}$.  We also need the Hardy-type inequalities \eqref{eq:Hardy_alpha}.
\end{enumerate}

\subsection{Organization of the paper}
In Section \ref{sec:CLM} we discuss self-similar blow-up solutions for the CLM equation and how blow-up for CLM is generically self-similar. We discuss two cases therein: smooth self-similar profiles and merely H\"older continuous self similar profiles.  In Section \ref{sec:smooth}, we prove that the self-similar profiles for the CLM equation in the smooth case can be continued to self-similar blow-up profiles for the Okamoto-Sakajo-Wunsch equation with $|a|$ small which is the content of Theorem \ref{thm:main1}.  In Section \ref{sec:Hoelder}, we do the same for the merely H\"older continuous profiles for the CLM equation but there $|a|$ can be taken to be on the order of $\alpha^{-1}$ where $\alpha$ is the H\"older continuity index of the CLM solution, which is the content of Theorem \ref{thm:main2}. Lastly, we discuss an  approach which could lead to showing finite-time singularity formation for the De Gregorio system on the circle in Section \ref{sec:circle}. Several useful identities regarding the Hilbert transform and some functional inequalities are collected in the Appendix.

\subsubsection*{Notation} As it is usual, we use letters $C, c$, and so on to denote absolute constants whose values may vary from a line to another. Moreover, we write $X \lesssim Y$ when there is some absolute constant $c > 0$ with $X \le cY$. When a constant or a function depends on a parameter, we write out the dependence using either a subscript or a superscript. In particular, for $0 < \alpha < 1$, the expression $F^{(\alpha)}_0$ is reserved for the odd and self-similar profile for the Constantin-Lax-Majda equation; see \eqref{eq:F_alpha}. In Section \ref{sec:Hoelder}, we use tildes to denote the transform which acts on functions on $\mathbb{R}^+$ defined via \begin{equation*}
\begin{split}
\tilde{f}(w) := f(w^{\frac{1}{\alpha}})
\end{split}
\end{equation*} with some given value of $0 < \alpha < 1$. 

\section{Self-similar solutions for Constantin-Lax-Majda: Two Cases}\label{sec:CLM}

In this section we completely characterize the blow-up profile for solutions to the Constantin-Lax-Majda equation which are odd and positive on $(0,\infty)$. It shouldn't be too surprising that we are able to get a full picture of blow-up phenomena for the CLM equation since it can be solved exactly. It turns out that the nature of the blow-up depends only upon the behavior of $\omega_0$ near $x=0$. Moreover, the blow-up is of self-similar type with different scaling rates depending upon whether $\omega_0'(0)=0,$ $0<\omega_0'(0)<\infty,$ or $\omega_0'(0)=+\infty$. By "of self-similar type" we mean that the solution consists of purely self similar function plus a function which is uniformly bounded up to the time of blow-up. We now go into more detail on only the second and third cases since we shall not need the third case. Recall the solution formula for the Constantin-Lax-Majda equation:
$$\omega(x,t)= \frac{\omega_0(x)}{(1+tH\omega_0(x))^2+t^2\omega_0(x)^2}.$$

\subsection{Case 1: $0<\omega_0'(0)<\infty$}

Let's first consider the case where $\omega_0'(0)=1$. Also assume\footnote{We remark that $H\omega_0(0)=-\frac{1}{\pi}\int_{-\infty}^\infty\frac{\omega_0(x)}{x}dx$ so that $H\omega_0(0)\leq 0$ and there is equality if and only if $\omega_0\equiv 0$. Using scaling and multiplication by a constant, it actually suffices to assume both $\omega_0'(0)=1$ and $H\omega_0(0)=-1$.} that $H\omega_0(0)=-1$. Then, $$\omega(x,t)=  \frac{x}{(1+tH\omega_0(x))^2+t^2\omega_0(x)^2}+\frac{\omega_0(x)-x}{(1+tH\omega_0(x))^2+t^2\omega_0(x)^2}.$$
Notice that the second term is uniformly bounded for $\frac{1}{2}\leq t\leq 1$ since $$|\omega_0(x)-x|\lesssim |x|^3$$ and $$|\omega_0(x)|^2\geq \frac{1}{2}|x|^2.$$
Hence, $$\omega(x,t)\approx  \frac{x}{(1+tH\omega_0(x))^2+t^2\omega_0(x)^2},$$ where the $\approx$ sign means equal up to a term uniformly bounded up to $t=1$. Now, $$|H\omega_0(x)+1|\lesssim |x|^2$$ since $H\omega_0$ is even. Hence, 
$$\omega(x,t)\approx\frac{x}{(1-t)^2+t^2\omega_0(x)^2}+ \Big[ \frac{x}{(1+tH\omega_0(x))^2+t^2\omega_0(x)^2}-\frac{x}{(1-t)^2+t^2\omega_0(x)^2}\Big]$$
$$ \frac{x}{(1+tH\omega_0(x))^2+t^2\omega_0(x)^2}-\frac{x}{(1-t)^2+t^2\omega_0(x)^2}=\frac{x [(1-t)^2-(1+tH\omega_0(x))^2]}{[(1-t)^2+t^2\omega_0(x)^2][(1+tH\omega_0(x))^2+t^2\omega_0(x)^2]}$$
Now, $$ |(1-t)^2-(1+tH\omega_0(x))^2|=|t(1+H\omega_0(x))(2-t+tH\omega_0(x))|\lesssim (1-t)x^2+|x|^4$$
Therefore, $$\omega(x,t)\approx\frac{x}{(1-t)^2+t^2\omega_0(x)^2}.$$
Finally, $$\omega(x,t)\approx\frac{x}{(1-t)^2+t^2x^2}=\frac{1}{1-t}F(\frac{x}{1-t})$$ with $F(z)=\frac{z}{z^2+1}.$
As mentioned in the footnote, a simple scaling argument will then imply that whenever $0<\omega_0'(0)<\infty,$ and $\omega_0\in C^2,$ $\omega$ is asymptotically equal to a self similar profile. In fact, this is the only such self-similar profile.

It is straightforward to check that with the self-similar ansatz \begin{equation*}
\begin{split}
\omega(t,x) = \frac{1}{1-t} F_0\left(\frac{x}{1-t}\right),
\end{split}
\end{equation*} the Constantin-Lax-Madja equation \eqref{eq:CLM} reduces to the following: 
\begin{equation}\label{eq:CLM_resc}
\begin{split}
F_0+zF_0'(z)+2F_0H(F_0) = 0. 
\end{split}
\end{equation} We have already seen that $F_0(z) = Cz/(1+C^2z^2)$ is a solution to this equation. We also have:
\begin{proposition}
	Assuming that the profile $F_0(z)$ in \eqref{eq:CLM_resc} is odd, smooth, and decays at infinity, we have \begin{equation*}
	\begin{split}
	F_0(z) = \frac{z}{1+z^2},
	\end{split}
	\end{equation*} up to a scaling of the variable $z$. 
\end{proposition}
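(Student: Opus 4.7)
The plan is to convert the real profile equation into a complex first-order ODE by introducing $G(z) := F_0(z) + iHF_0(z)$, which under the smoothness and decay hypotheses extends holomorphically to the upper half-plane. First I would apply $H$ to the equation $F_0 + zF_0' + 2F_0 HF_0 = 0$. Two identities are needed: the Tricomi identity gives $2H(F_0 HF_0) = (HF_0)^2 - F_0^2$, and since $F_0$ is odd and decaying we have $\int_{\mathbb{R}} F_0'\,dy = 0$, so $H(zF_0') = z(HF_0)'$ with no boundary correction. This yields the companion equation $HF_0 + z(HF_0)' + (HF_0)^2 - F_0^2 = 0$. Multiplying by $i$ and adding to the original, and using $G^2 = F_0^2 - (HF_0)^2 + 2iF_0 HF_0$ so that $-iG^2 = 2F_0 HF_0 - i(F_0^2 - (HF_0)^2)$, I obtain the clean scalar ODE
\begin{equation*}
G + zG' = iG^2.
\end{equation*}

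This equation is separable and can be integrated explicitly. Writing $dG/(iG^2 - G) = dz/z$ and using the partial-fraction decomposition $1/(G(iG-1)) = -1/G + i/(iG-1)$, I arrive at $(iG - 1)/G = Kz$ for a complex integration constant $K$, i.e.\
\begin{equation*}
G(z) = \frac{1}{i - Kz}.
\end{equation*}
The constant $K$ is then pinned down by symmetry and holomorphicity. Because $F_0$ is real and odd while $HF_0$ is real and even, $G$ satisfies $G(-z) = -\overline{G(z)}$; substituting the explicit formula forces $K \in \mathbb{R}$. Next, $G$ must be holomorphic on the open upper half-plane, so the pole at $z = i/K$ must lie in the lower half-plane, forcing $K < 0$. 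Writing $K = -c$ with $c > 0$ and rationalizing,
\begin{equation*}
G(z) = \frac{1}{i + cz} = \frac{cz - i}{1 + c^2 z^2},
\end{equation*}
from which $F_0(z) = \mathrm{Re}\,G(z) = cz/(1+c^2 z^2)$. This is precisely $z/(1+z^2)$ after the rescaling $z \mapsto cz$. The degenerate case $K = 0$ gives $G \equiv -i$, forcing $F_0 \equiv 0$ and $HF_0 \equiv -1$, which is incompatible with decay and is discarded.

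The main obstacle, as I see it, is not the algebra but the careful justification of the analytic manipulations: verifying that $G$ really extends holomorphically to the upper half-plane (using enough decay of $F_0$ so that $HF_0$ is a well-defined, decaying smooth function), confirming that $H$ commutes with $\partial_z$ in our class, and checking that no spurious branch is lost when the separation-of-variables step is phrased in the complex plane with pole avoidance. Oddness of $F_0$ is used three times in an essential way: it kills the boundary integral arising from $H(zF_0')$, it provides the conjugation symmetry $G(-z) = -\overline{G(z)}$ that selects the real one-parameter family, and combined with decay it rules out the constant solution $G \equiv -i$.
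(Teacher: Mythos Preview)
Your proof is correct and follows essentially the same approach as the paper: pass to the complex-valued function $G = F_0 + iHF_0$, combine the equation with its Hilbert transform into the scalar ODE $G + zG' = iG^2$, integrate to get $G = 1/(i + Cz)$, and use the odd/even symmetry of the real and imaginary parts to force $C \in \mathbb{R}$. One small quibble: the vanishing of $\int_{\mathbb{R}} F_0'\,dy$ in the identity $H(zF_0') = z(HF_0)'$ comes from decay alone (fundamental theorem of calculus), not from oddness, so your list of ``three essential uses of oddness'' overcounts by one; your additional holomorphicity argument pinning down the sign of the constant is a nice extra that the paper omits, since ``up to scaling'' already absorbs it.
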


\begin{proof}
	Taking the Hilbert transform of both sides of the equation \eqref{eq:CLM_resc}, we see:
	\begin{equation}\label{eq:CLM_resc_H}
	\begin{split}
	H(F_0)+zH(F_0)' - F_0^2  + H(F_0)^2 = 0 .
	\end{split}
	\end{equation}
	Now let $F_0+iH(F_0)=V.$
	Then, since
	$$2F_0H(F_0)+i(H(F_0)^2-F^2)=-i(F_0+iH(F_0))^2=-iV^2,$$ the system of equations \eqref{eq:CLM_resc}, \eqref{eq:CLM_resc_H} can be rewritten as $$V+zV'- iV^2 = 0 .$$ Integrating the system gives $$V=\frac{1}{i+Cz},$$ and it is easy to see that the real and imaginary parts of $V$ are odd and even respectively precisely when $C$ is a real constant. 
	Thus, $$V=\frac{Cz}{1+C^2z^2}+\frac{i}{1+C^2z^2}$$ and in particular, $$F_0=\frac{Cz}{1+C^2z^2}.$$
\end{proof}

%\subsection{Case 2: $\omega_0'(0)=0$}

%A first case to consider the case\footnote{If we assume that $\omega_0$ is analytic, it suffices to consider $\omega_0^{(n)}(0)=1$ for some odd $n\in\mathbb{N}$ and $\omega_0^{(k)}(0)=0$ for all $k<n$. Since we won't use this case very much in the rest of the paper, we will restrict ourselves to the case $\omega_0'''(0)=6$.} where $\omega_0'(0)=0$ and $\omega_0''(0)=2$. In this case, $$\omega(x,t)= \frac{\omega_0(x)}{(1+tH\omega_0(x))^2+t^2\omega_0(x)^2}.$$
%We know that $|\omega_0(x)-x^3|\lesssim x^5.$

\subsection{Case 2: $\omega_0'(0)=+\infty$}

Let's assume that $\omega_0(x)=\mathrm{sgn}(x)|x|^\alpha \Omega_1(x)$ for some $0<\alpha<1$ and for some $\Omega_1$ a sufficiently smooth rapidly decaying even function with $\Omega_1(0)=1$ and $\Omega_1>0$ on $\mathbb{R}$. In this case, it is clear that $H\omega_0(x)=C+ |x|^\alpha\Omega_2(x)$ with $C$ some constant and $\Omega_2(x)$ a smooth even decaying function on $\mathbb{R}.$ We actually need the following interesting lemma: 
\begin{lemma}\label{lem:consistency_alpha}
Suppose $g(z)=\mathrm{sgn}(z) k(|z|^\alpha)$ with $k$ smooth and $k(0)=0$. Then, $$\lim_{z\rightarrow 0}\frac{1}{|z|^\alpha}[H(g)(z)-H(g)(0)]=\cot\left(\frac{\alpha\pi}{2}\right) k'(0).$$
\end{lemma}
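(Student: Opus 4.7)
My plan is to reduce the problem to a classical Mellin integral by exploiting the oddness of $g$ and a self-similar rescaling of the variable of integration. Since $g$ is odd (the product of the odd $\mathrm{sgn}$ with the even $k(|z|^\alpha)$), I would first fold the defining principal-value integral onto $(0,\infty)$ to obtain, for $z>0$,
\begin{equation*}
H(g)(z) = \frac{2}{\pi} \, \mathrm{p.v.} \int_0^\infty \frac{y\, k(y^\alpha)}{z^2-y^2} \, dy, \qquad H(g)(0) = -\frac{2}{\pi} \int_0^\infty \frac{k(y^\alpha)}{y}\, dy,
\end{equation*}
where the two integrals are convergent (with mild decay of $k$, or after a smooth truncation whose effect is analyzed below). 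Combining them gives the clean formula
\begin{equation*}
H(g)(z) - H(g)(0) \;=\; \frac{2z^2}{\pi} \, \mathrm{p.v.} \int_0^\infty \frac{k(y^\alpha)}{y\,(z^2-y^2)}\, dy, \qquad z > 0.
\end{equation*}
The case $z<0$ is automatic because $H(g)$ is even (as $g$ is odd) and $|z|^\alpha$ is even.

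Next I would substitute $y = zs$, which extracts exactly the expected power of $z$:
\begin{equation*}
\frac{H(g)(z) - H(g)(0)}{z^\alpha} \;=\; \frac{2}{\pi} \, \mathrm{p.v.} \int_0^\infty \frac{k(z^\alpha s^\alpha)/z^\alpha}{s(1-s^2)}\, ds.
\end{equation*}
From $k(0)=0$ and smoothness of $k$, Taylor expansion gives $k(z^\alpha s^\alpha)/z^\alpha \to k'(0)\, s^\alpha$ pointwise (and uniformly on compact subsets of $(0,\infty)$) as $z\to 0^+$, with error of order $z^\alpha s^{2\alpha}$. Once the limit is justified under the integral, we obtain
\begin{equation*}
\lim_{z\to 0^+} \frac{H(g)(z)-H(g)(0)}{z^\alpha} \;=\; \frac{2 k'(0)}{\pi} \, \mathrm{p.v.} \int_0^\infty \frac{s^{\alpha-1}}{1-s^2}\, ds.
\end{equation*}
The substitution $t = s^2$ converts the last integral into $\tfrac{1}{2}\,\mathrm{p.v.} \int_0^\infty t^{\alpha/2-1}/(1-t)\, dt$, and the classical Mellin identity $\mathrm{p.v.}\int_0^\infty t^{\beta-1}/(1-t)\, dt = \pi \cot(\pi\beta)$ (valid for $0<\beta<1$), applied with $\beta = \alpha/2$, produces precisely $k'(0)\cot(\alpha\pi/2)$, as claimed.

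The main obstacle is justifying the interchange of the limit $z\to 0^+$ with the principal-value integral at $s=1$. I would handle this by a one-term Taylor expansion of $k(z^\alpha s^\alpha)/z^\alpha$ in $s$ around $s=1$, which yields Hölder continuity in $s$ with constant uniform in small $z$; this is enough for the principal value at $s=1$ to pass to the limit. The tails at $s\to 0$ and $s\to\infty$ are controlled by the uniform bound $|k(z^\alpha s^\alpha)/z^\alpha|\lesssim s^\alpha$ (for $zs\le 1$) coming from $k(0)=0$, together with decay or boundedness of $k$ at infinity; in fact one may freely cut $k$ off outside a neighborhood of the origin, since modifying $g$ away from $0$ contributes only a smooth $O(z)$ correction to $H(g)(z)-H(g)(0)$, which is $o(|z|^\alpha)$ and therefore irrelevant for the limit. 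With these ingredients, dominated convergence applied on $(0,\tfrac{1}{2})$, $(\tfrac{1}{2},\tfrac{3}{2})$ and $(\tfrac{3}{2},\infty)$ separately completes the argument.
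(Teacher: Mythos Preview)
Your proof is correct and follows essentially the same approach as the paper: subtract $H(g)(0)$, factor out a power of the base point, rescale the integration variable, and evaluate the resulting Mellin-type integral $\int_0^\infty t^{\alpha/2-1}/(1-t)\,dt = \pi\cot(\alpha\pi/2)$. The only cosmetic difference is that you fold onto $(0,\infty)$ using oddness before rescaling, whereas the paper keeps the integral over $\mathbb{R}$ and evaluates $\int_{-\infty}^\infty \frac{dz}{(1-z)|z|^{1-\alpha}}$ directly; you also supply more careful justification for passing the limit through the principal value than the paper does.
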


\begin{proof} Assume $0<x<0.1$. Note that 
\begin{equation*}
\begin{split}
&\int_{-\infty}^{\infty} \frac{\mathrm{sgn}(z)k(|z|^\alpha)}{x-z}dz-\int_{-\infty}^{\infty} \frac{\mathrm{sgn}(z)k(|z|^\alpha)}{z}dz  = x\int_{-\infty}^\infty \frac{\mathrm{sgn}(z)k(|z|^\alpha)}{(x-z)z}dz.
\end{split}
\end{equation*} Since we are only concerned about the limit as $z\rightarrow 0$ of this quantity divided by $x^\alpha$, we can cut out the large $z$ part. So we see that the quantity we want is equal to $$\lim_{x\rightarrow 0}\frac{1}{x^\alpha}x\int_{-1}^{1} \frac{\mathrm{sgn}(z)|z|^\alpha}{(x-z)z}dz.$$ After a rescaling, it is equal to $$\lim_{x\rightarrow 0}p.v.\int_{-1/x}^{1/x}\frac{dz}{(1-z)|z|^{1-\alpha}}=p.v.\int_{-\infty}^\infty\frac{1}{(1-z)|z|^{1-\alpha}}dz. $$
A direct numerical check gives us that $$p.v.\int_{-\infty}^\infty \frac{1}{(1-z)|z|^{1-\alpha}}dz=\pi\cot\left(\frac{\alpha\pi}{2}\right),$$ and therefore multiplying by $\frac{1}{\pi}$, we are done. 
\end{proof}
Using the lemma, we see that $\Omega_2(0)=\cot(\frac{\alpha\pi}{2}).$
It is clear from the formula
$$\omega(x,t)= \frac{\omega_0(x)}{(1+tH\omega_0(x))^2+t^2\omega_0(x)^2}$$
that singularity formation happens at $x=0$ and $t=-C^{-1}.$ Note that if $\Omega_1$ is fixed, $C\approx -\frac{1}{\alpha}$ as $\alpha\rightarrow 0$. This can be seen by direct computation or by observing that $H(\mathrm{sgn}(x))$ has a logarithmic singularity at the origin. 
Using arguments similar to the above, we see:

$$\omega(x,t)\approx \frac{\mathrm{sgn}(x)|x|^\alpha\Omega_1(0)}{(1-C^{-1}(C+|x|^\alpha\Omega_2(0)))^2+C^{-2}|x|^{2\alpha}\Omega_1(0)^2}.$$
Note that $\Omega_1(0)=1$ and $\Omega_2(0)=\cot(\frac{\alpha\pi}{2})$. 
$$\omega(x,t)\approx \frac{\mathrm{sgn}(x)|x|^\alpha}{(1+t(C+|x|^\alpha \cot(\frac{\alpha\pi}{2})))^2+t^2|x|^{2\alpha}}.$$
$$=\frac{\mathrm{sgn}(x)|x|^\alpha}{(1+tC)^2+2(1+tC)|x|^\alpha\cot(\frac{\alpha\pi}{2})+t^2|x|^{2\alpha}\csc^2(\frac{\alpha\pi}{2})}$$
To make things easier, let's assume $C=-1$. 
So, $$\omega(x,t)\approx \frac{\mathrm{sgn}(x)|x|^\alpha}{(1-t)^2+2(1-t)|x|^\alpha\cot(\frac{\alpha\pi}{2})+|x|^{2\alpha}\csc^2(\frac{\alpha\pi}{2})}$$
Hence, we see that the behavior near the time of singularity $(t=1)$ is self similar:

$$\omega(x,t)\approx \frac{1}{1-t} F(\frac{x}{(1-t)^\frac{1}{\alpha}})$$ with $$F(z)=\frac{\mathrm{sgn}(z)|z|^\alpha}{1+2|z|^\alpha\cot(\frac{\alpha\pi}{2})+|z|^{2\alpha}\csc^2(\frac{\alpha\pi}{2})}.$$ This coincides with the exact self-similar solution \eqref{eq:F_alpha} up to a multiplicative constant and a term of order $|z|^{2\alpha}$. 

\section{Self-similar blow-up for smooth data}\label{sec:smooth}

In this section we prove Theorem \ref{thm:main1}. In particular, we consider the Okamoto-Sakajo-Wunsch equation and prove the existence of self-similar solutions which become singular in finite time for $|a|$ small enough. This is done using a continuation argument since we already have such a solution in the case when $a=0.$ This cannot actually be done directly since normally this requires invertibility of a certain linear operator and we will find the operator to be neither one-to-one nor onto. This will be fixed by introducing another parameter $\lambda(a)$ which changes the scaling of the self-similar solution.  Once this $\lambda(a)$ is chosen properly, the relevant linear operator becomes onto and once we make a further restriction the linear operator becomes invertible. Introducing this parameter $\lambda(a)$ is likely \emph{necessary} to construct smooth solutions of the Okamoto-Sakajo-Wunsch equation because, as is alluded to in the corollaries to Theorem \ref{thm:main1}, this seems to be related to the $C^{\frac{1}{2}}$ conjecture for the case $a=-2$. Namely, it seems to be that the velocity fields associated to solutions of the Okamoto-Sakajo-Wunsch equation with $a<0$ satisfy a uniform $C^{\alpha(a)}$ estimate for some $\alpha(a)>0$.  

\subsection{Self-similar solutions for small values of $a$}

This time, we consider the following ansatz \begin{equation*}
\begin{split}
\omega(t,x) = \frac{1}{1-t} F\left( \frac{x}{(1-t)^{1+\lambda(a)}} \right),\qquad \lambda(0) = 0
\end{split}
\end{equation*} for the system \eqref{eq:DeG}, where $F$ is an odd and smooth function which decays at infinity. Then, in terms of $F$, \eqref{eq:DeG} takes the following form: \begin{equation}\label{eq:DeG_resc}
\begin{split}
F(z) + \left( (1+\lambda(a))z - a \Lambda^{-1}(F)(z) \right) F'(z) + 2F(z) HF(z) = 0. 
\end{split}
\end{equation} Our previous computations show that when $a = 0$, then $F = F_0$ (with $\lambda(0) = 0$) solves \eqref{eq:DeG_resc}. 

We consider the following expansions in $a$ around $(F_0,0)$: \begin{equation}\label{eq:expansion}
\begin{split}
F(z) &= F_0(z) + \sum_{n=1}^\infty a^n F_n(z), \qquad \lambda(a) = \sum_{n=1}^\infty a^n \lambda_n
\end{split}
\end{equation} where $F_n$ and $\lambda_n$ for each $n \ge 1$ are to be determined via a simple iteration scheme. 

It turns out that  $F_n$ are $C^\infty(\mathbb{R})$ functions with suitable decay at infinity. For simplicity, we will just work in the space $H^3(\mathbb{R})$:\begin{equation*}
\begin{split}
\V f \V_{H^3(\mathbb{R})}^2 := \sum_{k=0}^3 \V \partial_z^k f\V_{L^2(\mathbb{R})}^2,
\end{split}
\end{equation*} and show that the series \eqref{eq:expansion} converges in $H^3(\mathbb{R})$. Note that for $f \in H^3$, the expressions such as $f'(0)$, $f''(0)$, and $Hf'(0)$ make sense, and in particular $F$ will solve \eqref{eq:DeG_resc} pointwise, once we show that the series for $F$ converges in $H^3$. 

Let us briefly outline how $F_n$ and $\lambda_n$ are going to be determined, given $F_1, \cdots, F_{n-1}$ and $\lambda_1,\cdots,\lambda_{n-1}$. We shall work with a linear operator $L$ defined on the space of odd functions, which has a one-dimensional kernel and the  image of codimension 1. Then at each step of the iteration, we will be able to determine $F_n$ by inverting $L$ in an equation of the form \begin{equation*}
\begin{split}
L(F_n)(z) = G_n(z) - \lambda_n z F_0'(z),
\end{split}
\end{equation*} with some $G_n  = G_n(F_1,\cdots,F_{n-1};\lambda_1,\cdots,\lambda_{n-1})$ where $\lambda_n$ is the unique number which gives a solution $F_n$, up to a one-dimensional kernel. Any non-trivial element in the kernel has a nonzero derivative at the origin, and therefore we may define $L^{-1}$ on the image of $L$ by forcing the derivative of $F_n$ to be zero at the origin. Then we proceed to show that, using simple norm estimates, both series in \eqref{eq:expansion} are convergent. 

\subsubsection{The linear operator and its inverse}

Let us insert the expansion \eqref{eq:expansion} directly into \eqref{eq:DeG_resc}. Then one obtains the sequence of identities, for each power of $a$: \begin{equation}\label{eq:seq_equations}
\begin{split}
F_n + zF_n' +2HF_0 F_n + 2F_0 HF_n = \sum_{j=0}^{n-1} \Lambda^{-1}({F_j}) F'_{n-1-j} - z\sum_{j=1}^n   \lambda_j F'_{n-j} - 2\sum_{j=1}^{n-1} F_j HF_{n-j},
\end{split}
\end{equation} for each $n \ge 1$. We note that, by considering the linear operator $L$ defined by \begin{equation}\label{eq:Lin_sys}
\begin{split}
Lf := f + zf' + 2HF_0 f + 2F_0 Hf = f + zf' - \frac{2}{1+z^2} f + \frac{2z}{1+z^2} Hf , 
\end{split}
\end{equation} the above set of equations can be rewritten as \begin{equation}\label{eq:n_th}
\begin{split}
L(F_n) = G_n(z) - \lambda_n  zF_0'(z) = G_n(z) - \lambda_n \frac{z(1-z^2)}{(1+z^2)^2}
\end{split}
\end{equation} where $G_n = G_n(F_1,\cdots,F_{n-1};\lambda_1,\cdots,\lambda_{n-1})$ is simply the right hand side of \eqref{eq:seq_equations} without the term involving $\lambda_n$. The following lemma will guarantee that we can always pick $\lambda_n$ in a (unique) way that \eqref{eq:n_th} is solvable. Then, we can define \begin{equation*}
\begin{split}
F_n := L^{-1}\left(  G_n(z) - \lambda_n \frac{z(1-z^2)}{(1+z^2)^2} \right)
\end{split}
\end{equation*}	by an explicit integral formula; see below \eqref{eq:inverse_rep}. 

The purpose of the following lemma is mainly to extract a representation formula for the $L^{-1}$, supposing that the right hand side is sufficiently smooth and decays  fast at infinity. Then in the next lemma, we carry out precise norm estimates for $F_n$. 

\begin{lemma}
	Consider the linear equation \begin{equation*}
	\begin{split}
	Lf = g
	\end{split}
	\end{equation*} where $g \in C^\infty_c(\mathbb{R})$ is a given odd function. Then a solution $f \in H^3(\mathbb{R})$ (which is necessarily odd) exists if and only if $g'(0) + 2Hg(0) = 0$ and once we further require that $f'(0) = 0$, it is uniquely determined by the expression \begin{equation}\label{eq:inverse_rep}
	\begin{split}
	(L^{-1}g)(z) &= \frac{z(1-z^2)}{(1+z^2)^2} \cdot \int_0^z \frac{1-w^2}{w} \hat{g}(w) + 2\hat{h}(w) dw \\
	&\qquad + \frac{-2z^2}{(1+z^2)^2} \cdot \int_0^z -2\hat{g}(w) + \frac{1-w^2}{w} \hat{h}(w)dw, 
	\end{split}
	\end{equation} with \begin{equation*}
	\begin{split}
	\hat{g}(z)&:= \frac{g(z)}{z} - \frac{1}{1+z^2}g'(0), \qquad	\hat{h}(z):= \frac{Hg(z) - Hg(0)}{z} + \frac{2z}{1+z^2}Hg(0).
	\end{split}
	\end{equation*}
\end{lemma}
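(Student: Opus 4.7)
My plan is to exploit the complex-variable structure: the non-local operator $L$ becomes local on $V := f + iHf$. Since $F_0 + iHF_0 = 1/(z+i)$, a short computation gives
\[
Lf + iL(Hf) \;=\; V + zV' - \frac{2i}{z+i}V \;=:\; \tilde{L}V,
\]
so the equation $Lf = g$ is equivalent to $\tilde L V = G := g + iHg$. I first establish necessity of the compatibility condition by evaluating at the origin: with $f(0)=0$ (oddness of $f\in H^3$), $F_0(0)=0$, $F_0'(0)=1$, $HF_0(0)=-1$, and $(HF_0)'(0)=0$ (since $HF_0$ is even), differentiating $Lf = g$ at zero yields $(Lf)'(0) = 2Hf(0) = g'(0)$, while taking the imaginary part of the complex ODE at zero gives $L(Hf)(0) = -Hf(0) = Hg(0)$; adding produces $g'(0) + 2Hg(0) = 0$.

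For sufficiency I solve $\tilde L V = G$ via the integrating factor $\mu(z) = (z+i)^2/z$, obtaining the homogeneous solution $V_{\mathrm{hom}}(z) = z/(z+i)^2$, which is analytic in the upper half-plane and decomposes as $V_{\mathrm{hom}} = -K + iM$ with $K(z) = z(1-z^2)/(1+z^2)^2 = zF_0'(z)$ and $M(z) = -2z^2/(1+z^2)^2$. In particular $K$ spans the kernel of $L$ on odd functions (and $M$ on even functions). Variation of parameters then gives $V = V_{\mathrm{hom}} U + CV_{\mathrm{hom}}$ with $U'(w) = (w+i)^2 G(w)/w^2$, and this integrand naively carries both $1/w^2$ and $1/w$ singularities at $w=0$. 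The key algebraic identity
\[
G(w) \;=\; w\bigl(\hat{g}(w) + i\hat{h}(w)\bigr) \;+\; \frac{ig'(0)(w-i)}{2(w+i)}
\]
(which essentially defines the modifications $\hat{g},\hat{h}$ in the statement) uses compatibility to cancel the $1/w$ term and render $\hat{g} + i\hat{h}$ regular at zero; the first summand above then contributes the convergent integrals $A(z), B(z)$ of the lemma, while the second integrates against $V_{\mathrm{hom}}$ in closed form to the explicit correction $ig'(0)(z^2-1)/[2(z+i)^2]$. Choosing the free constant $C$ so that $\mathrm{Re}\,V = KA + MB$ then yields the stated formula.

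To verify $Lf = g$ directly from $f = KA + MB$ I use $LK = LM = 0$ to compute
\[
f + zf' \;=\; -2HF_0\,f + 2F_0(MA - KB) + z(KA' + MB').
\]
The algebraic identity $(1-z^2)^2 + 4z^2 = (1+z^2)^2$ collapses $KA' + MB'$ to precisely $\hat{g}$, whence $z(KA' + MB') = g - g'(0)F_0$. Combined with the identity $Hf = KB - MA + g'(0)/2$ — which is the statement that the complex combination $V = -V_{\mathrm{hom}}(A+iB) + ig'(0)/2$ extends analytically to the upper half-plane — substitution produces $Lf = g'(0)F_0 + g - g'(0)F_0 = g$. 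The normalization $f(0) = f'(0) = 0$ is automatic since $K, M, A, B$ all vanish at the origin, and uniqueness follows from $K'(0) = 1$ together with the one-dimensionality of $\ker L$ on odd $H^3$.

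I expect the principal difficulty to be the singular-integral analysis: one must use compatibility once at the origin to kill the $1/w$ divergence, integrate the surviving $1/w^2$ piece in closed form via the correction term, and reuse compatibility at infinity to ensure $f \in H^3$. The latter is the more subtle point, since $B(z)\sim -Hg(0)\,z$ grows linearly; it is cleanest to verify through the complex formulation, where one checks that $V(z)\to 0$ at infinity precisely when $g'(0)+2Hg(0)=0$, after which $f \in L^2$ and the higher $H^3$ regularity is routine from the smoothness and decay of $g$.
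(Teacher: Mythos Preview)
Your approach is essentially the same as the paper's: both pass to the complex variable $V = f + iHf$, exploit $F_0 + iHF_0 = 1/(z+i)$ to reduce $Lf=g$ to a first-order linear ODE, extract the compatibility condition $g'(0)+2Hg(0)=0$ from the behavior at the origin, integrate explicitly via the integrating factor $(z+i)^2/z$, and appeal to holomorphicity in the upper half-plane to conclude that the imaginary part of the resulting $V$ is indeed $Hf$. Your bookkeeping differs slightly --- you keep the ODE in terms of $V$ and isolate the explicit correction $\tfrac{i g'(0)(w-i)}{2(w+i)}$ via an algebraic identity, whereas the paper first shifts to $\hat d = Hf - Hf(0)$ and works with $U-U(0)$ --- but these are equivalent computations leading to the same formula; your added direct verification $L(KA+MB)=g$ and the remark on the linear growth of $B$ (compensated by the $z^{-2}$ decay of $M$) are nice touches the paper defers to its subsequent $H^3$ estimates.
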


\begin{proof}
	Assuming that we are given a solution $f \in H^3(\mathbb{R})$ to $Lf = g$, one may take the Hilbert transform of both sides to obtain \begin{equation}\label{eq:Lin_sys_H}
	\begin{split}
	Hf + z (Hf)' - \frac{2z}{1+z^2} f - \frac{2}{1+z^2} Hf = Hg,
	\end{split}
	\end{equation} where we have used the Tricomi identity. Setting $d = Hf$ for simplicity, we observe that a solution $f$ of $Lf = g$ must solve the following linear system of ODEs \begin{equation}\label{eq:Lin_sys_vectorform}
	\begin{split}
	\begin{pmatrix}
	f \\ d
	\end{pmatrix} + z \begin{pmatrix}
	f \\ d
	\end{pmatrix}' + \frac{2}{1+z^2} \begin{pmatrix}
	-1 & z \\
	-z & -1
	\end{pmatrix} \begin{pmatrix}
	f \\ d
	\end{pmatrix}  = \begin{pmatrix}
	g \\ h
	\end{pmatrix},
	\end{split}
	\end{equation} with $d = Hf$ and $h = Hg$. Since we are interested in the case where $f$ is odd (and therefore $d$ is necessarily even), it suffices to solve the initial value problem with given $f(0)$ and $d(0)$ on the positive reals. Due to the presence of the coefficient $z$, it is not clear whether the initial value problem is well-posed, and the condition $g'(0) + 2h(0) = 0$ comes out naturally in this setting. Indeed, evaluating both sides of the second component of \eqref{eq:Lin_sys_vectorform} at $z = 0$, one obtains \begin{equation*}
	\begin{split}
	d(0) - 2 d(0) = h(0),
	\end{split}
	\end{equation*} while dividing  both sides of the first component of \eqref{eq:Lin_sys_vectorform} by $z$ and taking the limit $z \rightarrow 0^+$, \begin{equation*}
	\begin{split}
	0 = \lim_{z \rightarrow 0^+} \left(	\frac{f(z)}{z} + f'(z) - \frac{2}{1+z^2} \frac{f(z)}{z} +\frac{2}{1+z^2} d(z) - \frac{g(z)}{z} \right) \rightarrow 2d(0) - g'(0). 
	\end{split}
	\end{equation*} Therefore we need \begin{equation*}
	\begin{split}
	d(0) = - h(0) = \frac{1}{2} g'(0)
	\end{split}
	\end{equation*} for the ODE \eqref{eq:Lin_sys_vectorform} to have a $C^1$ solution on $[0,\infty)$. In particular we see that the system \eqref{eq:Lin_sys_vectorform} selects a unique pair of initial conditions $(f(0),d(0)) = (0,-h(0)) = (0,g'(0)/2)$ which has a chance of having a smooth solution. 
	
	Introducing for convenience $\hat{d}(z) := d(z) - d(0)$ and using the above condition, one can simply rewrite the system as \begin{equation}\label{eq:Linsys_rewrite}
	\begin{split}
	\begin{pmatrix}
	f \\ \hat{d}
	\end{pmatrix}' + \frac{1}{z(1+z^2)} \begin{pmatrix}
	-(1-z^2) & 2z \\
	-2z & -(1-z^2) 
	\end{pmatrix} \begin{pmatrix}
	f \\ \hat{d}
	\end{pmatrix} = \begin{pmatrix}
	\frac{g(z)}{z} - \frac{1}{1+z^2}g'(0) \\ \frac{h(z)-h(0)}{z} + \frac{2z}{1+z^2} h(0)
	\end{pmatrix}.
	\end{split}
	\end{equation} Using complex notation \begin{equation*}
	\begin{split}
	V(z) = F_0(z) + i H(F_0)(z) = \frac{z-i}{1+z^2} = \frac{1}{z+i},
	\end{split}
	\end{equation*} \begin{equation*}
	\begin{split}
	U(z) = f(z) + i d(z),
	\end{split}
	\end{equation*} and \begin{equation*}
	\begin{split}
	\hat{G}(z) = \hat{g}(z) + i \hat{h}(z),
	\end{split}
	\end{equation*} the above vector system can be simply re-written as \begin{equation*}
	\begin{split}
	-\frac{1}{z} \cdot \frac{1+iz}{1-iz} (U(z)-U(0)) + (U(z) - U(0))' = \frac{\hat{G}(z)}{z}.
	\end{split}
	\end{equation*} With this complex notation, it is easy to directly integrate \eqref{eq:Linsys_rewrite} to obtain the formula \begin{equation}\label{eq:complex_form}
	\begin{split}
	U(z) = U(0) + \frac{z(-2z + i(1-z^2))}{(1+z^2)^2}\cdot \left( c +
	\int_0^z (-2 - i \frac{1-w^2}{w} ) \frac{\hat{G}(w)}{w} dw \right),  
	\end{split}
	\end{equation} with some (complex) constant of integration $c $, and converting back to the real notation, this is
	 \begin{equation*}
	\begin{split}
	\begin{pmatrix}
	f \\ \hat{d} 
	\end{pmatrix} = \frac{z}{(1+z^2)^2} \begin{pmatrix}
	1-z^2 & -2z \\
	2z & 1-z^2
	\end{pmatrix}  \left( \begin{pmatrix}
	c_f \\ c_d
	\end{pmatrix} + \int_0^z \begin{pmatrix}
	(1-w^2)/w & 2 \\
	-2 & (1-w^2)/w
	\end{pmatrix} \begin{pmatrix}
	\hat{g}(w) \\ \hat{h}(w)
	\end{pmatrix} dw \right)
	\end{split}
	\end{equation*} with some constants $c_f$ and $c_d$. In the expression inside the large brackets, the integral term is of order $O(z)$ for $|z|$ small, so that $c_f$ and $c_d$ are precisely the derivatives of $f$ and $\hat{d}$ evaluated at $z = 0$. The latter is zero since $\hat{d}$ is even and, and $c_f$ is zero if we assume further that $f'(0) = 0$. In particular, this shows that the solution $f$ is unique. 
	
%	It follows that the equation $Lf = g$ has a one-dimensional kernel spanned by the function \begin{equation*}
%	\begin{split}
%	\frac{z(1-z^2)}{(1+z^2)^2},
%	\end{split}
%	\end{equation*} whose Hilbert transform equals \begin{equation*}
%	\begin{split}
%	\frac{2z^2}{(1+z^2)^2}. 
%	\end{split}
%	\end{equation*}
	
	To actually conclude that $f$ given by the above formula provides a solution of $Lf = g$,  it needs to be argued that $ Hf = d$. This follows simply by observing that the functions $V, U$, and $\hat{G}$ can be extended as holomorphic functions on the upper half-plane, and that the formula \eqref{eq:complex_form} actually defines a holomorphic function of $z$ on the upper half-plane, which has an odd real part and even imaginary part once restricted onto the real axis. 
\end{proof}

%\begin{remark}
%	When $f$ and $g$ are not necessarily odd, then the system has a solution if and only if $g'(0) + 2Hg(0) = 0$ and $-2g(0) + (Hg)'(0) = 0$ holds, and the kernel is two dimensional. 
%\end{remark}

\subsubsection{Estimates for the inverse}

	We need an estimate for the inverse $L^{-1}$ in $H^3(\mathbb{R})$. Let us begin by breaking $L^{-1}$ into pieces: for $\sigma \in \{-1,0,1\}$, define the operators for \textit{odd} functions $g \in H^2(\mathbb{R})$ via \begin{equation*}
	\begin{split}
	T^{1,\sigma}(g) &:= \frac{z(1-z^2)}{(1+z^2)^2} \int_0^z w^{\sigma} \left( \frac{g(w)}{w} - \frac{1}{1+w^2}g'(0) \right) dw  \\
	T^{2,\sigma}(g) &:= \frac{2z}{(1+z^2)^2} \int_0^z w^{\sigma} \left( \frac{g(w)}{w} - \frac{1}{1+w^2}g'(0) \right) dw 
	\end{split}
	\end{equation*} and \begin{equation*}
	\begin{split}
	S^{1,\sigma}(g) &:= \frac{z(1-z^2)}{(1+z^2)^2} \int_0^z w^{\sigma} \left( \frac{Hg(w)-Hg(0)}{w} + \frac{2w}{1+w^2}(Hg)(0) \right) dw  \\
	S^{2,\sigma}(g) &:= \frac{2z}{(1+z^2)^2} \int_0^z w^{\sigma} \left( \frac{Hg(w)-Hg(0)}{w} + \frac{2w}{1+w^2}(Hg)(0) \right) dw .
	\end{split}
	\end{equation*} Note that $L^{-1}$ in \eqref{eq:inverse_rep} is a linear combination of twelve operators $\{ T^{l,\sigma}, S^{l',\sigma'}\}$ with $l,l' \in \{ 1, 2\}$ and $\sigma,\sigma' \in \{-1,0,1\}$. 
	
	Recall that $\lambda_n$ and $F_n$ were chosen to satisfy \begin{equation*}
	\begin{split}
	F_n = L^{-1} \left( G_n(z) - \lambda_n \frac{z(1-z^2)}{(1+z^2)^2} \right),
	\end{split}
	\end{equation*} and since $G_n$ involves derivatives of $F_j$ for $j < n$, $G_n$ belongs to $H^2(\mathbb{R})$ and no better. It turns out that unfortunately the operators $T^{1,-1}$ and $S^{1,-1}$ are not bounded from $H^2(\mathbb{R})$ to $H^3(\mathbb{R})$ (although naively one would expect that these operators gain one derivative), and to actually deduce that $F_n \in H^3(\mathbb{R})$, we shall need to use the specific form of $G_n$.
	
	Let us begin with the $L^2$ estimates. 
	
	\begin{lemma}[$L^2$-bounds]\label{lem:L2}
		Assume that $f$ and $g$ are odd functions on $\mathbb{R}$. Then we have the following estimates: \begin{equation}\label{eq:L^2_I}
		\begin{split}
		\V T^{l,\sigma} g \V_{L^2(\mathbb{R})} + \V S^{l,\sigma} g \V_{L^2(\mathbb{R})} \le C \V g\V_{H^2(\mathbb{R})},
		\end{split}
		\end{equation}  \begin{equation}\label{eq:L^2_II}
		\begin{split}
		\V T^{l,\sigma} (z g') \V_{L^2(\mathbb{R})} + \V S^{l,\sigma} (zg') \V_{L^2(\mathbb{R})} \le C \V g\V_{H^2(\mathbb{R})},
		\end{split}
		\end{equation} and \begin{equation}\label{eq:L^2_III}
		\begin{split}
		\V T^{l,\sigma} (\Lambda^{-1}(f) \cdot g') \V_{L^2(\mathbb{R})} + \V S^{l,\sigma} (\Lambda^{-1}(f) \cdot  g') \V_{L^2(\mathbb{R})} \le C\V f\V_{H^1(\mathbb{R})} \cdot \V g\V_{H^2(\mathbb{R})}
		\end{split}
		\end{equation}
	\end{lemma}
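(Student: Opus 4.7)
The plan is to treat each of (I), (II), (III) by a common template. Every operator $T^{l,\sigma}(g)$ or $S^{l,\sigma}(g)$ factors as $P_l(z)\cdot\int_0^z w^\sigma \phi(w)\,dw$, where $P_1(z)=z(1-z^2)/(1+z^2)^2$, $P_2(z)=2z/(1+z^2)^2$, and $\phi$ is $\hat g$ or $\hat h$ (or the analogous object when the input is $zg'$ or $\Lambda^{-1}(f)g'$). I will bound each such product in $L^2(\mathbb{R})$ by pairing the decay of the prefactor $P_l$ with Hardy-type inequalities on the integral, after splitting into the regions $|z|\le 1$ and $|z|\ge 1$.

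As preliminary bounds I first show $\|\hat g\|_{L^2}+\|\hat h\|_{L^2}\lesssim \|g\|_{H^2}$. For $\hat g$, the classical Hardy inequality gives $\|g(w)/w\|_{L^2}\lesssim \|g'\|_{L^2}$ since $g$ is odd with $g(0)=0$, and the Sobolev embedding $H^2(\mathbb{R})\hookrightarrow C^1$ bounds $|g'(0)|$ by $\|g\|_{H^2}$; combined with $\|1/(1+w^2)\|_{L^2}<\infty$ this yields the claim. For $\hat h$, I use $(Hg)'=H(g')$ together with the same Hardy inequality applied to $Hg(w)-Hg(0)$, plus the Sobolev control $|Hg(0)|\lesssim\|g\|_{H^2}$. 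I also record the vanishing orders $\hat g(w)=O(w^2)$ and $\hat h(w)=O(w)$ as $w\to 0$, which follow from Taylor expansion and the symmetry (for $g$ odd, $g(w)/w$ is even and $H(g')(0)=0$), and are what makes the $\sigma=-1$ case work.

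For (I), on $|z|\le 1$ both prefactors satisfy $|P_l(z)|\lesssim |z|$, so Cauchy-Schwarz on the integral gives a pointwise bound of order $|z|^{\sigma+3/2}\|\phi\|_{L^2}$, which is square-integrable on $[-1,1]$ for every $\sigma\in\{-1,0,1\}$. On $|z|\ge 1$ I use $|P_1(z)|\lesssim 1/|z|$ and $|P_2(z)|\lesssim 1/|z|^3$, and I split by $\sigma$. For $\sigma=-1$: the $O(w^2)$ vanishing at the origin and the decay at infinity yield $\hat g/w,\hat h/w\in L^1(\mathbb{R})$, hence $\int_0^z w^{-1}\phi\,dw\in L^\infty$ and the product with $P_l\in L^2$ is controlled. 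For $\sigma=0$: the standard Hardy inequality $\|z^{-1}\int_0^z\phi\,dw\|_{L^2(0,\infty)}\lesssim \|\phi\|_{L^2}$ combines with $|zP_l(z)|\le C$. For $\sigma=1$: the rescaling $w=sz$ and Minkowski give $\|z^{-1}\int_0^z w\phi(w)\,dw\|_{L^2(0,\infty)}\lesssim \|w\phi\|_{L^2}$; I then bound $\|w\hat g\|_{L^2}=\|g-wg'(0)/(1+w^2)\|_{L^2}\lesssim \|g\|_{H^2}$, and for $\hat h$ use the decomposition $w\hat h(w)=(Hg(w)-Hg(0))+2w^2Hg(0)/(1+w^2)$ combined with the enhanced decay of $P_2$ (or with cancellation against the non-decaying constant piece when $l=1$).

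Estimates (II) and (III) reduce to (I) by substitution. For (II), the identity $\widehat{zg'}(w)=g'(w)-g'(0)/(1+w^2)$ has exactly the form of $\hat G$ with $G=g'$, and since $\|g'\|_{H^1}\le\|g\|_{H^2}$ the same framework carries through. For (III), the Sobolev embedding $H^1(\mathbb{R})\hookrightarrow L^\infty$ and the fact that $\Lambda^{-1}$ gains one derivative give $\|\Lambda^{-1}(f)\|_{L^\infty}\lesssim \|f\|_{H^1}$, so $\Lambda^{-1}(f)\cdot g'\in H^1(\mathbb{R})$ with norm $\lesssim \|f\|_{H^1}\|g\|_{H^2}$, and (I) applies to this product. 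The main obstacle I expect is the $\sigma=1$ case for the $S$-operators, where the integrand $w\hat h(w)$ does not vanish at infinity (it tends to $Hg(0)$), so the Minkowski/Hardy bound alone is insufficient and one must combine the decomposition above with the algebraic structure of $P_l$ to absorb the non-decaying contribution cleanly.
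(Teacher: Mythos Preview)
There are two genuine gaps.

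\textbf{First}, your reduction of (III) to (I) rests on the claim $\|\Lambda^{-1}(f)\|_{L^\infty}\lesssim\|f\|_{H^1}$, which is false. Since $\Lambda^{-1}f(z)=\int_0^z Hf(s)\,ds$ and $Hf\in L^2$, one only has $|\Lambda^{-1}f(z)|\lesssim |z|^{1/2}\|f\|_{L^2}$; in general $\Lambda^{-1}f$ grows without bound, so $\Lambda^{-1}(f)\cdot g'$ need not lie in any Sobolev space and you cannot simply invoke (I). The paper handles this by always pairing $\Lambda^{-1}f$ with a factor of $1/z$: it is $\Lambda^{-1}f(z)/z=\frac{1}{z}\int_0^z Hf$ that is controlled (in $L^2$, and in $H^2$) by Hardy's inequality, and every estimate for (III) is organized so that this quotient, not $\Lambda^{-1}f$ itself, appears.

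\textbf{Second}, your treatment of $\sigma=-1$ appeals to the pointwise vanishing orders $\hat g(w)=O(w^2)$ and $\hat h(w)=O(w)$. These would require $g\in C^3$ and $Hg\in C^2$ respectively, neither of which follows from $g\in H^2$ (Sobolev gives only $C^1$). Consequently your Cauchy--Schwarz bound $|\int_0^z w^{-1}\phi\,dw|\lesssim |z|^{1/2}\|\phi\|_{L^2}$ fails: the weight $w^{-1}$ is not in $L^2$ near the origin, and you have not established $w^{-1}\phi\in L^2$. The paper instead proves directly that $(g(z)-zg'(0))/z^2\in L^2$ with norm $\lesssim\|g\|_{H^2}$, via the identity
\[
\frac{g(z)-zg'(0)}{z^2}=-\partial_z\Bigl(\frac{1}{z}\int_0^z g'(w)\,dw\Bigr)+\frac{1}{z}\int_0^z g''(w)\,dw
\]
and two applications of Hardy; this is an $L^2$ statement requiring no pointwise Taylor expansion beyond first order. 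A similar device, together with the identity $H\bigl((f(w)-f(0))/w\bigr)(z)=(Hf(z)-Hf(0))/z$, handles the $S$-operators. Your outline would need these ingredients to close the $\sigma=-1$ case.
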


	\begin{proof}
		We shall first establish all the inequalities in the case $l = 1$. During the course of the argument, it will become clear that the case $l = 2$ can be treated in the same way and is only simpler. 
		
		Let us first show \eqref{eq:L^2_I}. We consider the most difficult case of $\sigma = -1$ first. We have \begin{equation*}
		\begin{split}
		T^{1,-1}(g) = \frac{z(1-z^2)}{(1+z^2)^2} \int_0^z \frac{g(w) - wg'(0)}{w^2} + \frac{w}{1+w^2} g'(0) dw 
		\end{split}
		\end{equation*} and then the second term is simply estimated by \begin{equation*}
		\begin{split}
		\V \frac{z(1-z^2)}{(1+z^2)^2} \cdot g'(0) \cdot \int_0^z \frac{w}{1+w^2} dw \V_{L^2} \le C\V g\V_{H^2} \cdot \V \frac{z(1-z^2) \ln(1+z^2) }{(1+z^2)^2} \V_{L^2} \le C\V g\V_{H^2}.
		\end{split}
		\end{equation*} For the other term, we obtain  \begin{equation*}
		\begin{split}
		&\V \frac{z(1-z^2)}{(1+z^2)^2} \int_0^z \frac{g(w)- wg'(0)}{w^2}dw \V_{L^2}   \le
		\V \frac{z^2(1-z^2)}{(1+z^2)^2} \V_{L^\infty}\cdot \V \frac{1}{z}\int_0^z \frac{g(w)- wg'(0)}{w^2}dw \V_{L^2} \\
		&\qquad\qquad\le C \V \frac{g(z)- zg'(0)}{z^2} \V_{L^2} = C \V -\partial_z \left( \frac{\int_0^z g'(w)dw}{z} \right) + \frac{1}{z} \int_0^z g''(w)dw \V_{L^2}  \le C\V g \V_{H^2},
		\end{split}
		\end{equation*} via applications of the Hardy inequality \eqref{eq:Hardy}. The argument for the case of $S^{1,-1}(g)$ is strictly analogous; we have \begin{equation*}
		\begin{split}
		S^{1,-1}(g) = \frac{z(1-z^2)}{(1+z^2)^2} \int_0^z \frac{Hg(w) - Hg(0)}{w^2} + \frac{2}{1+w^2}Hg(0) dw,
		\end{split}
		\end{equation*} and as before, the latter term can be estimated directly in $L^2$. For the first term, we may rewrite the integral as \begin{equation*}
		\begin{split}
		\int_0^z \frac{Hg(w) - Hg(0)}{w^2} dw = \int_0^z \left( -\partial_w \left(  \frac{1}{w}\int_0^w Hg'(x)dx  \right) + \frac{1}{w} \int_0^w Hg''(x) dx \right) dw 
		\end{split}
		\end{equation*} and then proceed exactly as before. 
		
		We consider the cases $\sigma = 0, 1$. When $\sigma = 0$, \begin{equation*}
		\begin{split}
		T^{1,0}(g) = \frac{z(1-z^2)}{(1+z^2)^2} \int_0^z \frac{g(w)}{w} - \frac{1}{1+w^2} g'(0) dw
		\end{split}
		\end{equation*} and note that the term involving $g'(0)$ is certainly bounded in $L^2$. For the other term, we simply use Hardy inequalities to obtain \begin{equation*}
		\begin{split}
		\V \frac{z(1-z^2)}{(1+z^2)^2} \int_0^z \frac{g(w)}{w}dw\V_{L^2} &\le
		\V \frac{z^2(1-z^2)}{(1+z^2)^2}\V_{L^\infty} \cdot \V\frac{1}{z} \int_0^z \left( \frac{1}{w}\int_0^w g'(x)dx \right) dw \V_{L^2} \\
		&\le C \V \frac{1}{z} \int_0^z g'(w) dw \V_{L^2} \le C \V g'\V_{L^2}. 
		\end{split}
		\end{equation*} In the case $\sigma = 1$, \begin{equation*}
		\begin{split}
		T^{1,1} (g) = \frac{z(1-z^2)}{(1+z^2)^2} \int_0^z g(w) - \frac{w}{1+w^2} g'(0) dw,
		\end{split}
		\end{equation*} and again, the term involving $g'(0)$ can be separately estimated in $L^2$. For the other term, \begin{equation*}
		\begin{split}
		\V \frac{z(1-z^2)}{(1+z^2)^2} \int_0^z g(w) dw\V_{L^2}  &\le
		\V \frac{z^2(1-z^2)}{(1+z^2)^2}\V_{L^\infty} \cdot \V \frac{1}{z} \int_0^z g(w)dw\V_{L^2}  \le C \V g\V_{L^2}.
		\end{split}
		\end{equation*} For each case of $\sigma = 0, 1$, the arguments for the $S^{1,\sigma}$ is again analogous and result in the same estimates. 
	
		\medskip
		
		Next, we deal with \eqref{eq:L^2_II}. First, for $T^{1,\sigma}$, we have \begin{equation*}
		\begin{split}
		T^{1,\sigma}(zg') = \frac{z(1-z^2)}{(1+z^2)^2} \int_0^z w^\sigma \left( g'(w) - g'(0) + \frac{w^2}{1+w^2} g'(0) \right) dw .
		\end{split}
		\end{equation*} 
		
		In the case $\sigma = - 1$, one can directly evaluate the integral in the second term, which results in the bound \begin{equation*}
		\begin{split}
		\V \frac{z(1-z^2)}{2(1+z^2)^2} \ln(1+z^2) g'(0) \V_{L^2} \le C \V g\V_{H^2}. 
		\end{split}
		\end{equation*} Regarding the first term, we rewrite it to bound  \begin{equation*}
		\begin{split}
		&\V \frac{z^2(1-z^2)}{(1+z^2)^2} \cdot \frac{1}{z} \int_0^z  \left( \frac{1}{w} \int_0^w g''(x) dx \right) dw \V_{L^2}\\ &\qquad\le \V \frac{z^2(1-z^2)}{(1+z^2)^2} \V_{L^\infty} \V \frac{1}{z} \int_0^z  \left( \frac{1}{w} \int_0^w g''(x) dx \right) dw \V_{L^2}\le C\V g\V_{H^2}
		\end{split}
		\end{equation*} via a repeated application of the Hardy inequality. 
		
		We treat the cases $\sigma = 0 , 1$ together; rewrite $T^{1,\sigma}(zg')$ as \begin{equation*}
		\begin{split}
		\frac{z(1-z^2)}{(1+z^2)^2} \int_0^z w^\sigma \left( g'(w) - \frac{1}{1+w^2} g'(0) \right) dw
		\end{split}
		\end{equation*} and note that the latter term can be directly estimated in $L^2$ by $\V g \V_{H^2}$ in both cases. When $\sigma = 0$, after integrating by parts we are left with simply \begin{equation*}
		\begin{split}
		\V \frac{z(1-z^2)}{(1+z^2)^2} g(z) \V_{L^2} \le C \V g\V_{L^2},
		\end{split}
		\end{equation*} and when $\sigma = 1$, we have \begin{equation*}
		\begin{split}
		&\V \frac{z(1-z^2)}{(1+z^2)^2} \left( zg(z) - \int_0^z g(w) dw \right) \V_{L^2} \\
		&\qquad \le \V \frac{z^2(1-z^2)}{(1+z^2)^2} \cdot g(z)\V_{L^2} + \V \frac{z^2(1-z^2)}{(1+z^2)^2} \cdot \frac{1}{z} \int_0^z g(w) dw \V_{L^2} \le C \V g\V_{L^2}. 
		\end{split}
		\end{equation*}
		
		We now turn to $S^{1,\sigma}(zg')$. We need to estimate in $L^2$ the following: \begin{equation*}
		\begin{split}
		S^{1,\sigma}(zg') = \frac{z(1-z^2)}{(1+z^2)^2} \int_0^z w^\sigma Hg'(w) dw
		\end{split}
		\end{equation*} since the Hilbert transform of $zg'$ is $ z H(g)'$, which vanishes at the origin. In the case $\sigma = -1$, we have (using the fact that $Hg'(0) = 0$) \begin{equation*}
		\begin{split}
			S^{1,-1}(zg') = \frac{z^2(1-z^2)}{(1+z^2)^2} \cdot \frac{1}{z}\int_0^z \left(
			\frac{1}{w}\int_0^w Hg''(x)dx  \right) dw
		\end{split}
		\end{equation*} which is bounded in $L^2$ by $C\V g \V_{H^2}$ by the Hardy inequality \eqref{eq:Hardy}. When $\sigma = 0$, \begin{equation*}
		\begin{split}
		S^{1,0}(zg') = \frac{z^2(1-z^2)}{(1+z^2)^2} \cdot \frac{1}{z}\int_0^z (Hg)'(w) dw,
		\end{split}
		\end{equation*} and this time, $\V S^{1,0}(zg')\V_{L^2}$ is bounded by $\V g\V_{H^1}$. Finally, in the remaining case of $\sigma = 1$, one may integrate by parts to obtain \begin{equation*}
		\begin{split}
		S^{1,1}(zg') = \frac{z(1-z^2)}{(1+z^2)^2} \left( -\int_0^z Hg(w) dw + z Hg(z) \right),
		\end{split}
		\end{equation*} and note that each term is bounded by a constant multiple of $\V g \V_{L^2}$ in $L^2$. 
		
		\medskip
		
		Turning to \eqref{eq:L^2_III}, we first consider the case of $T^{1,-1}(\Lambda^{-1}f \cdot g')$. The derivative of  $\Lambda^{-1}f \cdot g'$ at the origin is $Hf(0) \cdot g'(0)$. We need to bound \begin{equation*}
		\begin{split}
		\frac{z(1-z^2)}{(1+z^2)^2} \left( \int_0^z \frac{\int_0^w Hf(x)dx \cdot g'(w) - w Hf(0)g'(0)}{w^2} dw + Hf(0)g'(0) \cdot \int_0^z \frac{w}{1+w^2} dw \right)
		\end{split}
		\end{equation*} in $L^2$, and it is straightforward to estimate the second term. Regarding the first term, it suffices by an application of the Hardy inequality to estimate \begin{equation*}
		\begin{split}
		\frac{1}{w^2} \left( \int_0^w Hf(x)dx \cdot g'(w) - w Hf(0)g'(0) \right) 
		\end{split}
		\end{equation*} in $L^2$, and we first rewrite it as \begin{equation*}
		\begin{split}
		\frac{1}{w^2} \left( \int_0^w Hf(x)dx - wHf(0) \right) \cdot g'(w)  + \frac{1}{w} Hf(0) \cdot \left(  g'(w) - g'(0)  \right).
		\end{split}
		\end{equation*} Then, it is clear that the latter is bounded in $L^2$ by $C \V f\V_{H^1} \cdot \V g\V_{H^2}$. Next, the former can be further re-written as $g'(w)$ multiplied with \begin{equation*}
		\begin{split}
		-\partial_w \left( \frac{1}{w} \int_0^w Hf(x) dx \right) + \frac{1}{w}\left( Hf(w) - Hf(0) \right) = -\partial_w \left( \frac{1}{w} \int_0^w Hf(x) dx \right) + \frac{1}{w}\int_0^w Hf'(x)dx,
		\end{split}
		\end{equation*} each of which is bounded by $C \V f\V_{H^1}$. We omit the argument for the (simpler) cases of $T^{1,\sigma}(\Lambda^{-1}f \cdot g')$ with $\sigma = 0, 1$. 
		
		Next, regarding $S^{1,-1}(\Lambda^{-1}f\cdot g')$, we need an $L^2$ bound on \begin{equation*}
		\begin{split}
		& \frac{z(1-z^2)}{(1+z^2)^2} \int_0^z  \frac{H(\Lambda^{-1}f\cdot g')(w) - H(\Lambda^{-1}f \cdot g')(0)}{w^2} + \frac{2w}{1+w^2} H(\Lambda^{-1}f \cdot g')(0) dw.
		\end{split}
		\end{equation*} As before, the latter term can be estimated directly by evaluating the integral; \begin{equation*}
		\begin{split}
		\V \frac{z(1-z^2)}{(1+z^2)^2} \ln(1 + z^2)  H(\Lambda^{-1}f \cdot g')(0) \V_{L^2} \le C | H(\Lambda^{-1}f \cdot g')(0) | \le C \V f\V_{L^2} \V g\V_{H^1}, 
		\end{split}
		\end{equation*} where we have used \begin{equation}\label{eq:Hilbert_L^infty}
		\begin{split}
		\left| H(\Lambda^{-1}(f)\cdot g') (0) \right| &=\frac{2}{\pi} \left| \int_0^\infty \frac{1}{z}\Lambda^{-1}(f)(z) \cdot g'(z) dz \right|  \le C \V f \V_{L^2} \cdot \V g \V_{H^1}.
		\end{split}
		\end{equation} For the former, we first use the identity \eqref{eq:Hilbert_divid_z} to rewrite it as \begin{equation*}
		\begin{split}
		&\frac{z(1-z^2)}{(1+z^2)^2} \int_0^z  \frac{H(\Lambda^{-1}f\cdot g')(w) - H(\Lambda^{-1}f \cdot g')(0)}{w^2} dw \\ &\qquad = \frac{z^2(1-z^2)}{(1+z^2)^2} \cdot \frac{1}{z} \int_0^z \frac{1}{w} H\left( \frac{\Lambda^{-1}f(x)}{x} \cdot g'(x) \right)(w) dw.
		\end{split}
		\end{equation*} Then it suffices to estimate in $L^2$ the following function: \begin{equation*}
		\begin{split}
		\frac{1}{w} H\left( \frac{\Lambda^{-1}f(x)}{x} \cdot g'(x) \right)(w) = \frac{1}{w} \int_0^w H\left(\frac{\Lambda^{-1}f(x)}{x} \cdot g'(x)\right)'(s)ds,
		\end{split}
		\end{equation*} where we have used that the function \begin{equation*}
		\begin{split}
		\frac{\Lambda^{-1}f(x)}{x} \cdot g'(x)
		\end{split}
		\end{equation*} is even, so that its Hilbert transform is odd. At this point, we note that \begin{equation*}
		\begin{split}
		\V \frac{1}{w} \int_0^w H\left(\frac{\Lambda^{-1}f(x)}{x} \cdot g'(x)\right)'(s)ds \V_{L^2} \le C \V \frac{\Lambda^{-1}f(x)}{x} \cdot g'(x) \V_{H^1} \le C \V f\V_{H^1} \cdot \V g\V_{H^2}. 
		\end{split}
		\end{equation*} As in the previous cases, the arguments for $S^{1,\sigma}$ with $\sigma = 0, 1$ are simpler and we omit the proof. 
		
		\medskip
		
		Lastly, it only remains to treat the case when $l = 2$. However, note that the prefactor in this case equals \begin{equation*}
		\begin{split}
		\frac{2z^2}{(1+z^2)^2},
		\end{split}
		\end{equation*} which decays faster as $|z| \rightarrow \infty$ and also has one more factor of $z$ when $|z| \rightarrow 0$. Therefore, the arguments from the previous case simply carries over. 		
	\end{proof}

	We now state and prove the necessary $H^3$-bounds.

	\begin{lemma}[$H^3$-bounds]\label{lem:H3}
		Let $E$ and $F$ be odd functions in $H^3(\mathbb{R})$. For each $l \in \{1,2\}$ and $\sigma \in \{-1,0,1\}$, we have the estimates 
		\begin{equation}\label{eq:key_est1}
		\begin{split}
		\V S^{l,\sigma}((\Lambda^{-1}E) \cdot F') \V_{H^3(\mathbb{R})} + \V T^{l,\sigma}((\Lambda^{-1}E) \cdot F') \V_{H^3(\mathbb{R})} &\le C \V E \V_{H^3(\mathbb{R})} \V F\V_{H^3(\mathbb{R})},
		\end{split}
		\end{equation}\begin{equation}\label{eq:key_est2}
		\begin{split}
		\V S^{l,\sigma}(z F') \V_{H^3(\mathbb{R})} + \V T^{l,\sigma}(z F') \V_{H^3(\mathbb{R})} &\le C \V F\V_{H^3(\mathbb{R})},
		\end{split}
		\end{equation} and \begin{equation}\label{eq:key_est3}
		\begin{split}
		\V S^{l,\sigma}(E\cdot H(F)) \V_{H^3(\mathbb{R})} + \V T^{l,\sigma}( {E} \cdot H(F)) \V_{H^3(\mathbb{R})} &\le C \V E \V_{H^3(\mathbb{R})} \V F\V_{H^3(\mathbb{R})}. 
		\end{split}
		\end{equation}
	\end{lemma}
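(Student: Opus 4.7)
The plan is to combine the $L^2$-bounds of Lemma \ref{lem:L2} with a Leibniz decomposition. For each operator, write $T^{l,\sigma}(g) = P_l(z)\,J_\sigma(z)$, where the rational prefactor $P_l(z)\in\{z(1-z^2)/(1+z^2)^2,\,2z^2/(1+z^2)^2\}$ and its first three derivatives are bounded with decay $|z|^{-1}$ at infinity and a zero at the origin, and $J_\sigma(z)=\int_0^z w^\sigma I(w)\,dw$ with $I(w)=g(w)/w - g'(0)/(1+w^2)$. By Leibniz,
$$\V P_l J_\sigma\V_{H^3(\mathbb{R})}\lesssim \sum_{k=0}^3 \V P_l^{(k)} J_\sigma^{(3-k)}\V_{L^2(\mathbb{R})},$$
and for $k\geq 1$ the low-order derivatives of $J_\sigma$ can be controlled exactly as in Lemma \ref{lem:L2} applied, or slightly modified, to the argument $g$. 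The genuinely new term is $P_l J_\sigma'''$, which using $J_\sigma'(z)=z^\sigma I(z)$ reduces to an $L^2$-bound on $P_l(z)\cdot\partial_z^2(z^\sigma I(z))$. The corresponding $S^{l,\sigma}$ estimates are handled identically after replacing $g(w)/w-g'(0)/(1+w^2)$ by $(Hg(w)-Hg(0))/w+2w\,Hg(0)/(1+w^2)$ in $I$, with the same structural analysis below.

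For $\sigma\in\{0,1\}$ the integrand $z^\sigma I(z)$ contains no inverse power of $z$, so $\partial_z^2$ falls on $I$ directly. For each of the three inputs $g\in\{\Lambda^{-1}(E)F',\,zF',\,E\,H(F)\}$ with $E,F\in H^3$ odd, the quantity $I$ lies in $H^2$ by standard product and commutator estimates in Sobolev spaces together with boundedness of $H$ on $H^s$ and the one-derivative gain of $\Lambda^{-1}$; this yields the desired bounds. The case $l=2$ is always easier than $l=1$, because $P_2$ has an extra zero at the origin and faster decay at infinity, so one reduces everything to $l=1$.

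The case $\sigma=-1$ is the main obstacle: as the paragraph preceding the statement warns, $T^{1,-1}$ is \emph{not} bounded from $H^2$ to $H^3$ on generic odd $g\in H^2$, so the extra derivative must be extracted from the product structure of $g$. The key observation is that for each admissible input $g(0)=0$, which combined with the cancellation $g'(0)/(1+w^2)$ built into $I$ yields
$$\frac{I(w)}{w}=\frac{g(w)-wg'(0)}{w^2}+g'(0)\cdot\frac{w}{1+w^2},$$
realizing $w^{-1}I(w)$ as an averaging/Hardy-type expression in ingredients that already have three Sobolev derivatives. Concretely, for $g=zF'$ the first summand equals $w^{-1}\int_0^w F''(x)\,dx$, handled by iterated Hardy \eqref{eq:Hardy}; for $g=\Lambda^{-1}(E)F'$, the vanishing $\Lambda^{-1}(E)(0)=0$ lets us write $g(w)/w=(\Lambda^{-1}(E)(w)/w)F'(w)$ with $\Lambda^{-1}(E)(w)/w$ an even $H^3$-function by Hardy and boundedness of $H$, yielding the bilinear bound in \eqref{eq:key_est1}; $g=E\cdot H(F)$ is analogous, factoring out $E(w)/w$. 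For $S^{l,-1}$ one first commutes the Hilbert transform past $w^{-1}$ via the appendix identity \eqref{eq:Hilbert_divid_z} and then repeats. The principal remaining difficulty is the bookkeeping needed to verify that after these cancellations the weight $P_1(z)\sim z$ at the origin absorbs the mild singularities of $\partial_z^2(w^{-1}I)$ there, leaving an $L^2$ function with precisely the bilinear/linear norm bound claimed in \eqref{eq:key_est1}--\eqref{eq:key_est3}.
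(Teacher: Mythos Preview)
Your plan is correct and follows essentially the same route as the paper: Leibniz decomposition of $P_l(z)J_\sigma(z)$, reduction to $l=1$, isolation of $\sigma=-1$ as the delicate case, the splitting $\frac{g(w)-wg'(0)}{w^2}+g'(0)\frac{w}{1+w^2}$, the factorisation $\Lambda^{-1}E(w)/w$ (resp.\ $E(w)/w$) via Hardy, and the use of \eqref{eq:Hilbert_divid_z} for the $S$-operators. The one organisational difference is that the paper, after invoking the $L^2$-bounds of Lemma~\ref{lem:L2} and interpolation, takes a \emph{single} derivative of the integral via the fundamental theorem of calculus and then bounds the resulting product $\frac{P_l(z)}{z}\cdot I(z)$ (for $\sigma=-1$) directly in $H^2$ using the algebra property $\|fg\|_{H^2}\le C\|f\|_{H^2}\|g\|_{H^2}$; this packages the absorption of the $z^{-1}$ singularity by $P_l(z)\sim z$ and the two remaining derivatives into a single clean step, avoiding the ``bookkeeping'' you flag in your last sentence, where you would instead compute $P_l(z)\,\partial_z^2(z^{-1}I(z))$ and track the cancellation of each singular term individually.
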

	
	\begin{proof}
		Let us give a brief outline of the proof. We first establish the inequality \eqref{eq:key_est1}, and then we proceed to the proof of \eqref{eq:key_est2}, which can be done in a similar way. After that we prove \eqref{eq:key_est3}. We fix $l = 1$, since as in the previous Lemma, the $l = 2$ case is only simpler. 
		 % Moreover, we note in advance that for each of the cases \eqref{eq:key_est1}, \eqref{eq:key_est2}, and \eqref{eq:key_est3}, the case of $\sigma = -1$ is the most difficult one (for $\sigma = 0, 1$, we can actually put $H^2$ of $E$ and $F$ in the right hand side of the estimates).
		
		\medskip
		
		\textbf{(i) proof of \eqref{eq:key_est1}}
		
		\smallskip
		
		Take $\sigma = -1$, and let us establish \eqref{eq:key_est1} starting with the term involving $T^{1,-1}$. Noting that $\partial_z(\Lambda^{-1}E \cdot F')(0) = (\Lambda^{-1}E)'(0) F'(0) = H(E)(0) F(0)$, we have to bound \begin{equation*}
		\begin{split}
		&\V \partial_z^3 \left(  \frac{z(1-z^2)}{(1+z^2)^2} \int_0^z \frac{\Lambda^{-1}E(w)\cdot F'(w) - w H(E)(0) F'(0) }{w^2} + \frac{w}{1+w^2} H(E)(0) F'(0) dw    \right) \V_{L^2}. 
		\end{split}
		\end{equation*}
		
		To begin with, the second term is clearly bounded in $H^3$ by $C\V E\V_{H^3} \cdot \V F\V_{H^3}$. Regarding the first term, from the previous $L^2$-bound and an interpolation, it suffices to treat the case where all three derivatives fall on the integral term. Taking one derivative, we get \begin{equation*}
		\begin{split}
		&\frac{1-z^2}{(1+z^2)^2} \cdot \frac{\Lambda^{-1}E(z)\cdot F'(z) - z H(E)(0) F'(0) }{z} \\
		&\qquad\qquad= \frac{1-z^2}{(1+z^2)^2} \left(  \frac{\Lambda^{-1}E(z) }{z} F'(z) - H(E)(0)F'(0)   \right)
		\end{split}
		\end{equation*}  and then using the algebra property $\V fg\V_{H^2} \le C\V f\V_{H^2} \V g\V_{H^2}$ gives \begin{equation*}
		\begin{split}
		\V \frac{1-z^2}{(1+z^2)^2} \cdot \frac{1}{z} \int_0^z HE(w) dw  \cdot F'(z)\V_{H^2} \le C \V \frac{1}{z} \int_0^z HE(w) dw \V_{H^2} \cdot \V F'\V_{H^2} \le C\V E\V_{H^2}\V F\V_{H^3}
		\end{split}
		\end{equation*} with the Hardy inequality \eqref{eq:Hardy}. 
		
		Next, for $S^{1,-1}$, we need a bound on \begin{equation*}
		\begin{split}
		&\V \frac{z(1-z^2)}{(1+z^2)^2} \int_0^z  \frac{H(\Lambda^{-1}E\cdot F')(w) - H(\Lambda^{-1}E \cdot F')(0)}{w^2} + \frac{2w}{1+w^2} H(\Lambda^{-1}E \cdot F')(0) dw \V_{H^3}. 
		\end{split}
		\end{equation*} The latter term can be estimated directly; integrating in $z$ gives \begin{equation*}
		\begin{split}
		\V \frac{z(1-z^2)}{(1+z^2)^2} \ln(1 + z^2)  H(\Lambda^{-1}E \cdot F')(0) \V_{H^3} \le C | H(\Lambda^{-1}E \cdot F')(0) | \le C \V E\V_{H^3} \V F\V_{H^3}, 
		\end{split}
		\end{equation*} where we have used \eqref{eq:Hilbert_L^infty}. For the other term, we first use the identity \eqref{eq:Hilbert_divid_z} to first rewrite it as \begin{equation*}
		\begin{split}
		&\frac{z(1-z^2)}{(1+z^2)^2} \int_0^z  \frac{H(\Lambda^{-1}E\cdot F')(w) - H(\Lambda^{-1}E \cdot F')(0)}{w^2} dw \\ &\qquad = \frac{z(1-z^2)}{(1+z^2)^2} \int_0^z \frac{1}{w} H\left( \frac{\Lambda^{-1}E(x)}{x} \cdot F'(x) \right)(w) dw.
		\end{split}
		\end{equation*} As before, we only need to consider the case when the derivatives fall on the integral term, and taking one derivative, we obtain \begin{equation*}
		\begin{split}
		\frac{1-z^2}{(1+z^2)^2} H\left( \frac{\Lambda^{-1}E(w)}{w}\cdot F'(w) \right) (z) ,
		\end{split}
		\end{equation*} and since \begin{equation*}
		\begin{split}
		\V\frac{1-z^2}{(1+z^2)^2} H\left( \frac{\Lambda^{-1}E(w)}{w}\cdot F'(w) \right) (z)\V_{H^2} &\le C\V \frac{\Lambda^{-1}E(z)}{z}\cdot F'(z)\V_{H^2} ,
		\end{split}
		\end{equation*} we conclude the desired bound as in the above case of $T^{1,-1}$. This concludes the estimate \eqref{eq:key_est1} in the case $\sigma = -1$. 
		
		We now deal with the case $\sigma = 0$. We need to bound \begin{equation*}
		\begin{split}
		&\frac{z(1-z^2)}{(1+z^2)^2} \int_0^z \frac{\Lambda^{-1}E(w)}{w} \cdot F'(w) - \frac{1}{1+w^2} H(E)(0) F'(0) dw
		\end{split}
		\end{equation*} in $H^3$, and again we only worry about the case when derivatives fall on the integral. Taking one derivative, \begin{equation*}
		\begin{split}
		\frac{(1-z^2)}{(1+z^2)^2}  \left( \Lambda^{-1}E(z) \cdot F'(z)   - \frac{z}{1+z^2} H(E)(0)F'(0) \right)
		\end{split}
		\end{equation*} and it is direct to see that \begin{equation*}
		\begin{split}
		\V \frac{(1-z^2)}{(1+z^2)^2} \cdot \frac{z}{1+z^2} H(E)(0)F'(0)\V_{H^2}  \le C \V E\V_{H^3} \cdot \V F\V_{H^3}.
		\end{split}
		\end{equation*} Regarding the other term, we rewrite it as \begin{equation*}
		\begin{split}
			\frac{z(1-z^2)}{(1+z^2)^2} \cdot F'(z) \cdot \frac{1}{z} \int_0^z H(E)(w) dw ,
		\end{split}
		\end{equation*} and then we have \begin{equation*}
		\begin{split}
		\Vert 	\frac{z(1-z^2)}{(1+z^2)^2} \cdot F'(z) \cdot \frac{1}{z} \int_0^z H(E)(w) dw \V_{H^2} \le C\V F\V_{H^3} \cdot \V E\V_{H^2},
		\end{split}
		\end{equation*} using the Hardy inequality. Now the remaining case $\sigma = 1$ can be handled similarly; this time we have \begin{equation*}
		\begin{split}
		\frac{z(1-z^2)}{(1+z^2)^2} \int_0^z \Lambda^{-1}E(w) \cdot F'(w) - \frac{w}{1+w^2} H(E)(0)F'(0) dw,
		\end{split}
		\end{equation*} and note that the latter term is bounded in $H^3$. The other term, when a derivative falls on the integral, becomes \begin{equation*}
		\begin{split}
		\frac{z^2(1-z^2)}{(1+z^2)^2} \frac{\Lambda^{-1}E(z)}{z} \cdot F'(z)  = \frac{z^2(1-z^2)}{(1+z^2)^2} \frac{1}{z} \int_0^z H(E)(w)dw  \cdot F'(z) 
		\end{split}
		\end{equation*} and again using the Hardy inequality, \begin{equation*}
		\begin{split}
		\V \frac{z^2(1-z^2)}{(1+z^2)^2} \frac{1}{z} \int_0^z H(E)(w)dw  \cdot F'(z) \V_{H^2} \le C \V E\V_{H^2} \cdot \V F\V_{H^3}.
		\end{split}
		\end{equation*}
		
		Lastly we deal with $S^{1,0}$ and $S^{1,1}$. In these cases, for simplicity set $g= \Lambda^{-1}E \cdot F' $ and we need to bound \begin{equation*}
		\begin{split}
		\frac{z(1-z^2)}{(1+z^2)^2} \int_0^z w^\sigma \left( \frac{Hg(w)}{w} + \left( \frac{2w}{1+w^2} - \frac{1}{w} \right) Hg(0)  \right)dw 
		\end{split}
		\end{equation*} for $\sigma = 0,1$ and differentiating the integral, \begin{equation*}
		\begin{split}
		\frac{1-z^2}{(1+z^2)^2} \left( z^\sigma  Hg(z) + z^\sigma \left( \frac{2z^2}{1+z^2} -1 \right)Hg(0) \right),
		\end{split}
		\end{equation*} and it is straightforward to see that when $\sigma = 0, 1$, both terms are bounded in $H^2$ by $C \V F\V_{H^3} \cdot \V E \V_{H^3}$. This establishes \eqref{eq:key_est1}.

		\medskip
		
		\textbf{(ii) proof of \eqref{eq:key_est2}}
		
		\smallskip

		In this case, $H(zF') = zH(F)'$ and hence the corresponding bound for $S^{l,\sigma}$ follows similarly from the bound for $T^{l,\sigma}$. We first take $\sigma = -1$, and proceed to show that \begin{equation*}
		\begin{split}
		\V \partial_z^3 \left( \frac{z(1-z^2)}{(1+z^2)^2} \int_0^z \frac{wF'(w) - wF'(0)}{w^2} + \frac{w}{1+w^2} F'(0) \right) \V_{L^2} \le C\V F\V_{H^3}
		\end{split}
		\end{equation*} holds. Taking one derivative on the integral term gives \begin{equation*}
		\begin{split}
		\frac{1-z^2}{(1+z^2)^2} \cdot \left( F'(z)  + \left( \frac{z^2}{1+z^2}-1 \right) F'(0) \right),
		\end{split}
		\end{equation*} and it is straightforward to see that both terms are bounded by $C\V F\V_{H^3}$. 
		
		On the other hand, when $\sigma = 1$, we need a bound on \begin{equation*}
		\begin{split}
		\V \partial_z^3 \left(  \frac{z(1-z^2)}{(1+z^2)^2} \left( \int_0^z wF'(w) - wF'(0) + \frac{w^3}{1+w^2} F'(0) dw  \right)  \right)\V_{H^2},
		\end{split}
		\end{equation*} and taking one derivative on the integral term gives \begin{equation*}
		\begin{split}
		\frac{z(1-z^2)}{(1+z^2)^2} \left( zF'(z) + \left(\frac{z^3}{1+z^2} - z \right) F'(0) \right) = \frac{z^2(1-z^2)}{(1+z^2)^2} \left( F'(z) - \frac{1}{1+z^2} F'(0) \right),
		\end{split}
		\end{equation*} so that \begin{equation*}
		\begin{split}
		\V \frac{z^2(1-z^2)}{(1+z^2)^2} \left( F'(z) - \frac{1}{1+z^2} F'(0) \right)\V_{H^2} \le C \V F'(z) - \frac{1}{1+z^2} F'(0) \V_{H^2} \le C \V F\V_{H^3}. 
		\end{split}
		\end{equation*}
		
		We omit the proof for the intermediate case of $\sigma = 0$, which is simpler. 
			
		\medskip
		
		\textbf{(iii) proof of \eqref{eq:key_est3}}
		
		\smallskip
		
		Noting that the function $G := H(E)\cdot F$ is again odd, it suffices to show that $\V T^{l,\sigma}(G) \V_{H^3} + \V S^{l,\sigma}(G) \V_{H^3} \le C\V G\V_{H^3}$. 
		
		Taking the derivative of the integral in \begin{equation*}
		\begin{split}
		T^{1,-1}(G) = \frac{z(1-z^2)}{(1+z^2)^2} \int_0^z \frac{1}{w} \left( \frac{G(w)}{w} - \frac{1}{1+w^2}G'(0) \right) dw,
		\end{split}
		\end{equation*} we obtain \begin{equation*}
		\begin{split}
		\frac{1-z^2}{(1+z^2)^2} \left( \frac{1}{z}\int_0^z G'(w)dw - \frac{1}{1+z^2} G'(0) \right),
		\end{split}
		\end{equation*} which is clearly bounded in $H^2$ by $C\V G\V_{H^3}$.
		
		Similarly, differentiating the integral term in \begin{equation*}
		\begin{split}
		S^{1,-1}(G) = \frac{z(1-z^2)}{(1+z^2)^2} \int_0^z \frac{1}{w} \left( \frac{H(G)(w) - H(G)(0)}{w} + \frac{2w}{1+w^2} H(G)(0) \right) dw,
		\end{split}
		\end{equation*} we get \begin{equation*}
		\begin{split}
		\frac{1-z^2}{(1+z^2)^2} \left( \frac{1}{z}\int_0^z H(G)'(w)dw + \frac{2z}{1+z^2} H(G)(0) \right),
		\end{split}
		\end{equation*} which is again bounded in $H^2$ by $C\V G\V_{H^3}$. 
		
		As before, the cases $\sigma = 0, 1$ are only simpler and we omit the proof. This concludes the proof of \eqref{eq:key_est3}. 
	\end{proof}

	Given the above lemmas, we are in a position to establish the convergence of \eqref{eq:expansion} for $|a|$ small, which completes the proof of Theorem \ref{thm:main1}.

	\begin{proof}[Proof of Theorem \ref{thm:main1}]
	It suffices to show that there exists an absolute constant $r>0$, such that for all $n \ge 1$, we have \begin{equation*}
	\begin{split}
	\V F_n \V_{H^3(\mathbb{R})} \le r^n, \qquad |\lambda_n| \le r^n. 
	\end{split}
	\end{equation*} Then we may pick $a_0 = 1/r$, and take $a_0$ smaller if necessary to guarantee that $\lambda(a) > -2$ for $|a| < a_0$. For simplicity we set $\V F_n \V_{H^3} =: \mu_n$ and let us translate the estimates from Lemma \ref{lem:L2} and Lemma \ref{lem:H3} in terms of sequences $\{ \mu_n \}, \{ \lambda_n \}$.

	Let us begin by recalling that \begin{equation*}
	\begin{split}
	L(F_n) = G_n -  \lambda_n \frac{z(1-z^2)}{(1+z^2)^2} ,
	\end{split}
	\end{equation*} with \begin{equation*}
	\begin{split}
	G_n(z) =  \sum_{j=0}^{n-1} \Lambda^{-1}(F_j)(z)  \cdot F'_{n-1-j}(z) - \sum_{j=1}^{n-1} z \lambda_j F'_{n-j}(z) - 2 \sum_{j=1}^{n-1} H(F_{n-j})(z) F_{j}(z),
	\end{split}
	\end{equation*} and \begin{equation*}
	\begin{split}
	\lambda_n = G_n'(0) + 2(HG_n)(0).
	\end{split}
	\end{equation*} Noting that \begin{equation*}
	\begin{split}
	G_n'(0) &= \sum_{j=0}^{n-1} HF_j(0) F'_{n-1-j}(0) - \sum_{j=1}^{n-1} \lambda_j F'_{n-j}(0) - 2\sum_{j=1}^{n-1} F_j'(0) HF_{n-j}(0) \\
	HG_n(0) &= \sum_{j=0}^{n-1} H(\Lambda^{-1}(F_j) F'_{n-1-j})(0) - \sum_{j=1}^{n-1} \left(  -F_j(0) F_{n-j}(0) + HF_{j}(0) HF_{n-j}(0)     \right)
	\end{split}
	\end{equation*} and simply using crude bounds $\V H(F_j)\V_{L^\infty}, \V F_j'\V_{L^\infty} \le C \V F_j\V_{H^3}$ as well as \eqref{eq:Hilbert_L^infty} we deduce that \begin{equation}\label{eq:A}
	\begin{split}
	|\lambda_n| \le C\mu_0 \mu_{n-1} +  C \sum_{j=1}^{n-1} \left( \mu_j \mu_{n-1-j} + |\lambda_j| \mu_{n-j} + \mu_j \mu_{n-j} \right).
	\end{split}
	\end{equation}
	
	Given $\lambda_n$, we first write $L^{-1}( G_n -  \lambda_n \frac{z(1-z^2)}{(1+z^2)^2})$ as a linear combination (with constant coefficients) of twelve operators $T^{l,\sigma}$, $S^{l',\sigma'}$ with $l,l'\in \{1,2\}, \sigma,\sigma' \in \{-1,0,1 \}$, and applying Lemmas \ref{lem:L2}, \ref{lem:H3} to each term of $G_n$, we  deduce that \begin{equation}\label{eq:B}
	\begin{split}
	\mu_n \le C|\lambda_n| +  C\mu_0 \mu_{n-1} +  C \sum_{j=1}^{n-1} \left( \mu_j \mu_{n-1-j} + |\lambda_j| \mu_{n-j} + \mu_j \mu_{n-j} \right).
	\end{split}
	\end{equation}
		
%	We recall that $\hat{g}$ and $\hat{h}$ are explicitly given by \begin{equation}\label{eq:tilde_g}
%	\begin{split}
%	&\hat{g}(z) = -\frac{\tan^{-1}(z)-z}{z} F'_{n-1} - \lambda_n \frac{z(1-z^2)}{(1+z^2)^2} +  \sum_{j=1}^{n-1} \left(   
%	\frac{u_{F_j}}{z} F'_{n-1-j} - \lambda_j F'_{n-j} - 2 F_j HF_{n-j} \right) \\
%	& -\frac{1}{1+z^2} \left( HF_0(0) F'_{n-1}(0) - \lambda_n - \sum_{j=1}^{n-1} \left(  HF_j(0) F'_{n-1-j}(0) - \lambda_j F'_{n-j}(0) - 2 F'_j(0) HF_{n-j}(0)  \right)        \right)
%	\end{split}
%	\end{equation} 

	Therefore, combining \eqref{eq:A} and \eqref{eq:B}, there exists some absolute constant $C_0 > 0$ such that $\zeta_n:= \mu_n + C_0(|\lambda_n| + \mu_{n-1}) $ for $n \ge 1$ satisfies the inequality \begin{equation}\label{eq:quad}
	\begin{split}
	\zeta_n \le C\sum_{j=1}^{n-1} \zeta_j \zeta_{n-j}
	\end{split}
	\end{equation} for all $n \ge 1$. Then, denoting $\bar{\zeta}_n$ to be the solution of the quadratic recursion \begin{equation}\label{eq:quad_recur}
	\begin{split}
	\bar{\zeta}_n :=  \sum_{j=1}^{n-1} \bar\zeta_j \bar\zeta_{n-j}, \qquad n \ge 2
	\end{split}
	\end{equation} with $\bar{\zeta}_1 := C\zeta_1$ (here $C$ is the absolute constant from \eqref{eq:quad}), we deduce the desired statement, simply because the sequence $\bar{\zeta}_n$ is precisely the sequence of coefficients of the Taylor expansion of $f$ satisfying \begin{equation*}
	\begin{split}
	f(z) - f(z)^2 = \bar{\zeta}_1 z
	\end{split}
	\end{equation*} around $z = 0$, so that in particular we have, for some $r > 0$, 
	 \begin{equation*}
	\begin{split}
	|\zeta_n| \le C^n|\bar{\zeta}_n| \le r^n
	\end{split}
	\end{equation*} where $C$ is the same constant from \eqref{eq:quad}. This finishes the proof.
\end{proof}

\begin{remark}
	As we have mentioned in the introduction, a similar argument goes through for $H^s$ with any value of $s \ge 3$, which in particular concludes that the functions $F_1, F_2,\cdots$, as well as the profile $F$ are $C^\infty(\mathbb{R})$-smooth. It is likely that these functions are indeed real analytic, and one way to establish such a result would be to carefully carry out the $H^s$-estimates, keeping track of the dependence of multiplicative constants in terms of $s$.  
\end{remark}

\begin{proof}[Proof of Theorem \ref{thm:main1prime}]
	We simply use the equation \eqref{eq:DeG_resc} to obtain the desired decay statement for the self-similar profile $F$.
	Multiplying both sides of \eqref{eq:DeG_resc} by $z^{\frac{1}{1+\lambda} - 1}$ and then integrating, we get: \begin{equation*}
	\begin{split}
	z^{\frac{1}{1+\lambda}} F(z) = \frac{1}{1+\lambda} \int_0^z s^{\frac{1}{1+\lambda} - 1 } \left( F(s) \cdot H(F)(s) - a \Lambda^{-1}(F)(s) \cdot F'(s) \right) ds.
	\end{split}
	\end{equation*} 	
	Since $a$ is small, let's assume $\lambda > -1$. Then the function $s^{\frac{1}{1+\lambda} - 1} $ is locally integrable.  We consider two cases: $1/(1+\lambda) -1 \ge 0$ and $< 0$. In the latter case, the right hand side is simply bounded in absolute value in the limit $z \rightarrow +\infty$, which gives the desired decay of $z^{-\frac{1}{1+\lambda}}$. To see this, just note that the integral in the region $0 \le s \le 1$ is bounded by a constant multiple of \begin{equation*}
	\begin{split}
	\V F \cdot H(F) - a \Lambda^{-1}(F) \cdot F' \V_{L^\infty},
	\end{split}
	\end{equation*} and for $s > 1$ we just use the $L^2$ bound on the first term\begin{equation*}
	\begin{split}
	\V F H(F)\V_{L^1}\leq \V F \V_{L^2} \cdot \V H(F)\V_{L^2}.
	\end{split}
	\end{equation*} 
	For the transport term we integrate by parts to get  $$az^{\frac{1}{1+\lambda}-1}\Lambda^{-1}(F)(z)\cdot F(z)-a\int_0^z s^{\frac{1}{1+\lambda}-1}H(F)F+(\frac{1}{1+\lambda}-1)s^{\frac{1}{1+\lambda}-1} \cdot \frac{1}{s}\Lambda^{-1}(F) \cdot F\, ds,$$ and since $|\Lambda^{-1}(F)(z)|\leq z^{\frac{1}{2}}$ due to the $L^2$ estimate on $H(F)$, the boundary term can be subsumed into the term on the left hand side for $z$ large, which is $z^{\frac{1}{1+\lambda}}F$. This takes care of the boundary term. Now, the two integral terms are handled just as before noting that $s^{-1}\Lambda^{-1}F\in L^2$ using Hardy's inequality. 
	
	In the other case of $1/(1+\lambda) \ge 1$, running the above argument gives instead the decay $|F(z)| \lesssim z^{-1}$, and in particular $F$ and $H(F)$ belongs to $L^p$ for all $p > 1$. Once one has this, we re-insert this information above to get better decay on $F$ until we see that $F$ actually decays like $z^{-\frac{1}{1+\lambda}}.$
\end{proof}

\section{Self-similar blow up for H\"older continuous data}\label{sec:Hoelder}

In this section, our goal consists of establishing Theorem \ref{thm:main2}, which states the self-similar blow up of merely $C^\alpha$ solutions to \eqref{eq:DeG}, over a range of $a$ scaling as $1/\alpha$ for $\alpha \rightarrow 0^+$. This is done first by constructing a family of self-similar solutions to the CLM equation which are merely $C^\alpha$; in fact, they are smooth functions of $|z|^\alpha$. Then the proof will be similar to the smooth case except that we will need to take great care in that we are dealing with smooth functions of $|z|^\alpha$ rather than simply smooth functions. This leads us to define the operators $\tilde H^{(n)}$ which describe how the Hilbert transform acts on $L^2$ functions of $|z|^\alpha$. We effectively prove that $L^2$ functions of $|z|^\alpha$ are mapped to $L^2$ functions of $|z|^\alpha$ with an $L^2$ operator norm on the order of $\frac{C}{\alpha},$ though we only do this for $\alpha=\frac{1}{n}$ with $n\in\mathbb{N}$, as all we actually need is a sequence of $C^\alpha$ solutions with $\alpha\rightarrow 0$. Other than the extra technical machinery needed to deal with this case as well as the careful checking of the dependence of constants on $\alpha$ as $\alpha\rightarrow 0$, the main idea is similar that in the preceding section. 

\subsection{Self-similar H\"older continuous solutions for CLM}

As in the case of smooth data, the starting point is to find self-similar solutions to the Constantin-Lax-Majda equation which is only H\"older continuous. More precisely, for each $ 0 < \alpha <1$ we seek for a $C^\alpha$-profile $F^{(\alpha)}_0$ such that \begin{equation*}
\begin{split}
\omega(t,x) = \frac{1}{1-t} F^{(\alpha)}_0\left( \frac{x}{(1-t)^{\frac{1}{\alpha}}} \right)
\end{split}
\end{equation*} provides a solution to the Constantin-Lax-Majda equation. This reduces to the following differential equation for $F^{(\alpha)}_0$:\begin{equation}\label{eq:alpha}
\begin{split}
F^{(\alpha)}_0 + \frac{1}{\alpha} zF^{(\alpha)'}_0 + 2F^{(\alpha)}_0 \cdot H(F^{(\alpha)}_0) = 0,
\end{split}
\end{equation} and it can be checked explicitly that the following pair of functions \begin{equation}\label{eq:F_alpha}
\begin{split}
F^{(\alpha)}_0(z) = \frac{\sin\left(\frac{\alpha\pi}{2}\right) \mathrm{sgn}(z) |z|^\alpha }{ 1 + 2\cos\left(\frac{\alpha\pi}{2}\right)|z|^\alpha + |z|^{2\alpha} }
\end{split}
\end{equation} and \begin{equation}\label{eq:HF_alpha}
\begin{split}
H(F^{(\alpha)}_0)(z) = - \frac{1 + \cos\left(\frac{\alpha\pi}{2}\right)|z|^\alpha}{ 1 + 2\cos\left(\frac{\alpha\pi}{2}\right)|z|^\alpha + |z|^{2\alpha} }.
\end{split}
\end{equation} provides a solution to \eqref{eq:alpha} (see Figure \ref{fig:C_alpha}). In the case $\alpha = 1$, we obtain our familiar pair of functions $(F_0, H(F_0))$. To argue that \eqref{eq:HF_alpha} is indeed the Hilbert transform of \eqref{eq:F_alpha}, we use complex notation \begin{equation*}
\begin{split}
V(z) := F^{(\alpha)}_0(z) + i H(F^{(\alpha)}_0)(z)
\end{split}
\end{equation*} and then taking the Hilbert transform of \eqref{eq:alpha}, one obtains the ODE system \begin{equation*}
\begin{split}
V + \frac{1}{\alpha} z V' - iV^2 = 0. 
\end{split}
\end{equation*} Then, integrating the system, one obtains that $V$ is necessarily in the following form with some (complex) constant of integration $C = C^{(\alpha)}$: \begin{equation*}
\begin{split}
V(z) = \frac{1}{i + Cz^\alpha},
\end{split}
\end{equation*} where we define $z^\alpha := r^\alpha e^{i\alpha\theta}$ for $ z= re^{i\theta}$ ($0 \le \theta \le \pi$) as a holomorphic function on the upper half-plane. It is easy to check that \begin{equation*}
\begin{split}
 C^{(\alpha)} = \sin\left(\frac{\alpha\pi}{2}\right) + i \cos\left(\frac{\alpha\pi}{2}\right)
\end{split}
\end{equation*} is the unique number (up to a scaling in $\mathbb{R}^+$) which makes $V(z)$ holomorphic in the upper half-plane with an \textit{odd} real part when restricted to the real axis. With this value of $C^{(\alpha)}$, one has \begin{equation*}
\begin{split}
F^{(\alpha)}_0(z) = \Re\left(  V \right)(z),\qquad H(F^{(\alpha)}_0)(z) = \Im\left( V \right)(z)
\end{split}
\end{equation*} which in particular concludes that \eqref{eq:HF_alpha} is the Hilbert transform of \eqref{eq:F_alpha}. We have proved the following proposition: \begin{proposition}
	For each $0 < \alpha \le 1$, $F^{(\alpha)}$ is the only odd data which is a smooth function of $z^\alpha$ on $\mathbb{R}^+$ and extends as holomorphically to the upper half-plane, which defines a self-similar solution to the Constantin-Lax-Majda equation \eqref{eq:CLM}. 
\end{proposition}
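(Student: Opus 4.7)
The plan is to show both existence and uniqueness by reducing the problem to a scalar complex ODE whose solutions can be written down explicitly, and then to identify the constants of integration using the required symmetry and holomorphicity.

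First I would substitute the self-similar ansatz $\omega(t,x)=\frac{1}{1-t}F(x/(1-t)^{1/\alpha})$ into the Constantin--Lax--Majda equation \eqref{eq:CLM} to obtain the profile equation
\begin{equation*}
F + \tfrac{1}{\alpha} z F' + 2F\,H(F) = 0,
\end{equation*}
as in \eqref{eq:alpha}. Since $F$ is required to be odd on $\mathbb{R}$ (so that $HF$ is even), and smooth as a function of the variable $\zeta=z^\alpha$ on $\mathbb{R}^+$, it is natural to extend $F$ holomorphically into the upper half-plane $\mathbb{H}=\{z:\Im z>0\}$ by defining $z^\alpha = r^\alpha e^{i\alpha\theta}$ for $z=re^{i\theta}$, $0\le\theta\le\pi$.

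Next I would take the Hilbert transform of the profile equation and apply the Tricomi identity (as recorded in \eqref{eq:tricomi}) to the quadratic term $F\cdot H(F)$. This yields the companion equation
\begin{equation*}
HF + \tfrac{1}{\alpha} z (HF)' + (HF)^2 - F^2 = 0.
\end{equation*}
Setting $V(z):=F(z)+iH(F)(z)$ and adding $i$ times the companion to the original equation collapses the system to the single complex Bernoulli ODE
\begin{equation*}
V + \tfrac{1}{\alpha} z V' - i V^2 = 0,
\end{equation*}
because $2F\cdot HF + i\bigl((HF)^2-F^2\bigr) = -i(F+iHF)^2=-iV^2$. This is the main step.

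I would then integrate the Bernoulli ODE by dividing through by $V^2$ and setting $W=1/V$, which gives the linear equation $\tfrac{1}{\alpha}zW'-W=-i$, whose general solution on $\mathbb{H}$ is $W(z)=i+Cz^{\alpha}$ for an arbitrary complex constant $C$. Hence $V(z)=1/(i+Cz^\alpha)$ and $V$ is automatically holomorphic on $\mathbb{H}$ as long as $i+Cz^\alpha\ne 0$ there. The remaining (and main conceptual) step is to pin down $C$ uniquely: imposing that $F=\Re V$ is odd on $\mathbb{R}$ (equivalently, that the boundary value $V(x)$ satisfies $\Re V(-x)=-\Re V(x)$ for $x>0$) forces the Möbius image of the ray $\{Cz^\alpha:z>0\}$ to be positioned so that its reflection matches under conjugation; a short computation writing $C=|C|e^{i\varphi}$ and comparing $V(x)$ and $V(-x)$ shows that the only choice (up to the positive scaling $|C|$, which can be absorbed into the spatial rescaling of the self-similar variable) is $\varphi=\pi/2-\alpha\pi/2$, giving $C^{(\alpha)}=\sin(\alpha\pi/2)+i\cos(\alpha\pi/2)$. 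Separating real and imaginary parts of $V(x)=1/(i+C^{(\alpha)}x^\alpha)$ then recovers exactly \eqref{eq:F_alpha} and \eqref{eq:HF_alpha}, and the uniqueness of $C^{(\alpha)}$ yields the uniqueness claim of the proposition. The hardest point is verifying that the formula $H(F)=\Im V$ genuinely gives the Hilbert transform (not merely a solution of the coupled ODE system); this is handled by observing that $V$ is holomorphic on $\mathbb{H}$, decays as $|z|\to\infty$ in the closed upper half-plane, and is continuous up to the boundary $\mathbb{R}$, so $V$ is the Poisson/Cauchy extension of its boundary values and therefore $\Im V$ on $\mathbb{R}$ is the Hilbert transform of $\Re V$.
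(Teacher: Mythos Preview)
Your proposal is correct and follows essentially the same route as the paper: reduce the profile equation together with its Hilbert transform (via the Tricomi identity) to the single complex Bernoulli ODE $V+\tfrac{1}{\alpha}zV'-iV^2=0$, integrate to get $V=1/(i+Cz^\alpha)$, and then pin down $C=C^{(\alpha)}$ (uniquely up to positive real scaling) by imposing holomorphicity on the upper half-plane and oddness of $\Re V$ on $\mathbb{R}$. Your explicit mention of the Bernoulli substitution $W=1/V$ and of the Poisson/Cauchy-extension argument for why $\Im V$ really is $H(\Re V)$ simply spells out steps the paper leaves as ``easy to check''; there is no substantive difference in approach.
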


\begin{figure}
	\includegraphics[scale=0.3]{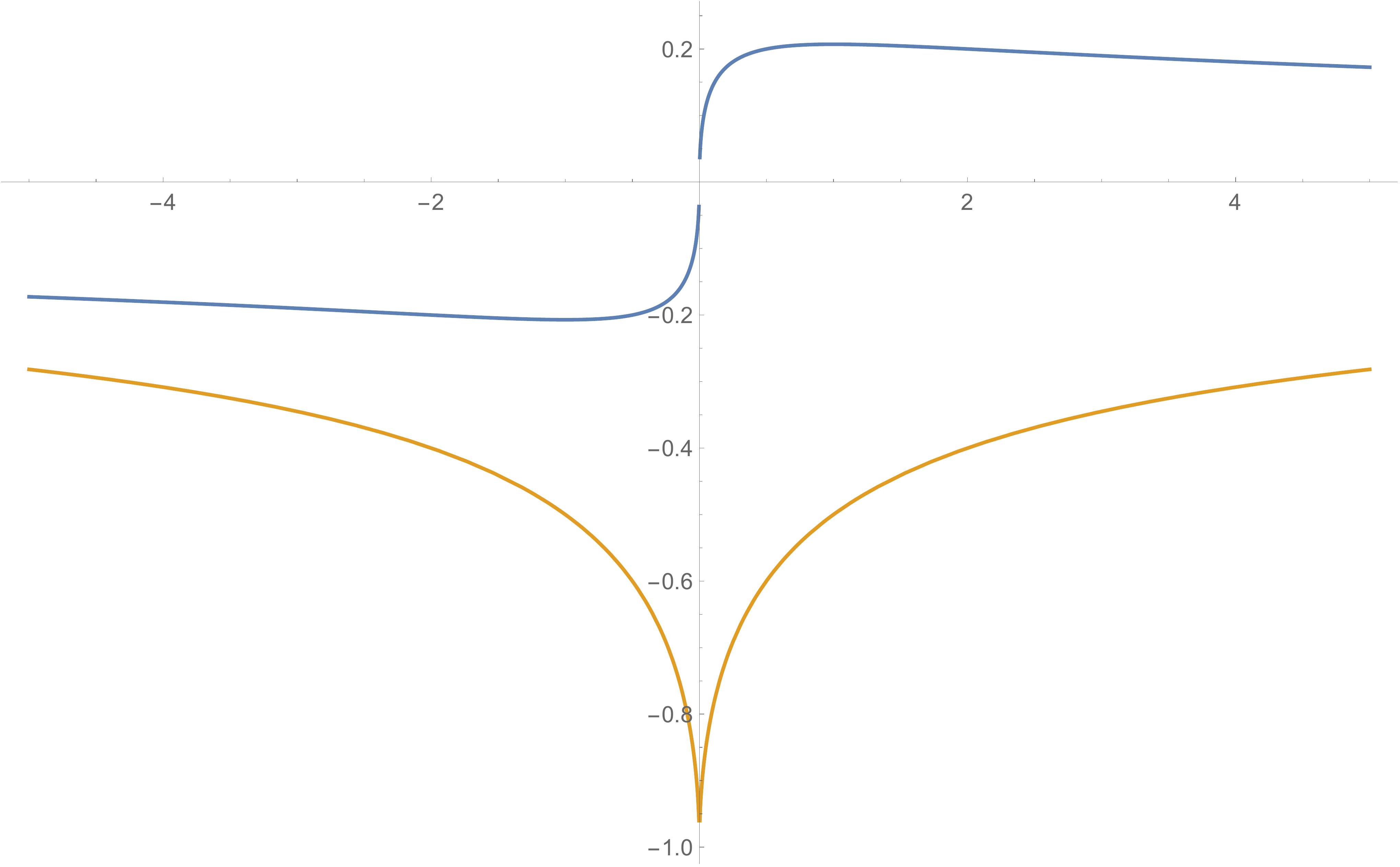} 
	\centering
	\caption{The functions $F^{(1/2)}$ (odd) and $H(F^{(1/2)})$ (even) plotted on $[-5,5]$.}
	\label{fig:C_alpha}
\end{figure}

\subsection{Self-similar H\"older continuous solutions for De Gregorio}

\subsubsection{The expansion in $a$ and the linear operator}

To seek $C^\alpha$ self-similar solutions to the De Gregorio equation, we now take the ansatz \begin{equation*}
\begin{split}
\omega(t,x) = \frac{1}{1-t} F \left( \frac{x}{(1-t)^{\frac{1+  \lambda(a)}{\alpha}}} \right), \qquad \lambda(0) = 0,
\end{split}
\end{equation*} as in the smooth case. In terms of $F$, we obtain  \begin{equation}\label{eq:DeG_resc_alpha}
\begin{split}
F(z) + \left( \frac{1+\lambda(a)}{\alpha} z + a \cdot  u_F(z) \right) F'(z) + 2F(z) HF(z) = 0. 
\end{split}
\end{equation} We expand $F$ and $\lambda$ again as \begin{equation}\label{eq:expansion_alpha}
\begin{split}
F(z) &= F^{(\alpha)}_0(z) + \sum_{n=1}^\infty a^n F_n(z),\qquad  \lambda(a) = \sum_{n=1}^\infty a^n \lambda_n
\end{split}
\end{equation} Each term in the expansions depend on $\alpha$ but we suppress from writing out the dependence.

Inserting the expansion \eqref{eq:expansion_alpha} into \eqref{eq:DeG_resc_alpha}, for each $0 < \alpha < 1$, we obtain with $F_0 = F^{(\alpha)}_0$ \begin{equation}\label{eq:identities_alpha}
\begin{split}
F_n + \frac{1}{\alpha}zF_n' + 2H(F_0) F_n + 2F_0 H(F_n) =  \sum_{j=0}^{n-1} \Lambda^{-1}(F_j) F'_{n-1-j} - \frac{1}{\alpha} \sum_{j=1}^n \lambda_j zF_{n-j}' - 2 \sum_{j=1}^{n-1} F_j H(F_{n-j})
\end{split}
\end{equation} for each $n \ge 1$. Just as in the case of smooth solutions, we write \eqref{eq:identities_alpha} in the form \begin{equation*}
\begin{split}
L(F_n) = G_n - \frac{1}{\alpha} \lambda_n zF_0' ,\qquad G_n = G_n(F_1,\cdots,F_{n-1};\lambda_1,\cdots,\lambda_{n-1})
\end{split}
\end{equation*} with the linear operator $L = L^{(\alpha)}$ defined by  \begin{equation*}
\begin{split}
Lf = f + \frac{1}{\alpha} zf' + 2H(F_0) f + 2F_0 H(f).
\end{split}
\end{equation*} For each $n \ge 1$, there is a unique $\lambda_n$ which makes the linear system solvable, and this in turn defines $F_n$. 

We shall take advantage of the fact that the functions $F_0$, $H(F_0)$, and then $\Lambda^{-1}(F_0)F_0'$ are smooth (actually, analytic) functions of $w := z^\alpha$ on the positive real axis $\mathbb{R}^+$. First, we write down an explicit formula for the inverse $L^{-1}$ in terms of the new variable $w$. Then, we will simply estimate the functions $F_1, F_2,$ and so on in $H^3(\mathbb{R}^+)$ with respect to $w$. From now on, let us use tildes to denote functions of $w$: given a function $f$ on $\mathbb{R}^+$, we set $\tilde{f}(w) := f(w^{1/\alpha})$. 

In particular, we write \begin{equation*}
\begin{split}
\tilde{F}_0 (w) = \frac{\sin\left(\frac{\alpha\pi}{2}\right)w}{1+ 2 \cos\left(\frac{\alpha\pi}{2}\right) w + w^2 },\qquad \tilde{H(F_0)} (w) = - \frac{1+ \cos\left( \frac{\alpha\pi}{2} \right)w}{1 + 2 \cos\left(\frac{\alpha\pi}{2}\right) w + w^2}
\end{split}
\end{equation*} and the linear system may be written as \begin{equation*}
\begin{split}
\tilde{L}\tilde{f} := \tilde{f} + w\tilde{f}' + 2 \tilde{H(F_0)} \cdot \tilde{f} + 2 \tilde{F_0} \cdot \tilde{H(f)} = \tilde{g}. 
\end{split}
\end{equation*} Taking the Hilbert transform of both sides, and using complex notation \begin{equation*}
\begin{split}
V(w) &= \tilde{F_0}(w) + i \tilde{H(F_0)}(w) = \frac{1}{i+C^{(\alpha)}w},
\end{split}
\end{equation*}
and 
 \begin{equation*}
\begin{split}
U(w) &= \tilde{f}(w) + i \tilde{H(f)}(w), \qquad G(w) = \tilde{g}(w) + i \tilde{H(g)}(w), 
\end{split}
\end{equation*} the linear system takes the form \begin{equation}\label{eq:linear_alpha_complex}
\begin{split}
U + wU' - 2iVU = G.
\end{split}
\end{equation} We view the variable $w$ as varying on the strip \begin{equation*}
\begin{split}
S^{(\alpha)} := \{ w \in \mathbb{C} : 0 \le  \arg(w) \le \frac{\pi}{\alpha}  \},
\end{split}
\end{equation*} as the right hand side $G$ as well as $V$ are holomorphic on $S^{(\alpha)}$ with continuous extension up to the boundary. In this section, for simplicity we use the word ``holomorphic'' to describe such functions.

\begin{lemma}
	Consider the differential equation \eqref{eq:linear_alpha_complex} on the sector $S^{(\alpha)}$, where $G = G(w)$ is holomorphic. Then, we have the following statements: \begin{itemize}
		\item There exists a holomorphic solution $U$ if and only if $G$ satisfies \begin{equation}\label{eq:consistency_alpha}
		\begin{split}
		G'(0) -2i \left( \sin\left(\frac{\alpha\pi}{2}\right) + i \cos\left(\frac{\alpha\pi}{2}\right) \right)  G(0) = 0.
		\end{split}
		\end{equation} Any such solution satisfies $U(0) = -G(0)$.
		\item There is a unique such $U$ once we require in addition that $U'(0) = 0$. 
		\item This unique solution is given explicitly by the integral formula \begin{equation}\label{eq:explicit_alpha}
		\begin{split}
		U(w) - U(0) & = \frac{w}{(1 + 2w \cos\left(\frac{\alpha\pi}{2}\right) + w^2 )^2} \cdot \left( -(1+w^2)\cos\left(\frac{\alpha\pi}{2}\right) - 2w + i (1-w^2)\sin\left(\frac{\alpha\pi}{2}\right)  \right) \\ &\qquad\times \left( \int_0^w \left( -\frac{1+s^2}{s}\cos\left(\frac{\alpha\pi}{2}\right) - 2 - i \frac{1-s^2}{s} \sin\left(\frac{\alpha\pi}{2}\right)  \right) \frac{\hat{G}(s)}{s} ds \right),
		\end{split}
		\end{equation} where \begin{equation*}
		\begin{split}
		\hat{G}(w):= G(w) -  \frac{1+ C^{(\alpha)} iw }{1 - C^{(\alpha)} iw }G(0) ,\qquad C^{(\alpha)} = \sin\left(\frac{\alpha\pi}{2}\right) + i \cos\left(\frac{\alpha\pi}{2}\right).
		\end{split}
		\end{equation*}
	\end{itemize}
\end{lemma}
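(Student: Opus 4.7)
The plan is to treat $U + wU' - 2iVU = G$ as a first-order linear ODE on $S^{(\alpha)}$ with a regular singular point at $w = 0$.  Since $V(w) = 1/(i + C^{(\alpha)}w)$ has its unique pole at $w = -i/C^{(\alpha)}$ (lying outside $S^{(\alpha)}$) and $V(0) = -i$, one has $1 - 2iV(0) = -1$, so the indicial exponent of the homogeneous problem equals $1$.  The one-dimensional space of holomorphic homogeneous solutions then accounts for exactly one free parameter, while the identities $U(0) = -G(0)$ and the consistency condition will emerge from matching low-order Taylor coefficients of the inhomogeneous equation at $w = 0$.

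Evaluating the ODE at $w = 0$ immediately gives $-U(0) = G(0)$, since the $wU'$ term drops and $1 - 2iV(0) = -1$.  Differentiating the equation and evaluating at the origin, the contributions of $U'(0)$ cancel because $2 - 2iV(0) = 0$, leaving $-2iV'(0)U(0) = G'(0)$; using $V'(0) = C^{(\alpha)}$ this reorganises to $G'(0) - 2iC^{(\alpha)}G(0) = 0$, and conversely shows that $U'(0)$ is unconstrained by the equation itself.  For the homogeneous problem, the direct computation $1 - 2iV(w) = (C^{(\alpha)}w - i)/(C^{(\alpha)}w + i)$ together with the partial-fraction decomposition
\begin{equation*}
\frac{C^{(\alpha)}w - i}{w(C^{(\alpha)}w + i)} = \frac{1}{w} - \frac{2C^{(\alpha)}}{C^{(\alpha)}w + i}
\end{equation*}
integrates to $U_{\mathrm{hom}}(w) = w/(C^{(\alpha)}w + i)^2$, which is holomorphic on $S^{(\alpha)}$ and satisfies $U_{\mathrm{hom}}'(0) = 1/i^2 = -1 \neq 0$.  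Consequently any two holomorphic solutions differ by a multiple of $U_{\mathrm{hom}}$, and the normalisation $U'(0) = 0$ pins down a unique one (once existence is known).

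For existence and the explicit formula I would use variation of parameters: the integrating factor $(C^{(\alpha)}w + i)^2/w$ turns the ODE into $\partial_w(U/U_{\mathrm{hom}}) = (C^{(\alpha)}s + i)^2 G(s)/s^2$.  Expanding the right-hand side near $s = 0$ shows that the $1/s^2$ coefficient equals $-G(0)$ while the $1/s$ coefficient equals $-G'(0) + 2iC^{(\alpha)}G(0)$, which vanishes by consistency.  Subtracting the explicit model $G(0)(1 + iC^{(\alpha)}w)/(1 - iC^{(\alpha)}w)$, which is holomorphic on $S^{(\alpha)}$, equals $G(0)$ at the origin, and has derivative $2iC^{(\alpha)}G(0)$ there, produces precisely the regularisation $\hat G$ in the statement, and a short check shows that $(C^{(\alpha)}s + i)^2 \hat G(s)/s^2$ extends holomorphically through $s = 0$.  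Integrating from $0$ to $w$, multiplying by $U_{\mathrm{hom}}$, and rewriting $(C^{(\alpha)}w + i)^2$ against the real polynomial $(C^{(\alpha)}w + i)(\overline{C^{(\alpha)}}w - i) = 1 + 2w\cos(\alpha\pi/2) + w^2$ to separate real and imaginary parts yields \eqref{eq:explicit_alpha}.  The main technical step is this final algebraic rewriting --- in particular verifying that after multiplying numerator and denominator of $w/(C^{(\alpha)}w+i)^2$ by $(\overline{C^{(\alpha)}}w-i)^2$ and simplifying via $(C^{(\alpha)})^2 = -e^{-i\alpha\pi}$, one recovers exactly the prefactor displayed in the lemma.
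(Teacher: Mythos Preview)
Your proof is correct and follows essentially the same route as the paper: reduce to a first-order linear ODE, read off $U(0)=-G(0)$ and the consistency condition from the first two Taylor coefficients at $w=0$, compute the homogeneous solution/integrating factor explicitly, and integrate after the $\hat G$ regularisation. One small slip: your displayed partial-fraction identity has a sign error on the left (it should read $\dfrac{i-C^{(\alpha)}w}{w(C^{(\alpha)}w+i)}$, i.e.\ $(2iV-1)/w$ rather than $(1-2iV)/w$), though you still land on the correct $U_{\mathrm{hom}}(w)=w/(C^{(\alpha)}w+i)^2$.
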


\begin{proof}
	The proof is strictly analogous to the corresponding lemma from the smooth case. 
	
	To begin with, the consistency condition \eqref{eq:consistency_alpha} simply implies that the holomorphic function $\hat{G}(s)/s$ vanishes at $s = 0$, therefore canceling the singularity $1/s$ inside the integral in \eqref{eq:explicit_alpha}. It is then clear that the formula \eqref{eq:explicit_alpha} defines a holomorphic function of $w$, and a straightforward computation shows that it provides a solution to the equation \eqref{eq:linear_alpha_complex}. 
	
	This explicit formula is easy to derive: first re-write \eqref{eq:linear_alpha_complex} as \begin{equation*}
	\begin{split}
	(1 - 2 iV) U + w U' = G,
	\end{split}
	\end{equation*} and subtracting the value at 0, we obtain \begin{equation*}
	\begin{split}
	-\frac{1}{w} \cdot  \frac{1+ C^{(\alpha)} iw }{1 - C^{(\alpha)} iw } (U - U(0)) +  (U- U(0))' = \frac{1}{w} \left( G(w) -  \frac{1+ C^{(\alpha)} iw }{1 - C^{(\alpha)} iw }G(0) \right) =: \frac{\hat{G}(w)}{w}. 
	\end{split}
	\end{equation*} The integrating factor can be computed explicitly: \begin{equation*}
	\begin{split}
	&\exp \left( \int^w \frac{1}{s} \cdot  \frac{1+ C^{(\alpha)} is }{1 - C^{(\alpha)} is }  ds  \right) \\ 
	&\qquad\qquad = \frac{w}{(1 + 2w \cos\left(\frac{\alpha\pi}{2}\right) + w^2 )^2} \cdot \left( -(1+w^2)\cos\left(\frac{\alpha\pi}{2}\right) - 2w + i (1-w^2)\sin\left(\frac{\alpha\pi}{2}\right)  \right),
	\end{split}
	\end{equation*} with the inverse \begin{equation*}
	\begin{split}
	\exp \left( - \int^w \frac{1}{s} \cdot  \frac{1+ C^{(\alpha)} is }{1 - C^{(\alpha)} is }  ds  \right) = -\frac{1+w^2}{w} \cdot \cos\left(\frac{\alpha\pi}{2}\right) - 2 - i \frac{1-w^2}{w} \cdot \sin\left(\frac{\alpha\pi}{2}\right).
	\end{split}
	\end{equation*} 
	
	This establishes \eqref{eq:explicit_alpha}, together with the fact that there is a one (complex) dimensional kernel spanned by the function  \begin{equation*}
	\begin{split}
	\frac{w}{(1 + 2w \cos\left(\frac{\alpha\pi}{2}\right) + w^2 )^2} \cdot \left( -(1+w^2)\cos\left(\frac{\alpha\pi}{2}\right) - 2w + i (1-w^2)\sin\left(\frac{\alpha\pi}{2}\right)  \right).
	\end{split}
	\end{equation*} Noting that any complex constant multiple of the above function has non-vanishing derivative at the origin, the uniqueness statement follows. Alternatively, one can simply use Taylor expansion around $w = 0$ directly in \eqref{eq:linear_alpha_complex} to deduce uniqueness.
	
	Lastly, the statement $G(0) = -U(0)$ simply follows from evaluating \eqref{eq:linear_alpha_complex} at $ w = 0$. Next, subtracting the values at 0 from both sides of \eqref{eq:linear_alpha_complex}, dividing both sides by $1/w$, and then taking the limit $w \rightarrow 0$, one sees that the condition \eqref{eq:consistency_alpha} is necessary to have a smooth solution. 
	
	This finishes the proof. \end{proof}

\begin{remark}
	Assuming further that $\Re (G(0)) = 0$ and the function $G(z^\alpha)$ is odd in $z$ when restricted onto the real axis, we have seen in Lemma \ref{lem:consistency_alpha} that the condition \eqref{eq:consistency_alpha} reduces to just having  \begin{equation}\label{eq:consistency_alpha2}
	\begin{split}
	\Re (G'(0)) + 2 \sin\left( \frac{\alpha\pi}{2} \right) \Im (G(0)) = 0.
	\end{split}
	\end{equation} Note that in the case $\alpha = 1$, this simply reduces back to $\Re(G'(0)) + 2\Im(G(0)) = 0$. 
\end{remark}

\subsubsection{Estimates for the inverse}

In this subsection, we shall restrict the variable $w$ onto the positive real axis $\mathbb{R}^+  $ and obtain precise norm estimates for the real and imaginary parts of the solution of the linear operator \eqref{eq:linear_alpha_complex} given in \eqref{eq:explicit_alpha}. 

%definition of the transformations

It will be convenient to define for each $ 0 < \alpha \le 1$ the transformations $\tilde{H}$ and $\tilde{\Lambda}^{-1}$ for functions defined on $\mathbb{R}^+$ in a way that \begin{equation*}
\begin{split}
\tilde{H}(\tilde{f})(w) := (Hf)(w^{1/\alpha}),\qquad (\tilde{\Lambda}^{-1}\tilde{f})(w) := (\Lambda^{-1}f)(w^{1/\alpha}).
\end{split}
\end{equation*} As an example, we have \begin{equation*}
\begin{split}
\tilde{H} \left( \frac{\sin\left(\frac{\alpha\pi}{2}\right)w}{1+ 2 \cos\left(\frac{\alpha\pi}{2}\right) w + w^2 } \right) = - \frac{1+ \cos\left( \frac{\alpha\pi}{2} \right)w}{1 + 2 \cos\left(\frac{\alpha\pi}{2}\right) w + w^2}.
\end{split}
\end{equation*} Let us write out an explicit integral representation for the operator $\tilde{H}$. From the definition, \begin{equation*}
\begin{split}
\tilde{H}(\tilde{f})(w) &= \frac{1}{\pi}\,p.v.\int_{\mathbb{R}^+} \frac{2tf(t)}{w^{2/\alpha}-t^2} dt = \frac{1}{\pi}\,p.v.\int_{\mathbb{R}^+} \frac{2t\tilde{f}(t^{\alpha})}{w^{2/\alpha}-t^2} dt \\
&= \frac{1}{\pi}\,p.v.\int_{\mathbb{R}^+} \frac{2s^{2/\alpha-1} }{w^{2/\alpha} - s^{2/\alpha}}\tilde{f}(s) ds .
\end{split}
\end{equation*} In the case $\alpha = 1/n$ for some integer $n \ge 1$, notice that the kernel is a rational function of $w$ and $s$. From this it is easy to obtain an $L^2$ estimate for the operator $\tilde{H} = \tilde{H}^{(n)}$, see Lemma \ref{lem:L2_hilbert_type} in the Appendix. From now on, we restrict ourselves to such values of $\alpha$. Next, note that the operator $\tilde{\Lambda}^{-1}$ has the representation \begin{equation*}
\begin{split}
(\tilde{\Lambda}^{-1}\tilde{f})(w) &:=  \Lambda^{-1}f(w^{1/\alpha}) = \int_0^{w^{1/\alpha}} Hf(s)ds = \int_0^w \tilde{H}\tilde{f}(t) \cdot  \frac{1}{\alpha}t^{\frac{1-\alpha}{\alpha}}dt.
\end{split}
\end{equation*}

Before we state the necessary $H^3$ estimates for the inverse $\tilde{L}^{-1}$, let us first write out the real part of $U(w)$ from \eqref{eq:explicit_alpha}. Denoting the real part by $\tilde{L}^{-1}(\tilde{g})$, we have: \begin{equation*}
\begin{split}
\tilde{L}^{-1}(\tilde{g}) = \tilde{L}^{-1,I}(\tilde{g}) + \tilde{L}^{-1,II}(\tilde{g})
\end{split}
\end{equation*} with  \begin{equation*}
\begin{split}
&\tilde{L}^{-1,I}(\tilde{g}) := \frac{w \left( -(1+w^2) \cos\left( \frac{\alpha\pi}{2}\right) - 2w \right) }{\left(1 + 2 \cos\left( \frac{\alpha\pi}{2}\right)w + w^2 \right)^2} \\
&\qquad\qquad \times \left[
\int_0^w \left( - \frac{1+s^2}{s} \cos\left( \frac{\alpha\pi}{2} \right)  - 2 \right) \cdot \frac{1}{s} \left( \tilde{g}(s) - \frac{s}{ 1 + 2 \cos\left( \frac{\alpha\pi}{2}\right)s + s^2  } \tilde{g}'(0) \right) ds \right. \\
& \left. \qquad\qquad\qquad + \int_0^w   \left( \frac{1-s^2}{s} \sin\left(\frac{\alpha\pi}{2}\right) \right) \cdot \frac{1}{s} \left( \tilde{H}\tilde{g}(s) - \frac{1-s^2}{1 + 2 \cos\left( \frac{\alpha\pi}{2}\right)s + s^2 }  \tilde{H}\tilde{g}(0)   \right) ds  \right],
\end{split}
\end{equation*} and \begin{equation*}
\begin{split}
&\tilde{L}^{-1,II}(\tilde{g}) := \frac{w  (1-w^2)\sin\left(\frac{\alpha\pi}{2}\right)   }{\left(1 + 2 \cos\left( \frac{\alpha\pi}{2}\right)w + w^2 \right)^2} \\
&\qquad\qquad \times \left[
\int_0^w \left(  \frac{1+s^2}{s} \cos\left( \frac{\alpha\pi}{2} \right) +   2 \right) \cdot \frac{1}{s}\left( \tilde{H}\tilde{g}(s) - \frac{1-s^2}{1 + 2 \cos\left( \frac{\alpha\pi}{2}\right)s + s^2 }  \tilde{H}\tilde{g}(0)   \right) \right.  ds \\
&\left. \qquad\qquad\qquad + \int_0^w   \left( \frac{1-s^2}{s} \sin\left(\frac{\alpha\pi}{2}\right) \right) \cdot \frac{1}{s}\left( \tilde{g}(s) - \frac{s}{ 1 + 2 \cos\left( \frac{\alpha\pi}{2}\right)s + s^2  } \tilde{g}'(0) \right)  ds  \right],
\end{split}
\end{equation*} where we have used the consistency condition \eqref{eq:consistency_alpha2}. 

We now split the above inverse operator into several pieces: we define for $\sigma \in \{ -1 , 0 , 1\}$
\begin{equation}\label{eq:T_alpha}
\begin{split}
T^{\sigma}(\tilde{g})(w) &:= P(w) \cdot \int_0^w s^\sigma \left( \frac{\tilde{g}(s)}{s} - \frac{1}{ 1 + 2 \cos\left( \frac{\alpha\pi}{2}\right)s + s^2  }\tilde{g}'(0) \right) ds
\end{split}
\end{equation} and \begin{equation}\label{eq:S_alpha}
\begin{split}
S^{\sigma}(\tilde{g})(w) &:= P(w) \cdot \int_0^w s^\sigma \left( \frac{\tilde{H}\tilde{g}(s)}{s} -  \frac{1-s^2}{s(1 + 2 \cos\left( \frac{\alpha\pi}{2}\right)s + s^2) }  \tilde{H}\tilde{g}(0) \right) ds \\
&= P(w) \cdot \int_0^w s^\sigma \left( \frac{\tilde{H}\tilde{g}(s)-\tilde{H}\tilde{g}(0)}{s} +  \frac{2\cos\left( \frac{\alpha\pi}{2}\right) + 2s}{1 + 2 \cos\left( \frac{\alpha\pi}{2}\right)s + s^2 }  \tilde{H}\tilde{g}(0) \right) ds
\end{split}
\end{equation} where the prefactor $P(w)$ equals \begin{equation*}
\begin{split}
P(w) = w \cdot \frac{ \left( -(1+w^2) \cos\left( \frac{\alpha\pi}{2}\right) - 2w \right) }{\left(1 + 2 \cos\left( \frac{\alpha\pi}{2}\right)w + w^2 \right)^2}.
\end{split}
\end{equation*}

\begin{lemma}\label{lem:keyest_alpha}
	Assume that $\tilde{E}$ and $\tilde{F}$ vanish at zero. Then we have the $H^3$ estimates of the form  \begin{equation}\label{eq:keyest_alpha}
	\begin{split}
	\V T^\sigma(\tilde{F}) \V_{H^3} + \alpha \cdot \V S^\sigma(\tilde{F}) \V_{H^3}  &\le c \V \tilde{F} \V_{H^3},
	\end{split}
	\end{equation}\begin{equation}\label{eq:keyest_alpha_D}
	\begin{split}
	\V T^\sigma(w\tilde{F}') \V_{H^3} + \alpha \cdot \V S^\sigma(w\tilde{F}') \V_{H^3}  &\le c \V \tilde{F} \V_{H^3},
	\end{split}
	\end{equation} and % \tilde{\Lambda}^{-1}(\tilde{E}) \cdot    \tilde{F}'
	\begin{equation}\label{eq:keyest_alpha_HD}
	\begin{split}
	\V T^\sigma\left( (\Lambda^{-1}E \cdot F')^{\widetilde{ }} \right) \V_{H^3} + \alpha \cdot \V S^\sigma \left( (\Lambda^{-1}E \cdot F')^{\widetilde{ }} \right)\V_{H^3}  &\le c \V \tilde{E} \V_{H^3} \cdot \V \tilde{F} \V_{H^3} ,
	\end{split}
	\end{equation} for each $\sigma \in \{ -1, 0 ,1 \}$, where $c > 0$ is some absolute constant independent on $0 < \alpha \le 1$. 
\end{lemma}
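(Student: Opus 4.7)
The plan is to follow the same architecture as Lemma \ref{lem:H3} in the smooth case, now in the variable $w=z^\alpha$ with $\alpha=1/n$, with two additional bookkeeping tasks: keeping every constant uniform in $\alpha$, and tracking the powers of $1/\alpha$ introduced by the Hilbert transform $\tilde H$.  First I would verify that the prefactor $P(w)$ and all the correction denominators in \eqref{eq:T_alpha}--\eqref{eq:S_alpha} (which depend on $\alpha$ only through the bounded quantity $\cos(\alpha\pi/2)\in[0,1]$) are uniformly Schwartz in $\alpha\in(0,1]$, with $P(w)=O(w)$ as $w\to 0$ and $P(w)=O(w^{-1})$ as $w\to\infty$; the Hardy inequality \eqref{eq:Hardy} on $\mathbb{R}^+$ is the main technical workhorse throughout.

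The $T^\sigma$ estimates then proceed exactly as in the smooth case: by interpolation with a preliminary $L^2$-bound it suffices to consider the case where all three derivatives fall on the integral, and one derivative on the integral unwraps the integrand into $P(w)w^\sigma[\tilde F(w)/w-(\text{correction})]$, after which the remaining two derivatives are absorbed by the algebra property of $H^2$ combined with Hardy.  The $S^\sigma$ estimates are structurally identical, but every encounter with $\tilde H$ or its boundary value $\tilde H\tilde g(0)$ contributes a factor $1/\alpha=n$: for the $L^2$-bound one invokes Lemma \ref{lem:L2_hilbert_type}, and for the pointwise value one uses the explicit formula $\tilde H\tilde g(0)=-(2/(\pi\alpha))\int_0^\infty \tilde g(w)w^{-1}\,dw$ together with Hardy.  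Since $\tilde H$ appears exactly once in each term of $S^\sigma(\tilde g)$, multiplication by $\alpha$ on the left-hand side of \eqref{eq:keyest_alpha}--\eqref{eq:keyest_alpha_HD} compensates this loss exactly.

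The delicate ingredient is the bilinear estimate \eqref{eq:keyest_alpha_HD}, which must be proved on the $T^\sigma$ side with \emph{no} $\alpha$-loss at all.  The key is the representation
\begin{equation*}
(\Lambda^{-1}E\cdot F')^{\sim}(w)=w\,\tilde F'(w)\int_0^1 \tilde H\tilde E(uw)\,u^{n-1}\,du,
\end{equation*}
obtained by the substitution $t=uw$ in $\tilde\Lambda^{-1}\tilde E(w)=n\int_0^w \tilde H\tilde E(t)t^{n-1}\,dt$ and combining with $(F')^{\sim}(w)=n^{-1}w^{1-n}\tilde F'(w)$: the two factors of $n$ cancel.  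The weight $nu^{n-1}\,du$ is a probability measure concentrating at $u=1$, and Minkowski's inequality yields
\begin{equation*}
\Bigl\|\int_0^1 \tilde H\tilde E(uw)\,u^{n-1}\,du\Bigr\|_{L^2_w(\mathbb{R}^+)}\le\|\tilde H\tilde E\|_{L^2}\int_0^1 u^{n-3/2}\,du\lesssim \frac{1}{n}\|\tilde H\tilde E\|_{L^2},
\end{equation*}
so the $1/n$ cancels precisely the $n$-loss from $\|\tilde H\|_{L^2\to L^2}\lesssim n$ in Lemma \ref{lem:L2_hilbert_type}.  Higher $w$-derivatives are treated analogously: differentiating once produces $\tilde H\tilde E(w)+(1-n)\int_0^1 \tilde H\tilde E(uw)u^{n-1}\,du$, both pieces of which enjoy the same $n$-versus-$1/n$ cancellation, and cross-terms of the form $u^{n-1}\partial_w[\tilde H\tilde E(uw)]$ are handled by integration by parts in $u$.

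The principal obstacle is precisely this delicate bookkeeping of the $n$-loss from $\tilde H$ against the $1/n$-gain from the concentrating probability measure $nu^{n-1}\,du$ through three derivatives of nested integrals; without the averaging lemma above one would get a $1/\alpha$ loss in the $T^\sigma$ bound of \eqref{eq:keyest_alpha_HD} that could not be absorbed.  Once the uniform $H^3$-bound $\|(\Lambda^{-1}E\cdot F')^{\sim}\|_{H^3}\le C\|\tilde E\|_{H^3}\|\tilde F\|_{H^3}$ is established with absolute constant $C$, feeding it into the $T^\sigma$ and $S^\sigma$ bounds from the previous paragraph completes the proof.
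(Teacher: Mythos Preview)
Your overall architecture matches the paper's: reduce to the smooth-case template of Lemmas \ref{lem:L2}--\ref{lem:H3}, track the single factor of $1/\alpha$ coming from $\tilde H$, and for \eqref{eq:keyest_alpha_HD} identify a compensating $\alpha$-gain in the structure of $(\Lambda^{-1}E\cdot F')^{\widetilde{\ }}$. Your averaging representation and Minkowski argument are correct and are in fact an inline proof of the paper's Hardy-type inequality \eqref{eq:Hardy_alpha}; the paper instead writes $(\Lambda^{-1}E\cdot F')^{\widetilde{\ }}(w)=\alpha\cdot\bigl[w^{-1/\alpha}\tilde\Lambda^{-1}\tilde E(w)\bigr]\cdot w\tilde F'(w)$ and invokes \eqref{eq:Hardy_alpha} on the bracketed factor, which is the same cancellation packaged differently.

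There is, however, a genuine gap in your final step. You propose to establish
\[
\bigl\|(\Lambda^{-1}E\cdot F')^{\widetilde{\ }}\bigr\|_{H^3}\le C\,\|\tilde E\|_{H^3}\|\tilde F\|_{H^3}
\]
and then feed this into \eqref{eq:keyest_alpha}. But $(\Lambda^{-1}E\cdot F')^{\widetilde{\ }}(w)=w\tilde F'(w)\cdot A(w)$ contains $\tilde F'$, which lies only in $H^2$ when $\tilde F\in H^3$; the product is therefore at best $H^2$, and the displayed $H^3$ bound is false in general. This is exactly why the paper (and Lemma \ref{lem:H3}) states three \emph{separate} estimates rather than a single $\|T^\sigma(\tilde g)\|_{H^3}\le c\|\tilde g\|_{H^3}$: the operators $T^\sigma,S^\sigma$ do not gain a full derivative, so inputs of the form $w\tilde F'$ or $(\Lambda^{-1}E\cdot F')^{\widetilde{\ }}$ require a tailored argument.

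The fix is the one you already described in your second paragraph for \eqref{eq:keyest_alpha}: open up $T^\sigma$, let one derivative fall on the integral to remove it, and then bound the resulting integrand in $H^2$. At that stage your averaging lemma (equivalently \eqref{eq:Hardy_alpha}) controls the factor $A(w)=\int_0^1\tilde H\tilde E(uw)u^{n-1}du$ in $H^2$ by $C\|\tilde E\|_{H^2}$ uniformly in $\alpha$, and $w\tilde F'\in H^2$ closes via the algebra property. For $S^\sigma$ one must additionally handle $\tilde H$ applied to $(\Lambda^{-1}E\cdot F')^{\widetilde{\ }}$; the paper does this by decomposing $\tilde H=\sum_j\tilde H_j$ and using the identity \eqref{eq:div_by_w} as a substitute for \eqref{eq:Hilbert_divid_z}, a step you should incorporate.
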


\begin{proof}
	The proof is completely analogous to those of the estimates in Lemmas \ref{lem:L2} and \ref{lem:H3}. 
	
	\medskip
	
	\textbf{(i) proof of \eqref{eq:keyest_alpha}}
	
	\medskip
	
	We begin with the proof of the inequality \eqref{eq:keyest_alpha}. Consider first the function $T^\sigma(\tilde{F})$ in the most difficult case $\sigma = -1$: after a simple rewriting, we have that \begin{equation*}
	\begin{split}
	T^{-1}(\tilde{F})(w) = P(w) \cdot \int_0^w \frac{1}{s}\left( \frac{\tilde{F}(s)}{s} - \tilde{F}'(0) \right) + \frac{2 \cos\left(\frac{\alpha\pi}{2}\right) + s}{1 + 2\cos \left(\frac{\alpha\pi}{2}\right)s + s^2} \tilde{F}'(0) ds.
	\end{split}
	\end{equation*}  We first deal with the second term: \begin{equation*}
	\begin{split}
	P(w) \tilde{F}'(0) \cdot \int_0^w \frac{2 \cos\left(\frac{\alpha\pi}{2}\right) + s}{1 + 2\cos \left(\frac{\alpha\pi}{2}\right)s + s^2} ds.
	\end{split}
	\end{equation*} In $L^2$, this is bounded by \begin{equation*}
	\begin{split}
	C \V \tilde{F}\V_{H^2} \cdot \V P(w) \cdot \int_0^w \frac{2 \cos\left(\frac{\alpha\pi}{2}\right) + s}{1 + 2\cos \left(\frac{\alpha\pi}{2}\right)s + s^2} ds\V_{L^\infty} \le 	C \V \tilde{F}\V_{H^2} \cdot \V P(w) \ln(1 + w) \V_{L^\infty} \le 	C \V \tilde{F}\V_{H^2},
	\end{split}
	\end{equation*} and it is straightforward to show that this is bounded in $H^3$ again by a constant multiple of $\V \tilde{F}\V_{H^2}$. Next, the first term of $T^{-1}(\tilde{F})$ is bounded in $L^2$ by \begin{equation*}
	\begin{split}
	\V wP(w)\V_{L^\infty} \cdot \V \frac{1}{w}\int_0^w \frac{1}{s}\left( \frac{\tilde{F}(s)}{s} - \tilde{F}'(0) \right) ds \V_{L^2} \le C \V \tilde{F} \V_{H^2}
	\end{split}
	\end{equation*} just as in the smooth case, using the Hardy inequality. To obtain the $H^3$ bound, we may assume that a derivative falls on the integral to get \begin{equation*}
	\begin{split}
	\frac{P(w)}{w} \cdot \left( \frac{\tilde{F}(w)}{w} - \tilde{F}'(0)  \right),
	\end{split}
	\end{equation*} which is bounded in $H^2$ by a constant multiple of $\V \tilde{F}\V_{H^3}$ again using the Hardy inequality. 
	
	The cases $\sigma = 0, 1$ are only simpler: we can simply decompose $T^\sigma(\tilde{F})$ into \begin{equation*}
	\begin{split}
	\left( P(w)\int_0^w s^{\sigma - 1} \tilde{F}(s) ds \right) - \left( P(w) \tilde{F}'(0) \cdot \int_0^w \frac{s^\sigma}{1+ 2 \cos \left(\frac{\alpha\pi}{2}\right) s + s^2} ds \right),
	\end{split}
	\end{equation*} and then clearly both terms can be bonded in $H^3$ by $C \V \tilde{F}\V_{H^3}$.  
	
	The corresponding estimate for the function $S^\sigma(\tilde{F})$ can be carried out in a similar way. Let us only point out that we lose by a factor of $1/\alpha$ simply because of the loss in $H^3$-estimate from Lemma \ref{lem:L2_hilbert_type}: \begin{equation*}
	\begin{split}
	\V \tilde{H}\tilde{F} \V_{H^3} \le \frac{C}{\alpha} \V \tilde{F}\V_{H^3}.
	\end{split}
	\end{equation*}
	
	\medskip
	
	\textbf{(ii) proof of \eqref{eq:keyest_alpha_D}}
	
	\medskip
	
	We proceed to the proof of \eqref{eq:keyest_alpha_D}, starting with the case of $T^{-1}(w\tilde{F}')$. We need to estimate: \begin{equation*}
	\begin{split}
	&P(w) \int_0^w \frac{1}{s} \left( \tilde{F}'(s) - \frac{1}{1+2 \cos \left(\frac{\alpha\pi}{2}\right) s + s^2} \tilde{F}'(0)  \right) ds \\
	& \qquad = P(w) \cdot \left( \int_0^w \frac{\tilde{F}'(s) - \tilde{F}'(0)}{s} ds + \tilde{F}'(0) \cdot \int_0^w \frac{2\cos \left(\frac{\alpha\pi}{2}\right) +s}{1+ 2\cos \left(\frac{\alpha\pi}{2}\right) s + s^2} ds      \right),
	\end{split}
	\end{equation*}  but the corresponding proof from the smooth case carries over to this one, which bounds the above in $H^3$ by a constant multiple of $\tilde{F}$ in $H^3$. The cases $\sigma = 0, 1$ are simpler and can be done as in the case of \eqref{eq:keyest_alpha}. 
	
	Given the desired bound for $T^{\sigma}(w\tilde{F}')$, the corresponding result for $S^\sigma(w\tilde{F}')$ similarly follows, using the convenient fact that \begin{equation*}
	\begin{split}
	\tilde{H}\left( w \tilde{F}' \right) = w (\tilde{H}(\tilde{F}))',
	\end{split}
	\end{equation*} see \eqref{eq:identity} in the Appendix. 
	
	\medskip
	
	\textbf{(iii) proof of \eqref{eq:keyest_alpha_HD}}
	
	\medskip
	
	To begin with, it is necessary to note that \begin{equation*}
	\begin{split}
	(\Lambda^{-1}E \cdot F')^{\widetilde{ }}(w)  = (\frac{1}{z}(\Lambda^{-1}E)(z) \cdot zF'(z))^{\widetilde{ }}(w) = w^{-\frac{1}{\alpha}}\tilde{\Lambda}^{-1} (\tilde{E})(w) \cdot \alpha w\tilde{F}'(w)
	\end{split}
	\end{equation*} and that its derivative at $w = 0$ equals \begin{equation*}
	\begin{split}
	\alpha \cdot \tilde{H}(\tilde{E})(0) \cdot \tilde{F}'(0). 
	\end{split}
	\end{equation*}
	
	Let us restrict ourselves to obtaining an $H^3$ estimate for $T^{-1}((\Lambda^{-1}E \cdot F')^{\widetilde{ }})$. We need to bound \begin{equation*}
	\begin{split}
	T^{-1}(\tilde{\Lambda}^{-1}(\tilde{E})\cdot \tilde{F}')(w) &= P(w) \cdot \alpha\int_0^w \frac{1}{s}\left( s^{-\frac{1}{\alpha}}\tilde{\Lambda}^{-1}(\tilde{E})(s) \cdot \tilde{F}'(s) - \tilde{H}(\tilde{E})(0)\cdot \tilde{F}'(0) \right) ds  \\
	&\qquad + P(w)\cdot \alpha\tilde{H}(\tilde{E})(0)\cdot \tilde{F}'(0)\cdot \int_0^w \frac{2 \cos\left(\frac{\alpha\pi}{2}\right) + s}{1 + 2\cos \left(\frac{\alpha\pi}{2}\right)s + s^2}  ds.
	\end{split}
	\end{equation*} To estimate the latter term, it suffices to observe that \begin{equation*}
	\begin{split}
	\tilde{H}\left(  \Lambda^{-1}(E) \cdot F'  \right)^{\tilde{}}(0) &= H\left(\Lambda^{-1}(E) \cdot F'\right)(0) \\
	&=-\frac{2}{\pi}\int_0^\infty \frac{1}{z} \left( \int_0^z H(E)(s)ds\right) F'(z) dz \\
	&= -\frac{2}{\pi}\int_0^\infty \left( w^{-\frac{1}{\alpha}} \int_0^w \tilde{H}(\tilde{E})(t) \cdot \frac{1}{\alpha}t^{\frac{1-\alpha}{\alpha}} dt \right) \cdot \tilde{F}'(w) dw 
	\end{split}
	\end{equation*} (after a change of variable) so that in particular,  \begin{equation}\label{eq:Linfty_alpha}
	\begin{split}
	\alpha\cdot \left|\tilde{H}\left(  \Lambda^{-1}(E) \cdot F'  \right)^{\tilde{}}(0)\right|\le C\alpha \V \tilde{H}(\tilde{E}) \V_{L^2} \cdot \V \tilde{F}\V_{H^1} \le C \V \tilde{E}\V_{H^3}\cdot\V \tilde{F}\V_{H^3}
	\end{split}
	\end{equation} using the Hardy-type inequality \eqref{eq:Hardy_alpha}. The former term can be estimated in $H^3$ by following along the lines of the corresponding proof from Lemmas \ref{lem:L2} and \ref{lem:H3}, using the Hardy-type inequality \eqref{eq:Hardy_alpha} instead of the usual Hardy inequality.
	
	Regarding $S^{-1}((\Lambda^{-1}E \cdot F')^{\widetilde{ }})$, one again just needs to follow along the arguments given in the smooth case. The only difference is that one needs first to decompose the operator $\tilde{H}$ into $\sum_{j=-n+1}^{n-1} \tilde{H}_j$ where $n = 1/\alpha$ (see \eqref{eq:piece0},\eqref{eq:pieces} in the Appendix for the definition of the pieces $\tilde{H}_j$), and then use the identity \eqref{eq:div_by_w} which is a convenient substitute for \eqref{eq:Hilbert_divid_z}.
\end{proof}

Given the above estimates, let us conclude the proof of Theorem \ref{thm:main2}. 

\begin{proof}[Proof of Theorem \ref{thm:main2}]
	Let us first proceed to show that, with some absolute constants $c > 0$ and $c_0 > 0$ uniform over $0 < \alpha \le 1$, we have the bounds \begin{equation*}
	\begin{split}
	\V \tilde{F}_n \V_{H^3} \le c\alpha (c_0\alpha)^{n}, \qquad |\lambda_n| \le c (c_0 \alpha)^n.
	\end{split}
	\end{equation*}  for all $n \ge 0$. Note that in the case $n = 0$, these estimates trivially hold.
	
	We set $ \V \tilde{F}_n \V_{H^3} =: \alpha^{n+1} \mu_n$ , $ | \lambda_n | =: \alpha^n l_n $ and let us write down the set of inequalities for the sequences $\{ \mu_n \}$ and $\{ l_n \}$. 
	
	To begin with, we recall that in terms of the original variable $z$, \begin{equation*}
	\begin{split}
	L(F_n) = G_n - \frac{1}{\alpha} \lambda_n z F_0', 
	\end{split}
	\end{equation*} with \begin{equation*}
	\begin{split}
	G_n(z) = \sum_{j=0}^{n-1} \Lambda^{-1}(F_j) \cdot F'_{n-1-j} - \frac{1}{\alpha}\sum_{j=1}^{n-1} \lambda_j zF_{n-j}' - 2 \sum_{j=1}^{n-1} H(F_{n-j}) \cdot F_j
	\end{split}
	\end{equation*} and \begin{equation*}
	\begin{split}
	H(G_n)(z) = \sum_{j=0}^{n-1} H\left(\Lambda^{-1}(F_j) \cdot F'_{n-1-j}\right) - \frac{1}{\alpha}\sum_{j=1}^{n-1} \lambda_j zH(F_{n-j})' +  \sum_{j=1}^{n-1} \left( F_{n-j} \cdot F_j - H(F_{n-j}) \cdot H(F_j) \right).
	\end{split}
	\end{equation*} Also recall that $\lambda_n$ is defined by \begin{equation*}
	\begin{split}
	\lambda_n = \frac{1}{\sin\left( \frac{\alpha\pi}{2} \right)} \tilde{G}_n'(0) + 2 \tilde{H}(\tilde{G}_n)(0). 
	\end{split}
	\end{equation*}
	
	A straightforward computation shows that $\tilde{G}_n'(0)$ equals \begin{equation*}
	\begin{split}
	\sum_{j=0}^{n-1} \tilde{H}(\tilde{F}_j)(0) \cdot \alpha \tilde{F}'_{n-1-j}(0) - \sum_{j=1}^{n-1}\lambda_j \tilde{F}_{n-j}'(0) - 2\sum_{j=1}^{n-1} \tilde{H}(\tilde{F}_{n-j})(0) \cdot \tilde{F}_{n-j}'(0).
	\end{split}
	\end{equation*} Next, recall that we had \begin{equation*}
	\begin{split}
	\left| \tilde{H}\left(  \Lambda^{-1}(F_j) \cdot F'_{n-1-j}  \right)^{\tilde{}}(0) \right|\le C \V \tilde{H}(\tilde{F}_j) \V_{L^2} \cdot \V \tilde{F}_{n-1-j}\V_{H^1}
	\end{split}
	\end{equation*} from \eqref{eq:Linfty_alpha}. Since $\tilde{H}(\tilde{G}_n)(0)$ equals \begin{equation*}
	\begin{split}
	\sum_{j=0}^{n-1} \tilde{H}\left(  \Lambda^{-1}(F_j) \cdot F'_{n-1-j}  \right)^{\tilde{}}(0) - \sum_{j=1}^{n-1} \tilde{H}(\tilde{F}_{n-j})(0) \cdot \tilde{H}(\tilde{F}_j)(0),
	\end{split}
	\end{equation*} we finally deduce that \begin{equation*}
	\begin{split}
	|\lambda_n| &\le \frac{C}{\alpha} \left( \sum_{j=0}^{n-1} \mu_j \alpha^j \cdot \alpha \cdot \mu_{n-1-j} \alpha^{n-j}  + \sum_{j=1}^{n-1} l_j \alpha^j \cdot \mu_{n-j} \alpha^{n-j+1} + \sum_{j=1}^{n-1} \mu_{n-j} \alpha^{n-j} \cdot \mu_j \alpha^{j+1} \right) \\
	&\qquad + C \left( \sum_{j=0}^{n-1} \mu_j \alpha^j \cdot \mu_{n-1-j} \alpha^{n-j}+ \sum_{j=1}^{n-1} \mu_{n-j} \alpha^{n-j} \cdot \mu_j \alpha^j \right),
	\end{split}
	\end{equation*} using the $H^3(\mathbb{R}) \subset L^\infty(\mathbb{R}) $ embedding together with the estimate \begin{equation*}
	\begin{split}
	\V \tilde{H}(\tilde{F_j})\V_{H^3} \le \frac{C}{\alpha} \V \tilde{F}_j\V_{H^3}. 
	\end{split}
	\end{equation*} Equivalently, \begin{equation}\label{eq:l_est_final}
	\begin{split}
	l_n \le C\left( \mu_0\mu_{n-1}  + \sum_{j=1}^{n-1} \left( \mu_j \mu_{n-1-j} + l_j \mu_{n-j} + \mu_{n-j}\mu_j \right)   \right)
	\end{split}
	\end{equation} with some absolute constant $C > 0$ uniform on $0 < \alpha \le 1$. 
	
	Now, given the value of $\lambda_n$, \begin{equation*}
	\begin{split}
	\tilde{F}_n:=\tilde{L}^{-1}\left( \tilde{G}_n - \lambda_n w \tilde{F}_0' \right)
	\end{split}
	\end{equation*} can be written as a linear combination of $T^\sigma$, $S^{\sigma'}$ with $\sigma,\sigma' \in \{ -1, 0 , 1\}$, and it is important to notice that whenever we use the operator $S^{\sigma'}$, its coefficient in the expansion of $\tilde{L}^{-1}$ comes with a factor of $\alpha$. Then, applying Lemma \ref{lem:keyest_alpha} to each term in $\tilde{G}_n$, we deduce that \begin{equation*}
	\begin{split}
	\mu_n \alpha^{n+1} \le C \left(  l_n \alpha^n \cdot \alpha  \right) + C \alpha^{n+1} \sum_{j=1}^{n-1} \left( \mu_j \mu_{n-1-j} + l_j \mu_{n-j} + \mu_j \mu_{n-j} \right),
	\end{split}
	\end{equation*} or equivalently, \begin{equation*}
	\begin{split}
	\mu_n \le C \left( l_n + \mu_0 \mu_{n-1} +  \sum_{j=1}^{n-1} \left( \mu_j \mu_{n-1-j} + l_j \mu_{n-j} + \mu_j \mu_{n-j} \right)  \right).
	\end{split}
	\end{equation*} Now that the form of the sequence of inequalities for $\{ l_n\}, \{ \mu_n \}$ are equivalent with that of $\{ |\lambda_n| \}, \{ \mu_n \}$ from the smooth case, we can run the exact same argument to deduce the desired bounds. 
	
	This shows that the series \begin{equation*}
	\begin{split}
	F^{(\alpha)}(z) &= F^{(\alpha)}_0(z) + \sum_{n=1}^\infty a^n F_n^{(\alpha)}(z),\qquad
	\lambda^{(\alpha)}(a) = \sum_{n=1}^\infty a^n \lambda_n^{(\alpha)}
	\end{split}
	\end{equation*} are convergent for some interval $a \in (-1/(c_0\alpha),1/(c_0\alpha))$ on which it can be also guaranteed that $\lambda^{(\alpha)}(a) > -1$. 
	
	Next, it is easy to see that the self-similar profile $F^{(\alpha)}$ indeed belongs to $C^\alpha(\mathbb{R})$: it suffices to observe that $\tilde{F}^{(\alpha)}$ belongs to $\dot{C}^1 \cap L^\infty $. To show decay of the function $F^{(\alpha)}(z)$, we simply argue using the equation \eqref{eq:DeG_resc_alpha} just as we did in the proof of Theorem \ref{thm:main1prime}. Indeed, note that if $F$ solves $$F+\frac{1+\lambda}{\alpha} zF'+ 2FH(F)+a\Lambda^{-1}(F)  F'=0$$ and if $F=\tilde{F}(z^\alpha)$ with $\tilde{F}\in H^3$ (as we have shown above) then $\tilde{F}$ solves 
	$$ \tilde{F}(w) + (1+\lambda)w \tilde{F}'(w) + 2\tilde{F}\cdot \tilde{H}(\tilde{F}) + a w^{-\frac{1}{\alpha}} \tilde{\Lambda}^{-1}(\tilde{F}) \cdot w\tilde{F}'(w) = 0.
	$$ Proceeding as in the proof of Theorem \ref{thm:main1prime}, we obtain that $\tilde{F}$ decays like $w^{-\frac{1}{1+\lambda}}$, and therefore $F$ decays like $z^{-\frac{\alpha}{1+\lambda}}.$ Now we are done.

\end{proof}

%	Next, from the equation \eqref{eq:DeG_resc_alpha} for $F^{(\alpha)}$, it is direct to see that the function $zF^{(\alpha)'}(z)$ belongs to $L^\infty$.\footnote{It is actually straightforward to show that $w\tilde{F}^{(\alpha)'}(z)$ also belongs to $H^3$ (and actually in $H^m$ for any $m \ge 0$). To do this, one would write down an explicit integral formula for $w\tilde{F}_n^{(\alpha)'}$ and then establish an appropriate $H^3$-bound along the iteration sequence.} 

\section{A method to get singularity formation in the full range}\label{sec:circle}

To close the paper, we would like to mention another method which likely could lead to finite-time singularity formation for $C^\alpha$ solutions to the De Gregorio model which are periodic. Consider De Gregorio's model:
$$\partial_t \omega+2u\partial_x\omega=2\partial_x u\omega.$$
Assume that the data is odd in $x$, $2\pi$ periodic, and positive in $(0,\pi)$. Suppose further that $$|\omega_0(x)|\geq C (\sin(x))^\alpha$$ on $(0,\pi)$ for some $\alpha<1$ and some $C>0$. Now define $$f_0(x):=\int_0^x \frac{dy}{\omega_0(y)}.$$ Since $\omega_0$ is bounded from below as above and since $\alpha<1$, $f_0$ is well-defined. In general, if we define $$f(t,x)=\int_{0}^x \frac{dy}{\omega(t,y)}$$ we see that $f$ is a $2\pi$ periodic even function satisfying a transport equation:

$$\partial_t f + 2u\partial_x f=0,$$
$$u=-\Lambda^{-1}(\frac{1}{\partial_x f}),$$  with $f_0(x)\approx|\sin(x)|^\alpha$
near $x=0$. 
To prove that $\omega$ becomes singular in finite time, it suffices simply to prove that there is a time where $f$ ceases to have a cusp at $x=0$ in the sense  that the quantity $$A(t):=\lim_{x\rightarrow 0}\frac{|f(x,t)|}{|x|^\alpha}$$ hits zero in finite time.  Since the velocity field $u$ is directed away from the origin, $A(t)$ is certainly a decreasing function. However, to prove that $A(t)$ actually hits zero in finite time, it is necessary to understand the operator $f\rightarrow\Lambda^{-1}(\frac{1}{\partial_x f})$ more precisely.  

This reformulation of the problem allows us to clearly see how the case of $\alpha<1$ is distinguished from that of $\alpha\geq 1$, since $f$ cannot even be defined from $\omega$ if $\omega$ is $C^1$ and vanishes. This leads one to conjecture that the $C^1$ case and the $C^\alpha$ case ($\alpha<1$) are actually quite different. Finally, for the Okamoto-Sakajo-Wunsch models one can define $f$ similarly, and the right definition for $f$ in that case is $$f(t,x)=\int_0^x \frac{dy}{\omega(t,y)^{a/2}}.$$ For $a<2$, $f$ can be defined even when $\omega$ is $C^\infty.$ This leads one to further conjecture that singularity formation will occur in the full range $a<2$.

\section{Acknowledgements}

The authors would like to thank Tej-Eddine Ghoul, Vu Hoang, Hao Jia, Andrew Majda, Nader Masmoudi, Huy Nguyen, Vladimir Sverak, and Vlad Vicol for helpful remarks and stimulating discussions.  T.\ M.\ Elgindi acknowledges funding from NSF grant DMS-1402357.

\appendix

\section{Appendix}

\subsection{Properties of the Hilbert transform}

We collect a few simple properties of the Hilbert transform. It will be implicitly assumed that the functions $f$ and $g$ are in the domain of the Hilbert transform. 

\begin{lemma}[The Tricomi identity]
	Given two functions $f$ and $g$, 
	\begin{equation}\label{eq:tricomi}
	\begin{split}
	H(fg) = H(f)g + fH(g) + H(Hf Hg). 
	\end{split}
	\end{equation}
\end{lemma}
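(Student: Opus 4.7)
My plan is to prove the Tricomi identity by exploiting the complex-analytic characterization of the Hilbert transform: for a sufficiently nice real-valued function $u$ on $\mathbb{R}$, the combination $u + iHu$ is the boundary value of a function holomorphic on the upper half-plane which decays at infinity. This characterization immediately allows one to convert products of Hilbert-transform pairs into other Hilbert-transform pairs, which is exactly what the identity expresses.

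Concretely, I would first set $F(z) = f(x) + i (Hf)(x)$ and $G(z) = g(x) + i (Hg)(x)$ and note that both extend holomorphically to the upper half-plane with the appropriate decay. Then I would form the pointwise product on the real axis,
\begin{equation*}
(f + iHf)(g + iHg) = \bigl( fg - Hf \cdot Hg \bigr) + i \bigl( f \cdot Hg + g \cdot Hf \bigr),
\end{equation*}
and observe that the right-hand side is again the boundary value of a function holomorphic in the upper half-plane (namely $FG$) with the same decay. Invoking the characterization in reverse, the imaginary part must equal the Hilbert transform of the real part:
\begin{equation*}
H\bigl( fg - Hf \cdot Hg \bigr) = f \cdot Hg + g \cdot Hf.
\end{equation*}
Using linearity of $H$ and rearranging yields precisely $H(fg) = H(f) g + f H(g) + H(Hf \cdot Hg)$.

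To make this rigorous I would argue first under the assumption that $f,g \in \mathcal{S}(\mathbb{R})$, where the Poisson extensions and the identifications with $L^2$ boundary values of Hardy-space functions are standard, and then extend to the relevant class (e.g.\ $L^p$ with $1 < p < \infty$) by density and the $L^p$ boundedness of $H$, together with an appropriate bilinear estimate to control $H(Hf \cdot Hg)$. The only mildly subtle point is ensuring that $Hf \cdot Hg$ lies in a space on which $H$ acts boundedly; in the Schwartz setting this is immediate, and for the applications in this paper the functions involved are smooth and decaying, so density is painless.

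The main obstacle is essentially cosmetic rather than conceptual: one must check that the holomorphic product $FG$ has the correct decay properties at infinity so that its imaginary part really is the Hilbert transform of its real part (as opposed to differing by a harmonic function like a constant). This will follow from the decay of $f$ and $g$, but if one wishes to avoid decay assumptions entirely, one can alternatively give a direct principal-value computation starting from
\begin{equation*}
\pi^2 (Hf)(x)(Hg)(x) = \text{p.v.} \iint \frac{f(y) g(z)}{(x-y)(x-z)} \, dy \, dz
\end{equation*}
and decomposing the kernel via the partial-fraction identity $\frac{1}{(x-y)(x-z)} = \frac{1}{y-z}\left(\frac{1}{x-y} - \frac{1}{x-z}\right)$ to recover $H(fg)$ plus the two remaining terms. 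Either route works; I prefer the complex-analytic one for brevity.
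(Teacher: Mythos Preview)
Your proposal is correct and takes essentially the same approach as the paper: form the holomorphic boundary values $f+iHf$ and $g+iHg$, multiply them, and read off that the imaginary part of the product is the Hilbert transform of the real part, yielding $H(fg - Hf\,Hg) = fHg + gHf$. The paper's proof is terser---it omits your discussion of decay, density, and the alternative partial-fraction route---but the underlying idea is identical.
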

\begin{proof}
	There are complex analytic functions $u$ and $v$ on the upper half plane whose restrictions on the real line equals \begin{equation*}
	\begin{split}
	u &= f + iHf,\\
	v &= g + iHg.
	\end{split}
	\end{equation*} Then, it suffices to note that \begin{equation*}
	\begin{split}
	uv = fg-HfHg + i(fHg + gHf),
	\end{split}
	\end{equation*} since then \begin{equation*}
	\begin{split}
	H(fg - HfHg) = fHg + gHf
	\end{split}
	\end{equation*} holds.
\end{proof}

The following identities are very well-known:
\begin{lemma}
	Assuming that $zf(z) \in L^2(\mathbb{R})$, the Hilbert transform of $zf(z)$ is related to the Hilbert transform of $f$ via \begin{equation}\label{eq:Hilbert_multip_z}
	\begin{split}
	H\left(wf(w)\right)(z) = z Hf(z) - \frac{1}{\pi} \int_{ \mathbb{R}} f(w)dw.
	\end{split}
	\end{equation} Moreover, if one assumes that $(f(z)-f(0))/z \in L^2(\mathbb{R})$, then we have \begin{equation}\label{eq:Hilbert_divid_z}
	\begin{split}
	H\left( \frac{f(w)-f(0)}{w} \right)(z) = \frac{Hf(z) - Hf(0)}{z}. 
	\end{split}
	\end{equation}
\end{lemma}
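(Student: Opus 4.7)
The plan is to derive both identities directly from the definition
\[
Hf(z) = \frac{1}{\pi}\,\mathrm{p.v.}\int_{\mathbb{R}} \frac{f(w)}{z-w}\,dw,
\]
using purely algebraic manipulations of the kernel together with the distributional identity $\mathrm{p.v.}\int_{\mathbb{R}} dw/(z-w) = 0$, which follows from symmetric cutoffs and reflects $H(1)\equiv 0$.

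For \eqref{eq:Hilbert_multip_z}, I would write $w = z - (z-w)$ inside the numerator of the defining integral for $H(wf)$. The kernel then splits algebraically as
\[
\frac{w f(w)}{z-w} = \frac{z\, f(w)}{z-w} - f(w),
\]
and the hypothesis $wf(w) \in L^2$, together with the $L^2$ framework which guarantees the required integrability of $f$ itself, lets us integrate the two pieces separately: the first returns $zHf(z)$ and the second returns $-\frac{1}{\pi}\int_{\mathbb{R}} f(w)\,dw$, which is the claim.

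For \eqref{eq:Hilbert_divid_z}, the key trick is the partial-fraction identity
\[
\frac{1}{w(z-w)} = \frac{1}{z}\left(\frac{1}{w} + \frac{1}{z-w}\right), \qquad w \notin \{0,z\},
\]
inserted into the defining integral for $H\!\left((f(w)-f(0))/w\right)(z)$. This splits the expression into two principal-value integrals, each divided by $z$. The principal values $\mathrm{p.v.}\int dw/w = 0$ and $\mathrm{p.v.}\int dw/(z-w) = 0$ make the $f(0)$ correction vanish in each piece. What remains from the $1/w$ piece equals $-Hf(0)$, by comparing with $Hf$ evaluated at $0$; what remains from the $1/(z-w)$ piece equals $Hf(z)$. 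Summing and dividing by $z$ yields exactly $(Hf(z)-Hf(0))/z$.

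The only subtle point I anticipate is the rigorous justification of splitting the principal value, since the individual integrals after the partial-fraction decomposition are only conditionally convergent. This is handled in the standard way: introduce a symmetric cutoff removing $\varepsilon$-neighborhoods of both singularities $w=0$ and $w=z$, perform the algebraic splitting on the resulting absolutely convergent integrals, and pass to the limit $\varepsilon \to 0^+$. The $L^2$ hypotheses guarantee that the left-hand sides are well-defined $L^2$ functions and that each limit exists. Alternatively, both identities can be verified by recognizing $f + iHf$ as the boundary value of a holomorphic function on the upper half-plane and checking that both sides of the identities extend holomorphically with matching decay at infinity — an approach consistent with the complex-variable viewpoint used throughout the main body of the paper.
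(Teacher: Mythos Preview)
Your proposal is correct and follows essentially the same approach as the paper: for \eqref{eq:Hilbert_multip_z} you split $w = z - (z-w)$ exactly as the paper does, and for \eqref{eq:Hilbert_divid_z} your partial-fraction computation is just a spelled-out version of what the paper dismisses as ``strictly analogous.'' Your added remarks on justifying the split of the principal value via symmetric cutoffs are a welcome bit of extra care, but the underlying argument is the same.
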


\begin{proof}
	Note that \begin{equation*}
	\begin{split}
	H\left(wf(w)\right)(z) := p.v.\, \frac{1}{\pi} \int \frac{wf(w)}{z-w} dw =  \frac{1}{\pi} \int \frac{(w-z)f(w)}{z-w} dw  + z\left( p.v.\,  \frac{1}{\pi} \int \frac{f(w)}{z-w} dw \right).
	\end{split}
	\end{equation*} The proof of the second identity is strictly analogous.
\end{proof}

We state and prove a few elementary facts regarding the transforms $H^{(\alpha)}$ of Hilbert-type which appear in Section \ref{sec:Hoelder}. We first recall the definition of these transforms. Given some positive integer $n$, and a function $f$ defined on $\mathbb{R}^+$, we consider the transformation $\tilde{H}^{(n)}$ for $w > 0$: \begin{equation}\label{eq:Hilbert_type}
\begin{split}
\tilde{H}^{(n)}(f)(w) = \frac{1}{\pi}\, p.v. \int_{\mathbb{R}^+} \frac{nt^{2n-1}}{w^{2n}- t^{2n}} f(t) dt .
\end{split}
\end{equation}

First, we may decompose the kernel as follows:

\begin{lemma}
	We have \begin{equation*}
	\begin{split}
	\tilde{H}^{(n)} (f)(w) &= \frac{1}{\pi} \sum_{j=1}^{2n}  \int_{\mathbb{R}^+} \frac{-\zeta_{2n}^j}{w- \zeta_{2n}^j t} f(t) dt \\
	&= \frac{1}{\pi}\, p.v.  \int_{\mathbb{R}^+} \frac{2t}{w^2-t^2} dt + \sum_{j=1}^{n} \frac{1}{\pi} \int_{\mathbb{R}^+} \frac{2t - 2 \Re(\zeta_{2n}^j)w }{w^2 - 2 \Re(\zeta_{2n}^j) wt + t^2 }f(t) dt 
	\end{split}
	\end{equation*} where $\zeta_{2n} := \exp\left(i \pi/n  \right)$ is the primitive $2n$-th root of unity.
\end{lemma}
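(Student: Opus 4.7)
The proof is a direct partial-fraction decomposition of the kernel $\frac{nt^{2n-1}}{w^{2n}-t^{2n}}$, followed by a pairing of complex-conjugate residues to reassemble real kernels. The underlying algebraic fact is that the $2n$-th roots of unity are precisely $\{\zeta_{2n}^j\}_{j=1}^{2n}$, which yields the factorization $w^{2n}-t^{2n}=\prod_{j=1}^{2n}(w-\zeta_{2n}^j t)$. Since all $2n$ roots are simple, the standard residue formula applies.

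First I would compute the residues directly. At the root $w=\zeta_{2n}^j t$ one has
$$A_j \;=\; \frac{nt^{2n-1}}{\tfrac{d}{dw}(w^{2n}-t^{2n})\big|_{w=\zeta_{2n}^j t}} \;=\; \frac{nt^{2n-1}}{2n(\zeta_{2n}^j t)^{2n-1}} \;=\; \tfrac{1}{2}\zeta_{2n}^{-j(2n-1)} \;=\; \tfrac{1}{2}\zeta_{2n}^j,$$
using $\zeta_{2n}^{2n}=1$. Collecting these residues (and absorbing the overall constant) gives the first claimed identity. Next, pair the term indexed by $j$ with the one indexed by $2n-j$, noting $\zeta_{2n}^{2n-j}=\overline{\zeta_{2n}^j}$; a short computation shows
$$\frac{\zeta_{2n}^j}{w-\zeta_{2n}^j t}+\frac{\overline{\zeta_{2n}^j}}{w-\overline{\zeta_{2n}^j}t} \;=\; \frac{2\Re(\zeta_{2n}^j)\,w-2t}{w^{2}-2\Re(\zeta_{2n}^j)\,wt+t^{2}},$$
so each non-real conjugate pair produces a real kernel whose denominator is the corresponding real quadratic factor of $w^{2n}-t^{2n}$. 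The two real roots $\zeta_{2n}^{n}=-1$ and $\zeta_{2n}^{2n}=1$ combine to yield the single kernel proportional to $\frac{t}{w^{2}-t^{2}}$, which accounts for the $\frac{2t}{w^{2}-t^{2}}$ term in the second display. Summing over all pairs reassembles the original kernel.

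The main subtlety — rather than a true obstacle — is justifying the interchange of the finite sum with the principal-value integral. For every non-real $\zeta_{2n}^j$, the factor $w-\zeta_{2n}^j t$ does not vanish for $w,t>0$, so those individual integrals converge absolutely in $t$ for each fixed $w>0$. Only the pair arising from $j=n,\,2n$ produces a genuine singularity on $\mathbb{R}^{+}$ (at $w=t$), and the symmetry of the summand under $j\leftrightarrow 2n-j$ makes the pairing compatible with the principal-value prescription at $w=t$. Hence the p.v.\ appears only in front of the $\frac{2t}{w^{2}-t^{2}}$ term, and the remaining terms may be written as ordinary Lebesgue integrals, completing the proof.
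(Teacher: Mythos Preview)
Your approach is correct and is exactly what the paper has in mind: the paper's own proof consists of the single sentence ``This is a purely algebraic statement which can be checked directly,'' and your partial-fraction computation via residues at the $2n$-th roots of unity, followed by the pairing $\zeta_{2n}^{2n-j}=\overline{\zeta_{2n}^j}$ to recover real quadratic kernels, is precisely that direct check. Your remark that only the real roots $\zeta_{2n}^n=-1$ and $\zeta_{2n}^{2n}=1$ produce a singularity on $\mathbb{R}^+$ (hence the sole $p.v.$ in front of the $\frac{2t}{w^2-t^2}$ term) is a nice clarification that the paper leaves implicit.
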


\begin{proof}
	This is a purely algebraic statement which can be checked directly. 
\end{proof}

We set \begin{equation}\label{eq:piece0}
\begin{split}
\tilde{H}^{(n)}_0 (f)(w) = \frac{1}{\pi}\, p.v.  \int_{\mathbb{R}^+} \frac{2t}{w^2-t^2} f(t) dt
\end{split}
\end{equation} and for $j \in \{ \pm 1, \cdots, \pm (n-1) \}$ \begin{equation}\label{eq:pieces}
\begin{split}
\tilde{H}^{(n)}_j (f)(w) = \frac{1}{\pi} \int_{\mathbb{R}^+} \frac{-\zeta_{2n}^j}{w - \zeta_{2n}^j t }f(t) dt .
\end{split}
\end{equation} Note that $\tilde{H}^{(n)}_0 $ is exactly the Hilbert transform of the odd function $f(|w|) \mathrm{sgn}(w)$ restricted onto $\mathbb{R}^+$. In particular, \begin{equation*}
\begin{split}
\V \tilde{H}^{(n)}_0(f) \V_{L^2} \le \V f\V_{L^2}. 
\end{split}
\end{equation*}

The following lemma follows directly from the definition of the operators $\tilde{H}^{(n)}_j$. 

\begin{lemma}
	For $1 \le |j| < n$, we have the identities \begin{equation}\label{eq:mult_by_w}
	\begin{split}
	\tilde{H}^{(n)}_j \left( t f( t ) \right)(w) =  -\frac{1}{\zeta_{2n}^j} \cdot w\tilde{H}^{(n)}_j(f)(w) - \frac{1}{\pi}\int_{\mathbb{R}^+} f(t)dt,
%	\tilde{H}^{(n)}\left( t f( t ) \right)(w) = w \cdot \, \left(  \tilde{H}^{(n)}_0(f)(w) + \sum_{|j|=1}^n \zeta_{2n}^j \cdot \tilde{H}^{(n)}_j(f)(w) \right)  + \frac{2n}{\pi}\int_{\mathbb{R}^+} f(t)dt,
	\end{split}
	\end{equation} \begin{equation}\label{eq:div_by_w}
	\begin{split}
	\tilde{H}^{(n)}_j \left( \frac{f(t) - f(0)}{t} \right)(w) = \zeta_{2n}^j \cdot \frac{\tilde{H}^{(n)}_j f(w) - \tilde{H}^{(n)}_j f(0)}{w},
%	\tilde{H}^{(n)}\left( \frac{f(t) - f(0)}{t} \right)(w) = \frac{\tilde{H}^{(n)}_0 f(w) - \tilde{H}^{(n)}_0 f(0) }{w} +  \sum_{|j| = 1}^n \zeta_{2n}^j \cdot \frac{\tilde{H}^{(n)}_j f(w) - \tilde{H}^{(n)}_j f(0)}{w},
	\end{split}
	\end{equation} and  \begin{equation}\label{eq:diff}
	\begin{split}
	\tilde{H}_j^{(n)}(f')(w) = -\zeta^j_{2n} \cdot \tilde{H}^{(n)}_j(f)'.
	\end{split}
	\end{equation} As a consequence, we obtain the identity \begin{equation}
	\begin{split}
	\tilde{H}_j^{(n)}(t f'(t))(w) = w (\tilde{H}_j^{(n)}f)'(w),
	%	\tilde{H}^{(n)}(t f'(t))(w) = w (\tilde{H}^{(n)}f)'(w).
	\end{split}
	\end{equation} which in particular implies that \begin{equation}\label{eq:identity}
	\begin{split}
	\tilde{H}^{(n)}(t f'(t))(w) = w (\tilde{H}^{(n)}f)'(w).
%	\tilde{H}^{(n)}(t f'(t))(w) = w (\tilde{H}^{(n)}f)'(w).
	\end{split}
	\end{equation}
\end{lemma}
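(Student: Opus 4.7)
The plan is that every identity in this lemma is a direct consequence of the explicit kernel representation \eqref{eq:pieces}, $\tilde{H}^{(n)}_j(f)(w) = \frac{1}{\pi}\int_{\mathbb{R}^+} \frac{-\zeta_{2n}^j}{w - \zeta_{2n}^j t} f(t)\,dt$, together with one application of integration by parts. The strategy is to verify \eqref{eq:mult_by_w}, \eqref{eq:div_by_w}, and \eqref{eq:diff} separately for each single-mode operator $\tilde{H}^{(n)}_j$, then combine these with the classical identities \eqref{eq:Hilbert_multip_z} for the $j=0$ piece $\tilde{H}^{(n)}_0$ (which is nothing but the ordinary Hilbert transform of the odd extension of $f$), and finally sum over $j \in \{-(n-1),\ldots, n-1\}$ to obtain \eqref{eq:identity}.

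First, for \eqref{eq:mult_by_w}, I would use the algebraic identity
\[
\frac{-\zeta_{2n}^j t}{w - \zeta_{2n}^j t} \;=\; 1 \;-\; \frac{w}{w - \zeta_{2n}^j t}
\]
inside the integral, and identify the second piece, after pulling out the factor of $w$, with $(w/(-\zeta_{2n}^j))\tilde{H}^{(n)}_j(f)(w)$ via the defining integral. For \eqref{eq:div_by_w}, I would use the partial-fraction decomposition
\[
\frac{1}{t(w-\zeta_{2n}^j t)} \;=\; \frac{1}{w}\!\left(\frac{1}{t} \;+\; \frac{\zeta_{2n}^j}{w-\zeta_{2n}^j t}\right),
\]
multiply through by $-\zeta_{2n}^j$, and recognize the two resulting integrals as $\tilde{H}^{(n)}_j(f)(0)/w$ and a rescaled copy of $\tilde{H}^{(n)}_j(f)(w)/w$ — the subtraction of $f(0)$ is precisely what makes the $1/t$ term integrable near $t=0$ and also ties the two pieces together into $(\tilde{H}^{(n)}_j f(w) - \tilde{H}^{(n)}_j f(0))/w$.

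For \eqref{eq:diff}, the key observation is the kernel relation
\[
\partial_t\!\left[\frac{-\zeta_{2n}^j}{w-\zeta_{2n}^j t}\right] \;=\; -\zeta_{2n}^j\,\partial_w\!\left[\frac{-\zeta_{2n}^j}{w-\zeta_{2n}^j t}\right].
\]
Integrating by parts in $t$ and using decay at infinity (together with the fact that the boundary term at $t=0$ is absorbed into the defining conventions, or cancels in context), one passes the derivative from $f$ onto the kernel, picking up the factor $-\zeta_{2n}^j$. Once all three identities are established, the consequence $\tilde{H}_j^{(n)}(tf'(t))(w) = w(\tilde{H}_j^{(n)}f)'(w)$ follows by applying \eqref{eq:mult_by_w} to the function $f'$ and then substituting \eqref{eq:diff}; the residual constant $-\frac{1}{\pi}\int f'\,dt$ coming from \eqref{eq:mult_by_w} combines with the boundary contribution of \eqref{eq:diff} to cancel, leaving exactly $w\,(\tilde{H}^{(n)}_j f)'(w)$. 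Summing over $j$ and handling the $j = 0$ term via the classical identity \eqref{eq:Hilbert_multip_z} (applied to the odd extension) gives \eqref{eq:identity}.

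The step requiring the most care will be the bookkeeping of boundary contributions at $t=0$ and $t=\infty$ when integrating by parts, and the tracking of signs and constants as we assemble the consequence from \eqref{eq:mult_by_w} and \eqref{eq:diff}; these are precisely the places where the identities are not purely formal and where the hypothesis of suitable decay of $f$ (implicit in $f$ being in the domain of $\tilde{H}^{(n)}$) is used. Once the single-mode identities are correct with their signs, the assembly for \eqref{eq:identity} is automatic since $\tilde{H}^{(n)} = \tilde{H}^{(n)}_0 + \sum_{0<|j|<n}\tilde{H}^{(n)}_j$ and each piece satisfies the same intertwining relation with the operation $f(t) \mapsto tf'(t)$.
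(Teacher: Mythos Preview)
Your proposal is correct and is precisely the approach the paper intends: the paper's entire proof reads ``follows directly from the definition of the operators $\tilde{H}^{(n)}_j$,'' and you have supplied the explicit algebraic identities and the integration-by-parts step that make this direct verification go through. Your remark that the $j=0$ piece must be handled separately via the classical Hilbert-transform identity \eqref{eq:Hilbert_multip_z} for the odd extension is also exactly right, and your observation that the boundary contributions at $t=0$ in \eqref{eq:diff} and the constant $-\tfrac{1}{\pi}\int f'$ in \eqref{eq:mult_by_w} must be tracked together when assembling the consequence is the only place any genuine care is needed.
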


\begin{lemma}\label{lem:L2_hilbert_type}
	For each $1 \le |j| \le n$, we have the estimate \begin{equation*}
	\begin{split}
	\V \tilde{H}^{(n)}_j (f)\V_{L^2} \le C \ln\left( 1 + \frac{1}{\sin^2\left(\frac{j\pi}{n}\right)} \right)  \V f\V_{L^2}
	\end{split}
	\end{equation*} with some absolute constant $C > 0$. In particular, for each $n \ge 1$, \begin{equation}\label{eq:L2_Hilbert_alpha}
	\begin{split}
	\V \tilde{H}^{(n)}(f)\V_{L^2} \le Cn \V f\V_{L^2}. 
	\end{split}
	\end{equation} Similarly, for any integer $m \ge 1$, we have \begin{equation}\label{eq:Hm_Hilbert_alpha}
	\begin{split}
	\V \tilde{H}^{(n)}(f)\V_{H^m} \le C_m n \V f\V_{H^m}. 
	\end{split}
	\end{equation}
\end{lemma}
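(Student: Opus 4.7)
The plan is to exploit the scale-invariance of each kernel and reduce the $L^2$ estimate to a one-dimensional integral inequality. Writing $K_j(w,t) := -\zeta_{2n}^j / (\pi(w - \zeta_{2n}^j t))$, I observe that $K_j(\lambda w, \lambda t) = \lambda^{-1} K_j(w,t)$ for every $\lambda > 0$, so Schur's test with the power weight $\phi(s) = s^{-1/2}$ is the natural choice. Setting $\theta = j\pi/n$ and performing the substitution $s = t/w$, one computes
$$\int_0^\infty |K_j(w,t)|\, \phi(t)\, dt = \phi(w)\cdot I(\theta), \qquad I(\theta) := \frac{1}{\pi}\int_0^\infty \frac{s^{-1/2}}{\sqrt{1 - 2s\cos\theta + s^2}}\,ds,$$
and a completely symmetric identity holds after interchanging the roles of $w$ and $t$ (using $|w-\zeta_{2n}^j t| = |1-\zeta_{2n}^{-j}(w/t)|\cdot t$). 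Schur's test therefore delivers $\V \tilde H^{(n)}_j f\V_{L^2(\mathbb{R}^+)} \le I(\theta)\,\V f\V_{L^2(\mathbb{R}^+)}$.

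The main technical step is to estimate $I(\theta)$ sharply. Making the substitution $s = e^u$, which turns the radicand into $1-2s\cos\theta+s^2 = 2e^u(\cosh u - \cos\theta)$, I rewrite
$$I(\theta) = \frac{1}{\pi\sqrt{2}} \int_{-\infty}^{\infty} \frac{du}{\sqrt{\cosh u - \cos\theta}}.$$
The integrand decays like $e^{-|u|/2}$ as $|u|\to\infty$, so the tails contribute $O(1)$. On the bulk $|u|\le 1$, the elementary inequality $\cosh u - \cos\theta \ge 2\sin^2(\theta/2) + u^2/2$ reduces the local integral to a constant multiple of $\int_0^1 du/\sqrt{\sin^2(\theta/2) + u^2} = \mathrm{arcsinh}(1/\sin(\theta/2))$. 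Combining the contributions yields $I(\theta) \le C\ln(1 + 1/\sin^2(\theta/2)) \le C'\ln(1 + 1/\sin^2\theta)$, which is the first estimate in the lemma.

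To deduce \eqref{eq:L2_Hilbert_alpha}, note that $\tilde H^{(n)} = \tilde H^{(n)}_0 + \sum_{1\le |j|\le n-1}\tilde H^{(n)}_j$, and $\tilde H^{(n)}_0$ is the standard Hilbert transform of the odd extension, hence $L^2$-bounded with constant independent of $n$. For the remaining pieces, the bound $\sin(j\pi/n) \ge 2j/n$ valid for $1\le j\le n/2$, together with the symmetry $j\mapsto n-j$, reduces the sum $\sum_{j=1}^{n-1}\ln(1+1/\sin^2(j\pi/n))$ to a constant multiple of $\sum_{j=1}^{\lfloor n/2\rfloor}\ln(n/j)$, which Stirling's formula bounds by $O(n)$. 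For \eqref{eq:Hm_Hilbert_alpha}, the commutation identity \eqref{eq:diff}, $\tilde H^{(n)}_j(f')(w) = -\zeta_{2n}^j (\tilde H^{(n)}_j f)'(w)$, shows that each derivative passes through $\tilde H^{(n)}_j$ up to a unimodular phase; summing over $j$ and applying the $L^2$ estimate to $\partial^k f$ gives $\V \partial^k(\tilde H^{(n)} f)\V_{L^2}\le Cn\V \partial^k f\V_{L^2}$ for each $k$, and summing over $0\le k\le m$ yields the desired $H^m$ bound.

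The hard part will be extracting the precise logarithmic dependence of $I(\theta)$ on $\theta$ near the singularity of the integrand at $s=1$. Without the correct $\ln(1/\sin^2\theta)$ scaling, the summation over $j$ would produce superlinear growth in $n$ and fail to give the crucial bound $\V \tilde H^{(n)}\V_{L^2\to L^2}\lesssim n$ that drives the smallness condition $|a|\lesssim \alpha$ in Theorem~\ref{thm:main2}.
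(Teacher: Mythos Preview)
Your proof is correct and the core strategy --- exploiting the degree $-1$ homogeneity of the kernel via a Schur/Minkowski argument with weight $s^{-1/2}$ --- is the same as the paper's. The execution differs in two small ways: the paper splits $\tilde{H}^{(n)}_j$ into real and imaginary parts and handles each via a three-region decomposition of the integral $\int_0^\infty \frac{|s-\cos\theta|}{s^2-2s\cos\theta+1}\,s^{-1/2}\,ds$, whereas you bound the full complex kernel $|K_j|$ at once and use the substitution $s=e^u$ to reduce $I(\theta)$ to $\int_{\mathbb{R}} (\cosh u - \cos\theta)^{-1/2}\,du$, which makes the singularity structure at $u=0$ transparent. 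Both routes recover the same $\ln(1+1/\sin^2\theta)$ bound and the final summation step (Stirling-type bound on $\sum_j \ln(n/j)$) is identical; your route is arguably the cleaner of the two, since it avoids the real/imaginary split and the casework on the sign of $\cos\theta$.
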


\begin{proof}
	Without loss of generality, we assume that $f$ is real and consider the real and the imaginary parts of $\tilde{H}^{(n)}_j$ separately. 
	
	Regarding the real part, we need to bound \begin{equation*}
	\begin{split}
	\V \frac{1}{\pi} \int_{\mathbb{R}^+} \frac{-\Re(\zeta^j_{2n}) w + t }{w^2 - 2\Re(\zeta^j_{2n})wt + t^2} f(t) dt \V_{L^2_w},
	\end{split}
	\end{equation*} and note that $\Re(\zeta^j_{2n}) = \cos\left(\frac{j\pi}{n}\right)$. First making the change of variable $t = ws$ and then using the Minkowski inequality, the above quantity is bounded by \begin{equation*}
	\begin{split}
	\frac{1}{\pi} \int_{\mathbb{R}^+} \frac{\left| s - \cos\left(\frac{j\pi}{n}\right) \right|}{s^2 - 2\cos\left(\frac{j\pi}{n}\right)s + 1} \V f(ws) \V_{L^2_w} ds \le C\V f\V_{L^2} \cdot \int_{\mathbb{R}^+} \frac{\left| s - \cos\left(\frac{j\pi}{n}\right) \right|}{s^2 - 2\cos\left(\frac{j\pi}{n}\right)s + 1} \cdot \frac{1}{s^{1/2}} ds.
	\end{split}
	\end{equation*} Then it suffices to show that \begin{equation*}
	\begin{split}
	\int_{\mathbb{R}^+} \frac{\left| s - \cos\left(\frac{j\pi}{n}\right) \right|}{s^2 - 2\cos\left(\frac{j\pi}{n}\right)s + 1} \cdot \frac{1}{s^{1/2}} ds \le  C \ln\left( 1 + \frac{1}{\sin^2\left(\frac{j\pi}{n}\right)} \right)
	\end{split}
	\end{equation*} for some universal $C > 0$. To see this, we consider the case when $\cos(j\pi/n) \ge 0$ (the other case can be treated similarly and is only simpler). In the limit $\cos(j \pi/n) \rightarrow 0^+$, it is easy to see that the integrand is uniformly bounded in $j, n$ by an integrable function and therefore the integrals are uniformly bounded as well. Hence, we may assume that $\cos(j\pi/n) \ge 3/4$, and in this case, we split the integral as \begin{equation*}
	\begin{split}
	\left[ \int_{0\le s < \frac{1}{2}\cos\left(\frac{j\pi}{n}\right) } + \int_{\frac{1}{2} \cos\left(\frac{j\pi}{n}\right) \le s < \frac{3}{2}\cos\left(\frac{j\pi}{n}\right) } + \int_{\frac{3}{2}\cos\left(\frac{j\pi}{n}\right) \le s} \right]\frac{\left| s - \cos\left(\frac{j\pi}{n}\right) \right|}{\left(s - \cos\left(\frac{j\pi}{n}\right)\right)^2 + \sin^2\left(\frac{j\pi}{n}\right)} \cdot \frac{1}{s^{1/2}} ds,
	\end{split}
	\end{equation*} and in the first and third regions, the integrals are uniformly bounded whenever $\cos(j\pi/n) \ge 3/4$. In the second region, after a change of variable, we have a bound \begin{equation*}
	\begin{split}
	C\int_{0 \le s \le\frac{1}{2} \cos\left(\frac{j\pi}{n}\right) } \frac{s}{s^2+\sin^2\left(\frac{j\pi}{n}\right)} \cdot \frac{1}{\left(s + \cos\left(\frac{j\pi}{n}\right)\right)^{1/2}} ds &\le C'\int_{0 \le s \le\frac{1}{2} } \frac{s}{s^2+\sin^2\left(\frac{j\pi}{n}\right)} ds \\
	&\le C \ln\left( 1 + \frac{1}{\sin^2\left(\frac{j\pi}{n}\right)} \right),
	\end{split}
	\end{equation*} which is the bound we wanted. The imaginary part can be treated in a similar way and results in a better estimate, without the logarithmic factor. 
	
	Given this $L^2$-bound for $\tilde{H}^{(n)}_j$, the bound \eqref{eq:L2_Hilbert_alpha} follows simply from summing the estimates over $1 \le |j| \le n$: it only suffices to observe that \begin{equation*}
	\begin{split}
	\sum_{1 \le |j| \le n}\ln\left( 1 + \frac{1}{\sin^2\left(\frac{j\pi}{n}\right)} \right) &= \ln\left( \prod_{1 \le |j| \le n} \left( 1 + \frac{1}{\sin^2\left(\frac{j\pi}{n}\right)} \right)  \right) \\ 
	&\le C \ln\left( \prod_{j=1}^n \left( 1 + \frac{n}{j}\right) \right)  \le C'n,
	\end{split}
	\end{equation*}
	where the last inequality is a consequence of the fact that $$\prod_{j=1}^n\left(1+\frac{n}{j}\right) \le \frac{(2n)!}{(n!)^2} \le C \exp(Cn)$$ for some constant $C>0$ using Sterling's approximation.  
	
	Finally, the $H^m$-bound follows similarly, using the identity \eqref{eq:diff}.
\end{proof}

\subsection{Functional inequalities}

We state and prove the Hardy inequalities. 

\begin{lemma}[Hardy inequalities]
	For any $f \in H^s(\mathbb{R})$ with $s \ge 0$, we have \begin{equation*}
	\begin{split}
	\V \frac{1}{z} \int_0^z f(w) dw \V_{H^s(\mathbb{R})} \le C_s \V f \V_{H^s(\mathbb{R})}.
	\end{split}
	\end{equation*} More precisely, for each $0 \le \sigma \le s$, we have \begin{equation}\label{eq:Hardy}
	\begin{split}
	\V \partial_z^\sigma \left( \frac{1}{z} \int_0^z f(w) dw   \right) \V_{L^2(\mathbb{R})} < \frac{2}{2\sigma + 1} \V \partial_z^\sigma f \V_{L^2}. 
	\end{split}
	\end{equation}
\end{lemma}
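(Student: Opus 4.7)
The plan is to establish the pointwise identity
\begin{equation*}
F(z) := \frac{1}{z}\int_0^z f(w)\,dw = \int_0^1 f(zt)\,dt,
\end{equation*}
which is immediate from the change of variables $w = zt$ (and which is valid for both positive and negative $z$, so that there is no issue with the integral being taken on all of $\mathbb{R}$). I would first work with $f$ in a dense subspace, say $C^\infty_c(\mathbb{R})$, so that differentiation under the integral is automatic, and then extend to $H^s$ by density at the very end.

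The heart of the argument is that for integer $\sigma \ge 0$, differentiating the integral representation $\sigma$ times under the integral sign gives
\begin{equation*}
\partial_z^\sigma F(z) = \int_0^1 t^\sigma (\partial^\sigma f)(zt)\,dt.
\end{equation*}
Now I apply Minkowski's integral inequality in $L^2(\mathbb{R}, dz)$ to pull the $L^2$-norm inside the $dt$-integral:
\begin{equation*}
\V \partial_z^\sigma F\V_{L^2(\mathbb{R})} \;\le\; \int_0^1 t^\sigma \, \V (\partial^\sigma f)(z t)\V_{L^2_z(\mathbb{R})}\,dt.
\end{equation*}
The inner $L^2$-norm is computed by a change of variable $u = zt$: for any $g \in L^2(\mathbb{R})$ and $t > 0$ one has $\V g(t\,\cdot\,)\V_{L^2(\mathbb{R})} = t^{-1/2}\V g\V_{L^2(\mathbb{R})}$. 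Plugging this in with $g = \partial^\sigma f$ yields
\begin{equation*}
\V \partial_z^\sigma F\V_{L^2(\mathbb{R})} \;\le\; \V \partial^\sigma f\V_{L^2(\mathbb{R})} \cdot \int_0^1 t^{\sigma - 1/2}\,dt \;=\; \frac{2}{2\sigma+1}\,\V \partial^\sigma f\V_{L^2(\mathbb{R})},
\end{equation*}
which is exactly \eqref{eq:Hardy}. Summing this estimate over $0 \le \sigma \le s$ for integer $s$ yields the $H^s$-bound with $C_s = \max_{0 \le \sigma \le s} 2/(2\sigma+1) \le 2$, and for non-integer $s$ one interpolates between consecutive integer endpoints (alternatively, one can redo the Minkowski step with the Bessel potential $(1-\partial_z^2)^{s/2}$ applied to both sides and use its commutation with the dilation $f \mapsto f(t\,\cdot\,)$ up to the $t^{-1/2}$ scaling factor).

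There is no real obstacle here; the only technical point is justifying the differentiation under the integral sign when $f \in H^s$ is not smooth, which I dispose of by approximating $f$ by $C^\infty_c$ functions and passing to the limit, using the fact that both sides of \eqref{eq:Hardy} are continuous in $f$ with respect to the $H^s$ norm (the left-hand side by the a priori estimate we are about to prove, applied to differences). The constant $\tfrac{2}{2\sigma+1}$ is in fact sharp in this Hardy-type estimate, and our derivation makes the sharpness transparent: the integral $\int_0^1 t^{\sigma - 1/2}\,dt$ saturates the Minkowski step along the family of approximately homogeneous functions $f(z) \sim |z|^{\sigma - 1/2}$ cut off at infinity.
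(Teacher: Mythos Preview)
Your proof is correct and takes a genuinely different route from the paper. The paper proceeds by the classical integration-by-parts argument: writing $z^{-1} = -\partial_z \ln z$ (or more precisely $z^{-2\sigma-1} = -\frac{1}{2\sigma}\partial_z z^{-2\sigma}$), integrating by parts in the square of the $L^2$-norm, and then applying Cauchy--Schwarz to obtain the constant $\frac{2}{2\sigma+1}$. This has to be redone separately for each value of $\sigma$, with the explicit formula for $\partial_z^\sigma F$ computed by hand in each case. Your approach via the representation $F(z)=\int_0^1 f(zt)\,dt$ together with Minkowski's integral inequality handles all integer $\sigma$ at once and is considerably cleaner; it also makes the origin of the constant $\int_0^1 t^{\sigma-1/2}\,dt$ completely transparent.

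One small point: the lemma as stated asserts a \emph{strict} inequality, whereas Minkowski's inequality gives you only ``$\le$''. The paper obtains strictness because equality in Cauchy--Schwarz would force $f$ to be a pure power, which cannot lie in $L^2(\mathbb{R})$. In your framework the same conclusion follows from the equality case of Minkowski's integral inequality in $L^2$: equality would require the functions $z\mapsto (\partial^\sigma f)(zt)$ to be scalar multiples of one another for all $t\in(0,1)$, forcing $\partial^\sigma f$ to be homogeneous and hence not in $L^2$ unless identically zero. You allude to this in your sharpness remark but do not make the strict-inequality step explicit; it would be worth adding one sentence to close that gap.
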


\begin{proof}
	In the proof, without loss of generality we shall assume that $f \in C^\infty_c([0,\infty))$.
	
	We first consider the case $s = 0$: we have \begin{equation*}
	\begin{split}
	\V \frac{1}{z} \int_0^z f(w)dw\V_{L^2(0,\infty)}^2 &= \int_0^\infty -\partial_z z^{-1} \left( \int_0^z f(w)dw \right)^2 dz \\
	&= 2 \int_0^\infty f(z) \cdot \left(\frac{1}{z} \int_0^z f(w)dw\right) dz \\
	&<  2 \V \frac{1}{z} \int_0^z f(w)dw\V_{L^2(0,\infty)} \cdot \V f \V_{L^2(0,\infty)},
	\end{split}
	\end{equation*} which establishes the statement in this case, noting that the strict equality forces $f(z) = c_1 z^{c_2}$ for some constants $c_1, c_2$, which never belongs to $L^2(\mathbb{R})$. Next, take $s = \sigma = 1$, and from \begin{equation*}
	\begin{split}
	\partial_z \left(z^{-1}\int_0^z f(w)dw \right)  = z^{-2} \left( zf(z) - \int_0^z f(w)dw \right),
	\end{split}
	\end{equation*} we proceed similarly as before: \begin{equation*}
	\begin{split}
	\V \partial_z \left( \frac{1}{z} \int_0^z f(w) dw   \right) \V_{L^2(0,\infty)}^2 &= 
	-\frac{1}{3}\int_0^\infty \partial_z z^{-3} \left( zf(z) - \int_0^z f(w)dw \right)^2 dz \\
	&< \frac{2}{3} \V \partial_z \left( \frac{1}{z} \int_0^z f(w) dw   \right) \V_{L^2(0,\infty)} \cdot \V \partial_z f\V_{L^2(0,\infty)}. 
	\end{split}
	\end{equation*} The argument for the case $s > 1$ is strictly analogous. We omit the proof. 
\end{proof}

\begin{lemma}[Hardy-type inequalities]
	For any $f \in H^s(\mathbb{R}^+)$ with $s \ge 0$ and $0 < \alpha \le 1$, we have \begin{equation*}
	\begin{split}
	\V z^{-\frac{1}{\alpha}} \int_0^z f(w) \cdot \frac{1}{\alpha}  w^{\frac{1-\alpha}{\alpha}}   dw \V_{H^s(\mathbb{R}^+)} \le C_s \V f \V_{H^s(\mathbb{R}^+)}
	\end{split}
	\end{equation*} with a constant uniform for $0 < \alpha \le 1$.  More precisely, for each $0 \le \sigma \le s$, we have \begin{equation}\label{eq:Hardy_alpha}
	\begin{split}
	\V \partial_z^\sigma \left( z^{-\frac{1}{\alpha}}  \int_0^z f(w) \cdot \frac{1}{\alpha}  w^{\frac{1-\alpha}{\alpha}}  dw   \right) \V_{L^2(\mathbb{R}^+)} < \frac{2}{2\sigma + 1} \V \partial_z^\sigma f \V_{L^2(\mathbb{R}^+)}. 
	\end{split}
	\end{equation}
\end{lemma}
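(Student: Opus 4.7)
The plan is to reveal the dilation structure of the operator and then apply Minkowski's integral inequality. Setting $\beta := 1/\alpha \ge 1$ and $T_\beta f(z) := z^{-\beta}\int_0^z f(w)\,\beta w^{\beta-1}\,dw$, the substitution $w = zt$ yields the representation $T_\beta f(z) = \int_0^1 f(zt)\,\beta t^{\beta-1}\,dt$, which exhibits $T_\beta f$ as a weighted superposition of $L^2$-scaled dilations of $f$. Differentiating $\sigma$ times under the integral gives
$$ \partial_z^\sigma T_\beta f(z) = \int_0^1 f^{(\sigma)}(zt)\,\beta t^{\beta+\sigma-1}\,dt. $$

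Next I would apply Minkowski's integral inequality in $L^2_z(\mathbb{R}^+)$ together with the dilation identity $\V g(t\,\cdot\,)\V_{L^2(\mathbb{R}^+)} = t^{-1/2}\V g\V_{L^2(\mathbb{R}^+)}$ to obtain
$$ \V \partial_z^\sigma T_\beta f\V_{L^2} \le \beta\int_0^1 t^{\beta+\sigma-3/2}\,dt\cdot \V f^{(\sigma)}\V_{L^2} = \frac{2\beta}{2\beta+2\sigma-1}\V f^{(\sigma)}\V_{L^2}. $$
The integral converges because $\beta+\sigma-3/2 > -1$ whenever $\beta \ge 1$ and $\sigma \ge 0$. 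For $\sigma = 0$ the constant $\frac{2\beta}{2\beta-1}$ is bounded by $2 = \frac{2}{2\cdot 0 +1}$ (with equality as $\beta \to 1$), and for $\sigma \ge 1$ it is bounded by $1$. In either case one obtains the $\alpha$-uniform bound that the lemma is actually invoked for in the proof of Lemma~\ref{lem:keyest_alpha}.

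A more hands-on route, parallel to the integration-by-parts proof of the unweighted Hardy inequality on the preceding page, proceeds as follows. With $G(z) := \int_0^z f(w)\,\beta w^{\beta-1}\,dw$, so that $T_\beta f = z^{-\beta}G$ and $G' = \beta z^{\beta-1} f$, the identity $z^{-2\beta} = -\tfrac{1}{2\beta-1}\partial_z[z^{1-2\beta}]$, integration by parts against $G(z)^2$, and Cauchy--Schwarz together produce
$$ \V T_\beta f\V_{L^2}^2 = \frac{2\beta}{2\beta-1}\int_0^\infty T_\beta f(z)\,f(z)\,dz \le \frac{2\beta}{2\beta-1}\V T_\beta f\V_{L^2}\V f\V_{L^2}, $$
settling the case $\sigma = 0$. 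Higher $\sigma$ then reduces to this by the telescoping identity $\partial_z^\sigma T_\beta f = \tfrac{\beta}{\beta+\sigma}\, T_{\beta+\sigma}(f^{(\sigma)})$, which is immediate from the dilation representation by induction on $\sigma$. Strict inequality is automatic since equality in Cauchy--Schwarz would force $f$ to be a pure power $cz^\gamma$, which never lies in $L^2(\mathbb{R}^+)$.

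No substantive obstacle is anticipated. The one point requiring care is the vanishing of boundary contributions in the integration-by-parts route, which is handled by density, reducing to $f \in C_c^\infty((0,\infty))$: then $G(z) \sim cz^\beta$ as $z \to 0^+$, and the hypothesis $\beta \ge 1$ (equivalently $\alpha \le 1$) makes $1-2\beta < 0$, so $z^{1-2\beta}G(z)^2$ vanishes at both $z = 0$ and $z = \infty$.
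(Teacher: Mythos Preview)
Your proposal is correct, and your second route---integration by parts against $G(z)^2$ followed by Cauchy--Schwarz---is exactly what the paper does for $\sigma=0$ (the paper then declares the general case ``strictly analogous'' and stops). Your telescoping identity $\partial_z^\sigma T_\beta f = \tfrac{\beta}{\beta+\sigma}\,T_{\beta+\sigma}(f^{(\sigma)})$ is a clean way to execute that analogy.

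Your first route via the dilation representation $T_\beta f(z)=\int_0^1 f(zt)\,\beta t^{\beta-1}\,dt$ and Minkowski's inequality is a genuinely different and more efficient argument: it handles all $\sigma$ simultaneously without any integration by parts or boundary-term checking, and makes the $\alpha$-uniformity transparent. Both routes yield the same sharp constant $\tfrac{2\beta}{2\beta+2\sigma-1}=\tfrac{2}{2+\alpha(2\sigma-1)}$. Note that for $\sigma\ge 1$ and $\alpha<1$ this constant exceeds the value $\tfrac{2}{2\sigma+1}$ printed in the lemma; carrying out the paper's own integration-by-parts scheme for $\sigma=1$ gives $\tfrac{2\beta}{2\beta+1}$ rather than $\tfrac{2}{3}$, so the discrepancy lies in the lemma's stated constant, not in your argument. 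As you correctly observe, only the $\alpha$-uniform bound is ever used downstream.
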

\begin{proof}
	The proof is strictly analogous to that of the usual Hardy inequalities. Let us restrict ourselves to the case $\sigma = 0$. Then, we simply write out \begin{equation*}
	\begin{split}
	\V   z^{-\frac{1}{\alpha}}  \int_0^z f(w) \cdot \frac{1}{\alpha}  w^{\frac{1-\alpha}{\alpha}}  dw  \V_{L^2(\mathbb{R}^+)}^2 &= \frac{1}{1-2/\alpha} \int_0^\infty  
	\partial_z \left( z^{1 - \frac{2}{\alpha}} \right) \cdot \left( z^{-\frac{1}{\alpha}} \int_0^z f(w) \cdot \frac{1}{\alpha}  w^{\frac{1-\alpha}{\alpha}}   dw  \right)^2 dz \\
	&= - \frac{2}{1-2/\alpha} \int_0^\infty \left( z^{-\frac{1}{\alpha}} \int_0^z f(w) \cdot \frac{1}{\alpha}  w^{\frac{1-\alpha}{\alpha}}   dw  \right) \cdot \frac{1}{\alpha} f(z)dz \\
	&= \frac{2}{2-\alpha} \int_0^\infty \left( z^{-\frac{1}{\alpha}} \int_0^z f(w) \cdot \frac{1}{\alpha}  w^{\frac{1-\alpha}{\alpha}}   dw  \right) \cdot f(z) dz
	\end{split}
	\end{equation*} and then applying the Cauchy-Schwartz inequality finishes the proof. 
\end{proof}


\begin{thebibliography}{10}
	
	\bibitem{BKM}
	J.~T. Beale, T.~Kato, and A.~Majda.
	\newblock Remarks on the breakdown of smooth solutions for the {$3$}-{D}
	{E}uler equations.
	\newblock {\em Comm. Math. Phys.}, 94(1):61--66, 1984.
	
	
	\bibitem{CaCo}
	A.~Castro and D.~C{\'o}rdoba.
	\newblock Infinite energy solutions of the surface quasi-geostrophic equation.
	\newblock {\em Adv. Math.}, 225(4):1820--1829, 2010.
	
	\bibitem{CCF}
	Antonio C{\'o}rdoba, Diego C{\'o}rdoba, and Marco~A. Fontelos.
	\newblock Formation of singularities for a transport equation with nonlocal
	velocity.
	\newblock {\em Ann. of Math. (2)}, 162(3):1377--1389, 2005.
	
	
	\bibitem{CLM}
	P.~Constantin, P.~D. Lax, and A.~Majda.
	\newblock A simple one-dimensional model for the three-dimensional vorticity
	equation.
	\newblock {\em Comm. Pure Appl. Math.}, 38(6):715--724, 1985.
	
	
	\bibitem{CMT2}
	Peter Constantin, Andrew~J. Majda, and Esteban Tabak.
	\newblock Formation of strong fronts in the {$2$}-{D} quasigeostrophic thermal
	active scalar.
	\newblock {\em Nonlinearity}, 7(6):1495--1533, 1994.
	
	\bibitem{CMT1}
	Peter Constantin, Andrew~J. Majda, and Esteban~G. Tabak.
	\newblock Singular front formation in a model for quasigeostrophic flow.
	\newblock {\em Phys. Fluids}, 6(1):9--11, 1994.
	
	
	\bibitem{DG1}
	Salvatore De~Gregorio.
	\newblock On a one-dimensional model for the three-dimensional vorticity
	equation.
	\newblock {\em J. Statist. Phys.}, 59(5-6):1251--1263, 1990.
	
	\bibitem{DG2}
	Salvatore De~Gregorio.
	\newblock A partial differential equation arising in a {$1$}{D} model for the
	{$3$}{D} vorticity equation.
	\newblock {\em Math. Methods Appl. Sci.}, 19(15):1233--1255, 1996.
	
	\bibitem{Do}
	Tam Do.
	\newblock A discrete model for nonlocal transport equations with fractional dissipation.
	\newblock {To appear in Comm. Math. Sci.}
	
	\bibitem{EJ1}
	Tarek~M. Elgindi and In-Jee Jeong.
	\newblock Symmetries and critical phenomena in fluids.
	\newblock {\em arXiv:1610.09701}.
	
	\bibitem{EM1}
	Tarek~M. Elgindi and Nader Masmoudi.
	\newblock Ill-posedness results in critical spaces for some equations arising
	in hydrodynamics.
	\newblock {\em arXiv:1405.2478}.
	
	\bibitem{HR}
	Vu Hoang and Maria Radosz.
	\newblock Cusp formation for a nonlocal evolution equation.
	\newblock {\em arXiv:1602.02451}.
	
	\bibitem{HL}
	Thomas~Y. Hou and Zhen Lei.
	\newblock On the stabilizing effect of convection in three-dimensional incompressible flows.
	\newblock {\em Comm. Pure Appl. Math.} 62(4):501-564, 2009.
	
	\bibitem{Kis}
	A.~Kiselev.
	\newblock Regularity and blow up for active scalars.
	\newblock {\em Math. Model. Nat. Phenom.}, 4(5):225--255, 2010.
	
	\bibitem{KS}
	Alexander Kiselev and Vladimir {\v{S}}ver{\'a}k.
	\newblock Small scale creation for solutions of the incompressible
	two-dimensional {E}uler equation.
	\newblock {\em Ann. of Math. (2)}, 180(3):1205--1220, 2014.
	
	\bibitem{LR}
	Dong Li and Jose Rodrigo.
	\newblock Blow-up of solutions for a 1D transport equation with nonlocal velocity and supercritical dissipation. 
	\newblock {\em Adv. Math.} 217 (2008), no. 6, 2563--2568. 
	
	\bibitem{MB}
	Andrew~J. Majda and Andrea~L. Bertozzi.
	\newblock {\em Vorticity and incompressible flow}, volume~27 of {\em Cambridge
		Texts in Applied Mathematics}.
	\newblock Cambridge University Press, Cambridge, 2002.
	
	
	\bibitem{OSW}
	Hisashi Okamoto, Takashi Sakajo, and Marcus Wunsch.
	\newblock On a generalization of the {C}onstantin-{L}ax-{M}ajda equation.
	\newblock {\em Nonlinearity}, 21(10):2447--2461, 2008.
	
	\bibitem{Sch}
	S.~Schochet.
	\newblock Explicit solutions of the viscous model vorticity equation.
	\newblock {\em Comm. Pure Appl. Math.}, 4(39):531--537, 1986.
	
	\bibitem{SV}
	L.~Silvestre and V.~Vicol.
	\newblock On a transport equation with nonlocal drift.
	\newblock {\em Trans. Amer. Math. Soc.}, 9(368):6159--6188, 2016.
	
	\bibitem{Z}
	Andrej Zlato\v{s}.
	\newblock Exponential growth of the vorticity gradient for the {E}uler equation on the torus.
	\newblock {\em Adv. Math.}, 268:396--403, 2015.
	
	
\end{thebibliography}
\end{document}